\def\dist{\operatorname{dist}}
\def\fin{\text{fin}}
\def\osc{\operatorname{osc}}
\def\A{\operatorname{\mathscr A}}
\newtheorem{thm}{Theorem}[section]
\newtheorem{defi}[thm]{Definition}
\newtheorem{lem}[thm]{Lemma}
\newtheorem{cor}[thm]{Corollary}
\newtheorem{prop}[thm]{Proposition}
\newtheorem{fact}[thm]{Fact}
\theoremstyle{remark}
\newtheorem{remark}[thm]{Remark}
\newtheorem{example}[thm]{Example}
\begin{document}

\title{Splitting chains, tunnels and twisted sums}

\author[Cabello-S\'anchez]{F\'{e}lix Cabello S\'anchez}
\address{Departamento de Matem\'aticas and IMUEx, Universidad de Extremadura, 06071 Badajoz (Spain)} 
\email{fcabello@unex.es}

\author[Avil\'es]{Antonio Avil\'{e}s}
\address{Departamento de Matem\'aticas, Universidad de Murcia, 30100 Murcia (Spain)} 
\email{avileslo@um.es}

\author[Borodulin-Nadzieja]{Piotr Borodulin-Nadzieja}
\address{Instytut Matematyczny, Uniwersytet Wroc\l awski, Wroc\l aw (Poland)} 
\email{pborod@math.uni.wroc.pl}

\author[Chodounsk\'{y}]{\\David Chodounsk\'{y}}
\address{Institute of Mathematics of the Czech Academy of Sciences, Praha 1 (Czech Republic)} 
\email{david.chodounsky@matfyz.cz}

\author[Guzm\'{a}n]{Osvaldo Guzm\'{a}n}
\address{University of Toronto, Toronto (Canada)}
\email{oguzman@math.utoronto.ca}

\thanks{AA was supported by projects MTM2017-86182-P (AEI, Government of Spain and ERDF, EU) and 20797/PI/18 by Fundaci\'{o}n S\'{e}neca, ACyT Regi\'{o}n de Murcia. PBN was supported by Polish National Science Center grant no 2018/29/B/ST1/00223. He is also
	indebted to Universidad de Murcia for financing his stay in Murcia, during which AA introduced him to the concept of splitting chains.
FCS was supported in part by DGICYT project MTM2016-76958-C2-1-P (Spain) and
Junta de Extremadura program IB-16056.
DCh was supported by the GACR project 17-33849L and RVO:
67985840. OG was supported by NSERC grant number 455916.}

\date{July 22, 2019}

\subjclass[2010]{03E17,03E75,46B25,46E15,54A35}
\keywords{twisted sums of Banach spaces, short exact sequences of Banach spaces, splitting families, tight gaps, complete tunnels, Aronszajn trees, Cohen forcing}

\begin{abstract}  
	We study splitting chains in $\mathscr{P}(\omega)$, that is, families of subsets of $\omega$ which are linearly ordered by $\subseteq^*$ and which are splitting. We prove that their existence is independent of axioms of $\mathsf{ZFC}$. We show that
they can be used to construct certain peculiar Banach spaces: twisted sums of $C(\omega^*)=\ell_\infty/c_0$ and $c_0(\mathfrak c)$. Also, we consider splitting chains in a topological setting, where they give rise to the so called tunnels.  \end{abstract}

\maketitle

\section{Introduction}
We say that a compact space $K$ has a tunnel, if there is a continuous mapping $f\colon K\longrightarrow L$, where $L$ is a linearly ordered topological space, such that $f^{-1}(x)$ is nowhere dense in $K$ for each $x\in L$. This notion was introduced by
Nyikos in \cite{Nyikos} (under the name of \emph{a complete tunnel}). Although it may seem like the spaces with tunnels should resemble in a sense linearly ordered topological spaces, in fact the property of possessing a tunnel is quite widespread among compact spaces without isolated points.
Actually, it is not easy to find a compact space without isolated points which does not have a tunnel. 

In this article we are going to study the notion of tunnel and some of its variations in the context of infinitary combinatorics, topology and homological Banach space theory. 

We will be mostly interested in the question if $\omega^*$ (that is $\beta\omega\setminus \omega$, the remainder of the \v Cech-Stone compactification of $\omega$) has a tunnel.  This question is interesting in all the settings mentioned above. In
particular, it is connected to the existence of certain peculiar family of subsets of $\omega$. A family $\mathscr{S}$ of subsets of the set $\omega$ of natural numbers is called splitting if for every infinite set $A\subseteq \omega$ there exists $S\in \mathscr{S}$ such that both $A\cap S$ and $A\setminus S$ are infinite. Splitting families are well studied
objects in set theory, specially in connection with the important cardinal $\mathfrak{s}$, the least cardinality of such a family. In this paper we are interested in splitting families which are moreover chains in the almost inclusion order, that is,
with the extra property that if $A,B\in\mathscr{S}$, then either $A\setminus B$ or $B\setminus A$ is finite.
Nyikos proved several results about existence of splitting chains in various models of set theory: he showed e.g. that $\mathsf{PFA}$ implies that there are no splitting chains whereas. In this
article we partially follow Nyikos' path (although most of the results we proved before we have discovered Nyikos' work), reproving some of his theorems and
proving that splitting chains do exist in the standard Cohen model (Nyikos announced that the proof of that result would appear in a later paper that, however, was never published). Also, we show that the existence of a splitting chain is compatible with the assumption that $\mathfrak{p}>\omega_1$.

%He proved that splitting families exist under CH, and he claimed~\cite[Theorem 2.6]{Nyikos} that they exist as well in models obtained by adding less than $\aleph_\omega$ Cohen reals to a model of CH. He announced that the proof of that result would appear in a later paper that, however, was never published. In Section~\ref{forcing} we give a proof of this and we also show that the existence of splitting chains is consistent with $\mathfrak{p}=\omega_2$.

Our initial motivation for the study of tunnels and splitting chains
stems from their uses in the construction and analysis of certain ``twisted sums'' of Banach spaces.
Let us recall that a short exact sequence of Banach spaces is a  diagram of Banach spaces and (linear, continuous) operators
\begin{equation}\label{sex}
\begin{CD}
0@>>> Y @>\imath>> Z @>\pi>> X@>>> 0
\end{CD}
\end{equation}
in which the kernel of each arrow agrees with the range of the preceding one.
The middle space $Z$ is often called a ``twisted sum'' of $Y$ and $X$ ... in that order!
One says that such an exact sequence is trivial
(or that it splits, but we prefer to avoid this terminology in this paper) if the mapping $\imath$ admits an inverse (see Section \ref{derivation2} for the precise definition) in which case the twisted sum space $Z$ is ``well isomorphic'' to the direct sum $Y\oplus X$.

% If the sequence (\ref{sex}) does not split, we call it \emph{a twisted sum of $X$ and $Y$}.

Questions about whether a Banach space $Z$ contains a copy of some classical Banach space $Y$ (or it does not) are central in Banach space theory (recall, e.g., the celebrated Rosenthal's $\ell_1$ theorem \cite{Rosenthal74}, or Bessaga-Pe\l czy\'nski theorem characterizing Banach spaces containing a copy of $c_0$, \cite{Bessaga58}).

We may ask about how a subspace $Y$ is situated in $Z$ considering also the quotient space $X=Z/Y$ and the exact sequence
\begin{equation}\label{yzz/y}
\begin{CD}
0@>>> Y @>\imath>> Z @>\pi>> Z/Y@>>> 0
\end{CD}
\end{equation}
where $\imath$ is the inclusion and $\pi$ is the natural quotient map.
In this context, $Y$ is complemented in $Z$ if and only if (\ref{yzz/y}) is trivial; if the sequence is not trivial, then $Y$ lies in $Z$ in a non-trivial way and $Z$ is kind of an
\emph{alloy} of $Y$ and $X$, but not a \emph{trivial} one. Determining for which Banach spaces $Y$ and $X$ one can construct a nontrivial sequence (\ref{sex}) is a fundamental question in the homological theory of Banach spaces (see the monograph \cite{castillo} for a general account; the approaches of the more recent papers \cite{avilesAiM}, \cite{Felix03}, \cite{Plebanek18} are more akin to ours). 

As we shall see, each tunnel of $K$ induces an exact sequence of the form
\begin{equation*}\begin{CD}
0@>>> C(K) @>>> Z @>>> c_0(\kappa)@>>> 0
\end{CD} 
\end{equation*} which is nontrivial if the tunnel has some additional properties, for instance, when it is made of regular open sets. Here, and throughout the paper, $C(K)$ denotes the Banach space of all continuous functions $f\colon
K\longrightarrow\mathbb R$ with the sup norm. Also, if $I$ is a set, then $c_0(I)$ denotes the space of all functions $f\colon I\longrightarrow\mathbb R$ such that, for every $\varepsilon>0$, the set $\{i\in I\colon |f(i)|> 0\}$ is finite, again with the sup norm.

The most interesting case is, by far, when $K = \omega^\ast$ is the \v{C}ech-Stone remainder of the natural numbers. Our main result in this line is that if there is a splitting chain of clopens in $\omega^\ast$, then  a nontrivial exact sequence
\begin{equation}\label{c01}
\begin{CD}
0@>>> C(\omega^\ast) @>>> Z @>>> c_0(\mathfrak{c}) @>>> 0.
\end{CD}
\end{equation}		 
%\end{thm}
exists; see Theorem \ref{the-main}.
Twisted sums of the form
\begin{equation}\label{cwstarY}\begin{CD}
0@>>> C(\omega^\ast) @>>> Z @>>> X @>>> 0
\end{CD}\end{equation}
were constructed first by Amir~\cite{amir} and later by Avil\'{e}s and Todorcevic~\cite{multigaps}, but not much is specified in these constructions about the structure of $X$. Amir's construction is described in \cite[Section 2.5, Proposition 2.43]{aviles}. More recently, the authors of \cite{corr} constructed a twisted sum like
(\ref{c01}) under $\mathsf{CH}$. As we have mentioned above, we are able to prove the existence of splitting chains of clopens in $\omega^\ast$ under several other assumptions, and so by Theorem \ref{the-main} we get new examples of twisted sums of the form
(\ref{c01}).

%Also, we have a weaker more technical condition on a family of subsets of $\omega$, rather than being a splitting chain, which is sufficient for Theorem~\ref{theoremc0}. It asks for a certain derivation procedure not to stop after finitely many steps. But we do not know if this would give a more general axiomatic setting. 

The paper is organized as follows:
In Section~\ref{sectiongeneral} we study the notion of tunnels and splitting chains in the more general context of topological spaces. Section~\ref{derivation2} relates tunnels and splitting chains with twisted sums of Banach spaces, and finally in Section~\ref{forcing} we prove the consistency results concerning the existence of splitting chains in $\mathscr{P}(\omega)/\fin$.

%\section{Preliminaries}\label{preliminaries}

\section{Tunnels and splitting chains of open sets} \label{sectiongeneral}

We assume that all topological spaces are Hausdorff. Let $U$, $V$ be open in a topological space $X$. By $U < V$ we will denote the relation $\overline{U} \subseteq V$. 
The relation ``$\leq$'' defined by \[ U\leq V \mbox{     if     } (U<V \mbox{ or } U=V)\] is a partial order on the topology of $X$.

\begin{defi} We say that a family of open subsets of $X$ is a \emph{chain (of open sets)} if it is linearly ordered by $\leq$.
\end{defi}

\begin{defi} A chain of open subsets $\mathscr{U}$ of $X$ is a \emph{tunnel} if the set $\bigcup \{\partial U\colon U\in \mathscr{U}\}$ is dense in $X$.
\end{defi}

Clearly, no space with an isolated point can have a tunnel. However, the property of having a tunnel is quite common among spaces without isolated points. We will begin with some easy examples.

\begin{example} The family of all open balls with fixed center forms a tunnel in Euclidean spaces. More generally if $(X,d)$ is a metric space with a point $x_0\in X$ such that for each $r>0$
	\[ \overline{B(x_0,r)} = \{x\in X\colon d(x,x_0)\leq r\} \]
	then the family 
	$\{B(x_0,r)\colon r>0\}$ is a tunnel. Consequently, normed spaces have tunnels. 	\medskip
\end{example}

Less obvious examples of spaces with tunnels are given by the following proposition.

\begin{prop}\label{sep}
	If $X$ is a separable normal space without isolated points, then $X$ has a countable tunnel.
\end{prop}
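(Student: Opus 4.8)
The plan is to build the tunnel as a countable chain of open sets whose boundaries together are dense, exploiting separability to index everything by $\omega$. Let $\{x_n : n\in\omega\}$ be a countable dense subset of $X$, and let $\{V_n : n \in \omega\}$ enumerate a countable base (if one wants to work with a base; separable metric would give this for free, but for separable normal one should instead enumerate the dense set and use it as the target of density). I would recursively construct open sets $U_0 \subsetneq U_1 \subsetneq \cdots$ with $\overline{U_n} \subseteq U_{n+1}$, so that $\mathscr{U} = \{U_n : n\in\omega\}$ is automatically a chain for $\leq$, and arrange that $\bigcup_n \partial U_n$ meets every nonempty open set. The device for the density requirement is a bookkeeping argument: at stage $n$ we look at the basic open set $V_n$ (or at $x_n$), and we ensure that some boundary $\partial U_m$ with $m \geq n$ hits $V_n$.

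The key step is the one-step extension. Given $U_n$ with $\overline{U_n}\subseteq X$ and a target nonempty open set $W$ that we want a future boundary to meet, I distinguish two cases. If $W \subseteq \overline{U_n}$ already, then since $X$ has no isolated points and $W$ is open, $W$ must meet $\partial U_m$ for some earlier $m \le n$ (because otherwise $W$ would be contained in the interior of $U_m$ for all $m$, and tracking this back one can show $W \subseteq U_0$, giving a contradiction if $U_0$ is chosen small enough, e.g. with $\overline{U_0}$ a proper subset whose complement contains a point near which we work) — so in that case the requirement for $W$ is met and we just need the generic extension $\overline{U_n}\subseteq U_{n+1}\subsetneq X$, which normality provides as long as $X \setminus \overline{U_n}$ is nonempty. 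If instead $W \not\subseteq \overline{U_n}$, pick a point $p \in W \setminus \overline{U_n}$; using normality, separate the disjoint closed sets $\overline{U_n}$ and $\{p\}$ by disjoint open sets, and then, using that $p$ is not isolated and a further application of normality (Urysohn), carve out an open set $U_{n+1}$ with $\overline{U_n}\subseteq U_{n+1}$, $p\notin \overline{U_{n+1}}$, but with $\partial U_{n+1}$ forced to pass through $W$: concretely, take a continuous $f\colon X\to[0,1]$ with $f\equiv 0$ on $\overline{U_n}$ and $f(p)=1$, let $q$ be another point of $W$ distinct from $p$ (exists since $p$ is not isolated), and set $U_{n+1} = \{f < c\}$ where $c$ is chosen strictly between $\min(f(p),f(q))$-ish values so that $W$ contains points on both sides of $\{f = c\}$; then $\partial U_{n+1} \cap W \ne \emptyset$. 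One must also keep $X\setminus \overline{U_{n+1}} \ne \emptyset$ so the recursion can continue — the point $p$ serves that purpose.

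At the end, $\mathscr{U} = \{U_n\}$ is a chain of open sets, and by the bookkeeping every nonempty basic open set $V_n$ contains a point of some $\partial U_m$, hence $\bigcup_{U\in\mathscr U}\partial U$ is dense; thus $\mathscr{U}$ is a countable tunnel.

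The main obstacle I anticipate is the case analysis in the one-step extension, specifically making the "already absorbed" case rigorous: when $W \subseteq \overline{U_n}$, one needs to genuinely locate a boundary point of some $U_m$ inside $W$ rather than hand-wave it, and this is where no-isolated-points plus the nesting $\overline{U_m} \subseteq U_{m+1}$ must be combined carefully — the cleanest route is probably to set up the recursion so that this bad case simply cannot occur, e.g. by always choosing $U_{n+1}$ with nonempty boundary inside the currently-targeted open set and interleaving the targets so that each basic open set is handled while it is still not contained in $\overline{U_n}$; I would need to argue that any nonempty open $W$ is, at the stage it is considered, not yet contained in the closure of the current $U_n$ — which follows if at every stage $X \setminus \overline{U_n}$ is dense, a property one can propagate by choosing each $U_{n+1}$ to avoid a prescribed point of a countable dense set. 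Getting this invariant ("$X\setminus\overline{U_n}$ is dense for every $n$") to survive the recursion, while simultaneously forcing boundaries into prescribed open sets, is the delicate bookkeeping at the heart of the argument.
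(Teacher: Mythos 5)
There is a genuine gap here, and it is not the ``delicate bookkeeping'' you flag at the end — the overall plan cannot work. You propose to build a \emph{monotone} increasing chain $U_0\subsetneq U_1\subsetneq\cdots$ with $\overline{U_n}\subseteq U_{n+1}$ and have $\bigcup_n\partial U_n$ dense. That is impossible in any space, for any choice of the $U_n$. Indeed, fix any $n$ with $U_n\neq\varnothing$ and put $F_n=\bigcup_{m\le n}\partial U_m$; this is a finite union of nowhere dense closed sets, hence itself closed and nowhere dense, so $U_n\setminus F_n$ is a nonempty open set. This set is missed by \emph{every} boundary $\partial U_m$: for $m\le n$ because $\partial U_m\subseteq F_n$, and for $m>n$ because $\partial U_m\subseteq X\setminus U_m\subseteq X\setminus\overline{U_n}\subseteq X\setminus U_n$. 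So $\bigcup_m\partial U_m$ is never dense. The invariant you propose to rescue the recursion (``$X\setminus\overline{U_n}$ is dense for every $n$'') cannot help, because it is equivalent to $\operatorname{int}(\overline{U_n})=\varnothing$, and since $U_n\subseteq\operatorname{int}(\overline{U_n})$ this forces $U_n=\varnothing$. The ``already absorbed'' case you worry about is thus not a corner case to be engineered around: for an increasing $\omega$-chain it is unavoidable and fatal.

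The paper's proof sidesteps this by \emph{not} extending the chain at the top. It first proves an insertion lemma: if $V<U$ and $x\in U\setminus\overline V$ is non-isolated, then normality gives an open $W$ with $V<W<U$ and $x\in\partial W$ (take a small $W_0\ni x$ with $\overline{W_0}\subseteq U\setminus\overline V$, use normality to get $W_1$ with $V<W_1<U$ and $x\notin W_1$, and set $W=(W_0\setminus\{x\})\cup W_1$). Then, enumerating a countable dense set $\{d_n\}$, at stage $n$ one inserts a new open set \emph{between} the two current chain members that straddle $d_n$, with boundary through $d_n$. The resulting chain has a dense countable order type (like $\mathbb{Q}$) rather than order type $\omega$, and that is exactly what makes $\bigcup\partial U_n$ dense possible. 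To repair your argument you would have to abandon the increasing $\omega$-enumeration and allow insertions in the interior of the chain, at which point you are essentially doing the paper's proof.
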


\begin{proof} 
First, notice that if $V<U$ are open subsets of $X$ and $x\in U\setminus \overline{V}$, then there is an open $W$ such that $V<W<U$ and $x\in \partial W$. Indeed, use normality to find an open neighbourhood $W_0$ of $x$ such that $W_0 < U\setminus \overline{V}$. Since $x$ is not an isolated point, $x\in \partial(W_0 \setminus \{x\})$. Now, again by normality, there is $W_1$ such that $V<W_1<U$
and $x\notin W_1$. The set $W = W_0\setminus\{x\} \cup W_1$ is as desired.

Let $D=\{d_n\colon n\in\omega\}$ be a dense subset of $X$. Using the above remark it is easy to construct inductively a chain of open sets $\{U_n\colon n\in\omega\}$ such that $d_n \in \partial U_n$. Of course $\{U_n\colon n\in\omega\}$ is a
tunnel.
\end{proof}

\begin{cor}\label{cor:metriz-have}
	Every compact metrizable space without isolated points has a countable tunnel.
\end{cor}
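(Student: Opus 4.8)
The plan is to deduce this immediately from Proposition~\ref{sep}, so the only thing to do is check that a compact metrizable space without isolated points meets the three hypotheses of that proposition: being separable, being normal, and having no isolated points. The last is given by assumption, so I would focus on the first two.

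For separability I would use that a compact metric space $(X,d)$ is totally bounded: for each $n\in\omega$ the open cover $\{B(x,1/n)\colon x\in X\}$ admits a finite subcover, so there is a finite $F_n\subseteq X$ with $X=\bigcup_{x\in F_n}B(x,1/n)$. Then $D=\bigcup_{n\in\omega}F_n$ is countable and dense, since every nonempty open set contains some ball $B(y,1/n)$, which meets $F_n$. For normality I would simply invoke the standard fact that every metric space is normal (or, alternatively, that every compact Hausdorff space is normal). Once the hypotheses are in place, Proposition~\ref{sep} hands back a countable tunnel and we are done.

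I do not expect any genuine obstacle here: all the substance already lives in Proposition~\ref{sep}, and this corollary is just the observation that in the compact metrizable setting its hypotheses come for free. If one preferred to bypass even the total-boundedness argument, one could instead note that a compact metrizable space is second countable and hence separable.
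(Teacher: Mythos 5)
Your proposal is correct and matches the paper's intent exactly: the paper states this corollary with no written proof, presenting it as an immediate consequence of Proposition~\ref{sep}, and the only content is precisely the standard facts you supply (compact metrizable implies separable and normal).
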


\begin{remark} A study of tunnels in metric spaces was undertaken by Maciej Niewczas in \cite{Niewczas}. Witold Marciszewski proved that the assumption on compactness is obsolete and in fact every metrizable space without isolated points has a countable tunnel, using the fact that every metric space has a $\sigma$-discrete base. 
\end{remark}

\begin{prop} If $X$ has a tunnel and $Y$ is a topological space, then $X\times Y$ has a tunnel.
\end{prop}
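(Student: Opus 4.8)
The plan is to lift a tunnel on $X$ to the product by taking the ``cylinders'' over its members. Concretely, if $\mathscr{U}$ is a tunnel on $X$, I would consider the family $\mathscr{V}=\{U\times Y\colon U\in\mathscr{U}\}$ of open subsets of $X\times Y$ and check that it is a tunnel.

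First I would verify that $\mathscr{V}$ is a chain. The only thing to observe is that for open sets $A\subseteq X$, $B\subseteq Y$ one has $\overline{A\times B}=\overline{A}\times\overline{B}$ in the product topology, a standard fact. Hence if $U<U'$ in $\mathscr{U}$, i.e.\ $\overline{U}\subseteq U'$, then $\overline{U\times Y}=\overline{U}\times Y\subseteq U'\times Y$, so $U\times Y< U'\times Y$; and $U=U'$ gives $U\times Y=U'\times Y$. Since $\mathscr{U}$ is linearly ordered by $\leq$, so is $\mathscr{V}$.

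Next I would compute the boundaries. Because $U\times Y$ is open, $\partial(U\times Y)=\overline{U\times Y}\setminus(U\times Y)=(\overline{U}\times Y)\setminus(U\times Y)=(\overline{U}\setminus U)\times Y=\partial U\times Y$. Therefore
\[
\bigcup\{\partial(U\times Y)\colon U\in\mathscr{U}\}=\Bigl(\bigcup\{\partial U\colon U\in\mathscr{U}\}\Bigr)\times Y .
\]
Since $\bigcup\{\partial U\colon U\in\mathscr{U}\}$ is dense in $X$ (as $\mathscr{U}$ is a tunnel) and $Y$ is dense in itself, the right-hand side is dense in $X\times Y$, because a product of dense subsets is dense in the product. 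Thus $\mathscr{V}$ is a tunnel on $X\times Y$, and in fact it has the same cardinality as $\mathscr{U}$, so a countable tunnel on $X$ yields a countable tunnel on $X\times Y$.

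There is no real obstacle here: the argument is entirely formal, the only point needing (routine) care being the behaviour of closures and boundaries under products, which reduces to the identity $\overline{A\times B}=\overline{A}\times\overline{B}$.
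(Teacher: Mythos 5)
Your proposal is correct and takes exactly the same route as the paper, which simply states that $\{U\times Y\colon U\in\mathscr{U}\}$ is a tunnel and leaves the verification to the reader. You have supplied the (routine) details — the identity $\overline{A\times B}=\overline{A}\times\overline{B}$, the resulting boundary formula $\partial(U\times Y)=\partial U\times Y$, and the density of a product of dense sets — all of which are correct.
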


\begin{proof}
	Let $\mathscr{U}$ be a tunnel in $X$. It is easy to verify that $\{U\times Y\colon U\in \mathscr{U}\}$ is a tunnel in $X\times Y$.
\end{proof}
\begin{cor} The space $2^\kappa$ has a countable tunnel for each infinite $\kappa$.
\end{cor}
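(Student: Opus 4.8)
The plan is to realize $2^\kappa$ as a topological product having the Cantor set as a factor, and then invoke the two preceding results rather than argue from scratch. First I would observe that, since $\kappa$ is infinite, the cardinal $|\omega\sqcup\kappa|$ equals $\kappa$, so $2^\kappa$ is homeomorphic to $2^{\omega}\times 2^{\kappa}$. The first factor $2^\omega$ is the Cantor set, a compact metrizable space with no isolated points, so by Corollary~\ref{cor:metriz-have} it carries a \emph{countable} tunnel $\mathscr{U}$.

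Next I would apply the product proposition above to $X=2^\omega$, equipped with the countable tunnel $\mathscr{U}$, and $Y=2^\kappa$. Inspecting the proof of that proposition, the tunnel it produces on the product is exactly $\{U\times Y\colon U\in\mathscr{U}\}$, which is countable precisely because $\mathscr{U}$ is. Transporting this family back along the homeomorphism $2^\kappa\cong 2^\omega\times 2^\kappa$ yields a countable tunnel on $2^\kappa$, which is the claim.

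There is essentially no obstacle in the argument; the only point worth flagging is that one cannot apply Proposition~\ref{sep} directly, since $2^\kappa$ fails to be separable once $\kappa>\mathfrak{c}$, so the straightforward ``separable normal without isolated points'' route is unavailable for large $\kappa$. The product decomposition is exactly what sidesteps this, giving the conclusion uniformly for every infinite $\kappa$. For completeness one should also record the trivial facts that $2^\kappa$ is Hausdorff and has no isolated points (the latter because every nonempty basic open set constrains only finitely many coordinates, hence contains at least two points), so that ``tunnel'' is a meaningful notion here in the first place.
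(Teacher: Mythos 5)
Your argument is correct and is essentially the paper's: the paper likewise decomposes $2^\kappa$ as $2^\omega\times 2^{\kappa\setminus\omega}$, obtains a countable tunnel on $2^\omega$ from Proposition~\ref{sep}, and applies the product proposition. Citing Corollary~\ref{cor:metriz-have} rather than Proposition~\ref{sep} directly is an immaterial difference.
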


\begin{proof} $2^\kappa = 2^\omega \times 2^{\kappa \setminus \omega}$ and $2^\omega$ has a tunnel, according to Proposition \ref{sep}.
\end{proof}

We say that $A$ \emph{splits} $B$ if both $B\cap A$ and $B\setminus A$ are nonempty. A family $\mathscr{A}$ of subsets of $X$ is \emph{splitting} if every nonempty open subset of $X$ is split by some member of $\mathscr{A}$.
It will be convenient to notice that splitting families which form chains of open sets satisfy a slightly stronger splitting property.

\begin{lem}\label{spli} If $\mathscr{U}$ is a splitting chain, then for each nonempty open $V\subseteq X$ there is $U\in \mathscr{U}$ such that $V\cap U \ne \varnothing$ and $V\setminus \overline{U} \ne \varnothing$.
\end{lem}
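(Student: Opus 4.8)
The plan is to bootstrap the weak splitting property into the strong one using linearity of the order $\leq$. Let $V\subseteq X$ be nonempty open. Since $\mathscr{U}$ is splitting, there is $U_0\in\mathscr{U}$ that splits $V$, i.e.\ $V\cap U_0\ne\varnothing$ and $V\setminus U_0\ne\varnothing$. If moreover $V\setminus\overline{U_0}\ne\varnothing$, then $U=U_0$ already works and we are done. So the whole content of the lemma is the remaining case $V\subseteq\overline{U_0}$ (while still $V\not\subseteq U_0$).

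In that case fix a point $x\in V\setminus U_0$; since $V\subseteq\overline{U_0}$ this $x$ lies in $\overline{U_0}\setminus U_0$, and in particular $x\notin U_0$. The key idea is now to apply the splitting property a \emph{second} time, not to $V$ but to the nonempty open set $V\cap U_0$: there is $U_1\in\mathscr{U}$ with $(V\cap U_0)\cap U_1\ne\varnothing$ and $(V\cap U_0)\setminus U_1\ne\varnothing$.

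The crucial step is to determine the position of $U_1$ relative to $U_0$ inside the chain. Since $\leq$ is linear on $\mathscr{U}$, one of $U_1=U_0$, $\overline{U_0}\subseteq U_1$, or $\overline{U_1}\subseteq U_0$ holds. In the first case $(V\cap U_0)\setminus U_1=\varnothing$; in the second case $V\subseteq\overline{U_0}\subseteq U_1$ forces $(V\cap U_0)\setminus U_1=\varnothing$ as well; both contradict the choice of $U_1$. Hence $U_1<U_0$, that is $\overline{U_1}\subseteq U_0$. Then $x\notin U_0\supseteq\overline{U_1}$, so $x\in V\setminus\overline{U_1}$ and this set is nonempty, while $V\cap U_1\supseteq (V\cap U_0)\cap U_1\ne\varnothing$. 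Thus $U=U_1$ is as required.

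The argument is elementary and I expect no genuine obstacle: the only thing to spot is that one should re-apply splitting to $V\cap U_0$ (rather than to $V$) and then use that a chain is linearly ordered, after which everything rests on the trivial fact that an open set is disjoint from its boundary.
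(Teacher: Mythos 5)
Your proof is correct and uses essentially the same two-step idea as the paper: split $V$ to get $U_0$, then split $V\cap U_0$ to get $U_1$, and use linearity of the chain to conclude $U_1<U_0$, which yields $V\setminus\overline{U_1}\supseteq V\setminus U_0\ne\varnothing$. The initial case split on whether $V\setminus\overline{U_0}$ is already nonempty, and the choice of an explicit witness $x$, are harmless but unnecessary; the paper runs the second-splitting argument unconditionally and observes directly that $\overline{U_1}\subseteq U_0$ gives $V\setminus\overline{U_1}\supseteq V\setminus U_0\ne\varnothing$.
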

\begin{proof} Let $V\subseteq X$ be a nonempty open set. Consider $U' \in \mathscr{U}$ which splits $V$ and then $U\in \mathscr{U}$ which splits $U'\cap V$. We have $U<U'$ because $U'\leq U$ is impossible. Clearly, $V \cap U \ne \varnothing$ and $V\setminus \overline{U} \supseteq V\setminus U' \ne \varnothing$.
\end{proof}

The proof of the following simple fact is left to the reader.

\begin{prop}\label{tunnel-chain}
Every tunnel is a splitting chain.
\end{prop}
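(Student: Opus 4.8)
The plan is to read off the splitting property directly from the density of the boundaries, without invoking normality or any of the auxiliary facts used for Proposition~\ref{sep}. So let $\mathscr U$ be a tunnel and fix an arbitrary nonempty open set $V\subseteq X$; we must exhibit $U\in\mathscr U$ that splits $V$, i.e.\ with $V\cap U\ne\varnothing$ and $V\setminus U\ne\varnothing$. (Note $\mathscr U$ is by hypothesis a chain of open sets, so once we check it is splitting we are done.)

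First I would use that $\bigcup\{\partial U\colon U\in\mathscr U\}$ is dense in $X$: since $V$ is a nonempty open set, it meets this dense set, so there is $U\in\mathscr U$ with $V\cap\partial U\ne\varnothing$. Pick a point $x\in V\cap\partial U$. Now recall that for an \emph{open} set $U$ one has $\partial U=\overline U\setminus U$. Hence $x\in\overline U$, and since $V$ is an open neighbourhood of a point of $\overline U$, it must meet $U$; thus $V\cap U\ne\varnothing$. On the other hand $x\notin U$ while $x\in V$, so $x\in V\setminus U$ and therefore $V\setminus U\ne\varnothing$. This shows $U$ splits $V$, and as $V$ was arbitrary, $\mathscr U$ is a splitting family; being a chain of open sets, it is a splitting chain.

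There is essentially no obstacle here — this is precisely the "simple fact" the text leaves to the reader. The only two things to keep straight are the identity $\partial U=\overline U\setminus U$ valid for open $U$, and the elementary observation that every neighbourhood of a point of $\overline U$ meets $U$; with those in hand the argument is immediate. (One could equivalently phrase the conclusion in the weaker form ``nonempty'' rather than the Lemma~\ref{spli} strengthening ``$V\setminus\overline U\ne\varnothing$'', since for tunnels we only claim the basic splitting property.)
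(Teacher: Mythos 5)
Your argument is correct and is exactly the one-line verification the paper leaves to the reader: density of $\bigcup\{\partial U : U\in\mathscr U\}$ gives a point $x\in V\cap\partial U$, and since $\partial U=\overline U\setminus U$ for open $U$, the point $x$ witnesses $V\setminus U\ne\varnothing$ while $x\in\overline U$ forces $V\cap U\ne\varnothing$. Nothing is missing; as you note, the stronger form $V\setminus\overline U\ne\varnothing$ is not claimed here and is handled separately in Lemma~\ref{spli}.
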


Of course not every splitting chain is a tunnel, but in compact spaces each splitting chain can be used to produce a tunnel.

\begin{prop} \label{chain-tunnel}Assume that $\mathscr{U}$ is a splitting chain in a compact space $K$. Then 
	\[ \mathscr{V} = \left\{\bigcup \mathscr{U}'\colon \varnothing \ne \mathscr{U}'\subseteq \mathscr{U} \mbox{ and } \mathscr{U}'\mbox{ does not have a }\leq\mbox{-maximal element} \right\} \] 
	forms a tunnel in $K$. Moreover, $\mathscr{V}$ has the following properties:
	\begin{enumerate}
		\item \label{p1} if $\mathscr{V}'\subseteq \mathscr{V}$, then $\bigcup \mathscr{V}' \in \mathscr{V}$;
		\item \label{p2} if $V<U\in \mathscr{V}$, then there is $W\in \mathscr{V}$ such that $V<W<U$.
	\end{enumerate}
\end{prop}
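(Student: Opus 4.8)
The plan is to verify, in order, that $\mathscr{V}$ is a chain, that its boundaries are dense (so it is a tunnel), and then the two additional properties. For the chain property, note that any $V\in\mathscr{V}$ has the form $V=\bigcup\mathscr{U}'$ for some $\mathscr{U}'\subseteq\mathscr{U}$ without a $\leq$-maximal element. Given two such sets $V_1=\bigcup\mathscr{U}_1$ and $V_2=\bigcup\mathscr{U}_2$, either one of $\mathscr{U}_1,\mathscr{U}_2$ is cofinal in the other (with respect to $\leq$ inside $\mathscr{U}$), in which case the corresponding union is contained in the other, or else there is some $U\in\mathscr{U}_1$ that is a strict upper bound for all of $\mathscr{U}_2$; since $\mathscr{U}_2$ has no maximum, every element of $\mathscr{U}_2$ is $<$ some other element of $\mathscr{U}_2$, hence $\overline{U'}\subseteq U''$ for a later $U''\in\mathscr{U}_2$, and one gets $\overline{V_2}=\overline{\bigcup\mathscr{U}_2}\subseteq V_1$ using that a later element $U\in\mathscr{U}_1$ lies above $\overline{U'}$ for each $U'\in\mathscr{U}_2$ — I would want to be a little careful here that $\overline{\bigcup\mathscr{U}_2}$ really is contained in $V_1$ and not merely in $\overline{V_1}$, which is where the ``no maximal element'' clause earns its keep: pick $U\in\mathscr{U}_1$ with $U'<U$ for all $U'\in\mathscr{U}_2$, then $\overline{V_2}\subseteq\overline{U'}\subseteq U$ for a suitable single $U'$ only if $\mathscr{U}_2$ has a largest element, so instead one argues $\overline{V_2}=\bigcap\{\overline{B}: B \text{ open}, V_2\subseteq B\}$-style, or more simply notes $\overline{V_2}\subseteq\bigcup_{U'\in\mathscr{U}_2}\overline{U'}\subseteq\bigcup_{U'\in\mathscr{U}_2}U''_{U'}\subseteq V_1$ where $U''_{U'}\in\mathscr{U}_1$, $U''_{U'}\geq \overline{U'}$ — wait, that inclusion of closures into the union need not hold in general. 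The cleanest route is: $V_2\subseteq U$ with $U\in\mathscr{U}_1$ and, since $\mathscr{U}_1$ has no maximum, $U<U_*$ for some $U_*\in\mathscr{U}_1$, so $\overline{V_2}\subseteq\overline{U}\subseteq U_*\subseteq V_1$. That settles it.

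Next, $\mathscr{V}$ is a tunnel, i.e. $\bigcup\{\partial V:V\in\mathscr{V}\}$ is dense. Let $O\subseteq K$ be nonempty open. By Lemma~\ref{spli} applied to the splitting chain $\mathscr{U}$, there is $U_0\in\mathscr{U}$ with $O\cap U_0\neq\varnothing$ and $O\setminus\overline{U_0}\neq\varnothing$. Now I would collect all members of $\mathscr{U}$ that still meet $O$ on the outside: consider $\mathscr{W}=\{U\in\mathscr{U}: O\setminus\overline{U}\neq\varnothing\}$ and its complement; the point is to find a ``boundary'' of the chain $\mathscr{U}$ inside $O$. Concretely, let $\mathscr{U}'=\{U\in\mathscr{U}: \overline{U}\cap O\text{ is not dense in }O\}$ or, better, split $\mathscr{U}$ at $O$ as follows. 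Put $\mathscr{U}_-=\{U\in\mathscr{U}: O\setminus\overline{U}\neq\varnothing\}$; this is an initial segment of $\mathscr{U}$ (if $U<U'$ and $O\setminus\overline{U'}\neq\varnothing$ then $O\setminus\overline U\supseteq O\setminus\overline{U'}\neq\varnothing$), it is nonempty (it contains $U_0$), and it is a proper initial segment only if some $U'$ has $O\subseteq\overline{U'}$; by Lemma~\ref{spli} applied to each relevant open set one checks $\mathscr{U}_-$ has no $\leq$-maximal element, because if $U$ were maximal in $\mathscr{U}_-$ then the nonempty open set $O\setminus\overline U$ would be split by some $U''\in\mathscr{U}$ with $U<U''$, forcing $U''\in\mathscr{U}_-$. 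Hence $V:=\bigcup\mathscr{U}_-\in\mathscr{V}$. It remains to see $\partial V\cap O\neq\varnothing$: we have $V\cap O\neq\varnothing$ (as $U_0\subseteq V$ meets $O$), and $O\setminus V\neq\varnothing$ — this is where compactness enters, since if $O\subseteq V=\bigcup\mathscr{U}_-$ then, $O$ having nonempty interior but also needing to be covered... actually $O$ need not be compact; instead apply Lemma~\ref{spli} once more: some $U''\in\mathscr{U}$ splits $O$, and if $U''\in\mathscr{U}_-$ then $O\cap U''\subseteq V$ with $O\setminus U''$... hmm. The robust argument: if $O\setminus V=\varnothing$, i.e.\ $O\subseteq V$, take $x\in O$; then $x\in U$ for some $U\in\mathscr{U}_-$, so $O\setminus\overline U\neq\varnothing$, and a small neighbourhood argument plus Lemma~\ref{spli} on $O\setminus\overline U$ produces $U'<U$ with nonempty $U'\cap(O\setminus\overline U)$, contradicting $U'<U$. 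The upshot is $\varnothing\neq O\setminus V$ together with $\varnothing\neq O\cap V$ and $O$ open gives $O\cap\partial V\neq\varnothing$ provided $V$ is not clopen on $O$, which the above ensures. I expect this density step to be the main obstacle: pinning down exactly why the ``cut'' $V=\bigcup\mathscr{U}_-$ has a boundary point inside the prescribed open set $O$, and making the appeal to Lemma~\ref{spli} (rather than a direct compactness argument) completely rigorous, is the only genuinely delicate part.

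Finally I turn to properties~\eqref{p1} and~\eqref{p2}. For~\eqref{p1}, given $\mathscr{V}'\subseteq\mathscr{V}$, write each $V\in\mathscr{V}'$ as $\bigcup\mathscr{U}_V$ with $\mathscr{U}_V\subseteq\mathscr{U}$ having no maximum, and set $\mathscr{U}''=\bigcup_{V\in\mathscr{V}'}\mathscr{U}_V$. Then $\bigcup\mathscr{V}'=\bigcup\mathscr{U}''$, and $\mathscr{U}''$ has no $\leq$-maximal element: any $U\in\mathscr{U}''$ lies in some $\mathscr{U}_V$, which contains a strictly larger $U'$, and $U'\in\mathscr{U}''$. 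Hence $\bigcup\mathscr{V}'\in\mathscr{V}$ (if $\mathscr{V}'=\varnothing$ one may interpret the empty union as $\varnothing$, or simply restrict to nonempty $\mathscr{V}'$, matching the convention in the definition of $\mathscr{V}$; I would add a parenthetical remark to this effect). For~\eqref{p2}, suppose $V<U\in\mathscr{V}$, so $\overline V\subseteq U=\bigcup\mathscr{U}_U$ with $\mathscr{U}_U$ having no maximum. By compactness of $\overline V$ and the fact that $\mathscr{U}_U$ is $\leq$-linearly ordered (hence an increasing open cover of the compact set $\overline V$), there is a single $U_1\in\mathscr{U}_U$ with $\overline V\subseteq U_1$; since $\mathscr{U}_U$ has no maximum, pick $U_1<U_2$ in $\mathscr{U}_U$. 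Now set $W=\bigcup\{U'\in\mathscr{U}: U'\leq U_1\text{ and }U'\in\mathscr{U}_U\}$ — or more simply $W=\bigcup\{U'\in\mathscr{U}_U: U'<U_2\}$; this is a member of $\mathscr{V}$ once we check no maximal element, which holds because the chain $\mathscr{U}_U$ restricted below $U_2$ has no maximum (given $U'<U_2$ in $\mathscr{U}_U$, if it were maximal among elements $<U_2$ we would need $U'$ and $U_2$ adjacent, but one can slip $U_1$-type elements in using that $\mathscr{U}_U$ has no maximum and rechoosing; alternatively shrink to the initial segment determined by $U_1$). Then $\overline V\subseteq U_1\subseteq W$ gives $V<W$ (strictly, since $V\neq W$ as $W\supseteq\overline V$), and $\overline W\subseteq U_2\subseteq U$ gives $W<U$. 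This uses exactly the two facts: compactness (for the finite-to-one reduction of the cover) and the no-maximal-element clause (to have room to interpolate), which is precisely why the statement is phrased for compact $K$.
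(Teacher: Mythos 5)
There is a genuine error in the density step, and it is not merely a matter of making the appeal to Lemma~\ref{spli} more rigorous. Your candidate boundary point comes from $V=\bigcup\mathscr{U}_-$ with $\mathscr{U}_-=\{U\in\mathscr{U}:O\setminus\overline U\neq\varnothing\}$, but this $V$ can fail to have a boundary point inside $O$. Concretely, take $K=[0,1]$, $\mathscr{U}=\{[0,r):0<r<1\}$ (a splitting chain) and $O=(0.3,0.7)$. Then $\mathscr{U}_-=\{[0,r):r<0.7\}$, so $V=[0,0.7)$ and $\partial V=\{0.7\}\not\subseteq O$. In particular $O\subseteq V$, which also shows that your ``robust argument'' for $O\setminus V\neq\varnothing$ is wrong: Lemma~\ref{spli} applied to the nonempty open set $O\setminus\overline U$ produces $U''\in\mathscr{U}$ with $U''\cap(O\setminus\overline U)\neq\varnothing$, and that inequality forces $U<U''$, not $U''<U$, so there is no contradiction. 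The deeper problem is that your $V$ is the union of the \emph{entire} initial segment of $\mathscr{U}$ that still pokes out of $\overline{\,\cdot\,}$-wise into $O$, so its boundary sits ``at the top of $O$,'' which an open $O$ need not contain. The paper avoids this by first shrinking $O$ to $W'<O$ with $\overline{W'}\subseteq O$ compact, and then recursively building a countable strictly increasing sequence $U_1<U_2<\cdots$ in $\mathscr{U}$ each of which splits $W'$ (each $U_{n+1}$ comes from Lemma~\ref{spli} applied to the nonempty open set $W'\setminus\overline{U_n}$). Taking $V=\bigcup_n U_n$, the witness points $x_n\in W'\cap U_{n+1}\setminus\overline{U_n}$ have a cluster point $x$ by compactness, and one checks $x\in\overline V\cap\overline{W'}\setminus V\subseteq\partial V\cap O$. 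Because the $U_n$ are constrained to keep splitting $W'$, the union cannot escape $\overline{W'}$ the way your $V$ escapes $O$.

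On the remaining parts: your chain argument, after some wandering, lands on essentially the paper's observation (if $V_0\subseteq U_1\in\mathscr{U}_1$ and $\mathscr{U}_1$ has no maximum, pick $U_1<U_2\in\mathscr{U}_1$ and get $\overline{V_0}\subseteq\overline{U_1}\subseteq U_2\subseteq V_1$), so that part is fine. Property~\eqref{p1} is fine as you wrote it. For property~\eqref{p2} you correctly identify that $\overline V$ is compact and hence sits inside a single $U_1\in\mathscr{U}_U$, but your justification that $\{U'\in\mathscr{U}_U:U'<U_2\}$ has no $\leq$-maximal element (``one can slip $U_1$-type elements in'') is not an argument: that set can perfectly well have a maximum if $U_2$ has an immediate predecessor. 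You need a different choice of $W$ here; for example, picking a strictly increasing $\omega$-sequence $U_1<U_2<\cdots$ in $\mathscr{U}_U$ that is \emph{not} cofinal (so there is $U^*\in\mathscr{U}_U$ above all $U_n$, and then $U^{**}>U^*$ in $\mathscr{U}_U$ gives $\overline{\bigcup_n U_n}\subseteq\overline{U^*}\subseteq U^{**}\subseteq U$) does the job in the case $\mathscr{U}_U$ has uncountable cofinality, and the countable-cofinality case requires a separate argument. As written, this step is also left open.
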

\begin{proof} First, we will show that $\mathscr{V}$ is a chain. Let $V_0$, $V_1$ be distinct elements of $\mathscr{V}$ and let $V_0 = \bigcup \mathscr{U}_0$, $V_1 = \bigcup \mathscr{U}_1$, where $\mathscr{U}_0$ and $\mathscr{U}_1$ are subfamilies of
	$\mathscr{U}$ without maximal elements. Without loss of generality, we may assume that there is
	$U_1 \in \mathscr{U}_1$ such that $U<U_1$ for each $U\in \mathscr{U}_0$ and so $V_0 \subseteq U_1$. Since $U_1$ is not maximal in $\mathscr{U}_1$, there is $U_1 < U_2 \in \mathscr{U}_1$. Hence $V_0 \subseteq U_1 < U_2 \subseteq V_1$ and so $V_0 <
	V_1$. So, $\mathscr{V}$ is a chain.

	Now let $W$ be a nonempty open subset of $K$ and let $W'$ be a nonempty open set such that $W' < W$ (since $K$ admits a splitting chain, it cannot have an isolated point, so there is such $W'$). Using Lemma \ref{spli} we can recursively find a sequence $(U_n)$ of elements
	of $\mathscr{U}$ such that $U_n < U_{n+1}$ and $U_n$ splits $W'$ for every $n$. Then  $V = \bigcup_n U_n \in \mathscr{V}$. By compactness, there is \[ x\in \overline{V}\cap \overline{W'} \setminus V. \]
	But this means that $x\in \partial V \cap \overline{W'}$ and so $x \in \partial V \cap W$.

	It is straightforward to check that $\mathscr{V}$ has properties (\ref{p1}) and (\ref{p2}). 
\end{proof}

A variant of the above proposition in which we take only countable unions will be important for us later:

\begin{prop}\label{forantonio}
	Assume that $\mathscr{U}$ is a splitting chain in a compact space $K$. Then 
	\[ \mathscr{V}_\omega = \left\{\bigcup_{n\in \omega} W_n : W_n\in\mathscr{U}, W_1 < W_2<\cdots \right\} \] 
	forms a tunnel in $K$.
\end{prop}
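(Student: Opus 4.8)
The plan is to mimic the proof of Proposition~\ref{chain-tunnel}, but to produce, for each nonempty open set, a boundary point coming from a \emph{countable} increasing union of elements of $\mathscr U$, since only such unions are allowed into $\mathscr V_\omega$. First I would observe that $\mathscr V_\omega\subseteq\mathscr V$ (the tunnel constructed in Proposition~\ref{chain-tunnel}): a countable strictly increasing union $\bigcup_n W_n$ with $W_1<W_2<\cdots$ is of the form $\bigcup\mathscr U'$ for $\mathscr U'=\{W_n:n\in\omega\}$, which has no $\leq$-maximal element. Hence $\mathscr V_\omega$ is automatically a chain of open sets, being a subfamily of a chain, and there is nothing to check there.

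Next I would verify the density of $\bigcup\{\partial V:V\in\mathscr V_\omega\}$. Fix a nonempty open $W\subseteq K$. Since $K$ carries a splitting chain it has no isolated point, so (using Hausdorffness, hence regularity via compactness) there is a nonempty open $W'$ with $W'<W$, i.e.\ $\overline{W'}\subseteq W$. Using Lemma~\ref{spli} repeatedly, I build a sequence $(U_n)_{n\in\omega}$ in $\mathscr U$ with $U_n<U_{n+1}$ and with each $U_n$ splitting $W'$: given $U_n$, Lemma~\ref{spli} applied to the nonempty open set $W'\setminus\overline{U_n}$ (nonempty because $U_n$ splits $W'$) yields $U_{n+1}\in\mathscr U$ with $(W'\setminus\overline{U_n})\cap U_{n+1}\neq\varnothing$ and $(W'\setminus\overline{U_n})\setminus\overline{U_{n+1}}\neq\varnothing$; the first condition forces $U_{n+1}\not\leq U_n$, hence $U_n<U_{n+1}$ since $\mathscr U$ is a chain, and the second shows $U_{n+1}$ splits $W'$. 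Then $V=\bigcup_n U_n\in\mathscr V_\omega$. Each $U_n$ misses a point of $W'$, so $W'\setminus V\supseteq W'\setminus\bigcup_n U_n$, but I need this set to be nonempty \emph{and} to meet $\overline V$; as in Proposition~\ref{chain-tunnel} this is where compactness enters: the closed sets $\overline{W'}\cap(\overline{U_{n+1}}\setminus U_n)$ — or more simply, just note $\overline{W'}\setminus V$ is the intersection of the decreasing family of nonempty closed sets $\overline{W'}\setminus U_n$ and is therefore nonempty by compactness. Pick $x\in\overline{W'}\setminus V$. Since each $U_n\subseteq V$ splits $W'$, $V\cap W'\neq\varnothing$, so $V$ is a nonempty open set with $x\in\overline V\setminus V=\partial V$ and $x\in\overline{W'}\subseteq W$. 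Thus $\partial V\cap W\neq\varnothing$, proving density.

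I expect the only genuinely delicate point to be the compactness argument showing $\overline{W'}\setminus V\neq\varnothing$: one must be careful that $V=\bigcup_n U_n$ is a countable increasing union, so $\overline{W'}\setminus V=\bigcap_n(\overline{W'}\setminus U_n)$ is a countable intersection of nonempty closed subsets of the compact $\overline{W'}$ with the finite intersection property (the family is decreasing), hence nonempty — this is exactly the place where we cannot get away with less than compactness, and it is also the place where it matters that we chose $W'<W$ rather than working with $W$ directly, so that the boundary point lands inside $W$. Everything else (that $\mathscr V_\omega$ is a chain, that each $U_n$ continues to split $W'$) is routine and already essentially contained in the proofs of Lemma~\ref{spli} and Proposition~\ref{chain-tunnel}.
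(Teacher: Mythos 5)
Your overall strategy matches the paper's: the paper's own proof is literally ``just the same proof as the previous proposition,'' and indeed the witness $V=\bigcup_n U_n$ constructed in the proof of Proposition~\ref{chain-tunnel} is already a countable strictly increasing union, hence lies in $\mathscr V_\omega$. Your preliminary observations ($\mathscr V_\omega\subseteq\mathscr V$, so it is automatically a chain; the recursion via Lemma~\ref{spli} applied to $W'\setminus\overline{U_n}$) are fine.

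However, there is a genuine gap at the final compactness step. You pick $x\in\overline{W'}\setminus V$, note $V\cap W'\ne\varnothing$, and then conclude ``$x\in\overline V\setminus V=\partial V$.'' That inference does not hold: knowing that $V$ is nonempty open and that $x\notin V$ gives no information whatsoever about whether $x\in\overline V$. (If $W'$ has several ``pieces'' and all the $U_n$ accumulate only near one of them, a point of $\overline{W'}\setminus V$ taken from a different piece will not lie in $\overline V$.) Your ``more simply'' replacement of $\overline{W'}\cap(\overline{U_{n+1}}\setminus U_n)$ by $\overline{W'}\setminus U_n$ is precisely what discards the constraint that forces $x$ into $\overline V$; note also that the sets $\overline{W'}\cap(\overline{U_{n+1}}\setminus U_n)$ are not nested, so they would not directly support a finite-intersection argument either. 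The correct closed sets are $F_n=\overline V\cap\overline{W'}\setminus U_n$: they are decreasing, closed, and nonempty because your construction supplies a point $y\in U_{n+1}\cap W'\setminus\overline{U_n}$, which lies in $\overline V$ (since $U_{n+1}\subseteq V$), in $\overline{W'}$, and outside $U_n$. Compactness then gives $\bigcap_n F_n=\overline V\cap\overline{W'}\setminus V\ne\varnothing$, and any point there is in $\partial V\cap\overline{W'}\subseteq\partial V\cap W$, completing the density argument.
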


\begin{proof}
	Just the same proof as the previous proposition.
\end{proof}

\begin{remark}\label{clopen} Thanks to Proposition \ref{chain-tunnel} to show that a compact zerodimensional space has a tunnel it is enough to find a family of clopens $\mathscr{C}$ which is linearly ordered by $\subseteq$ and which is splitting (although, the tunnel 
	produced according to the recipe from Proposition \ref{chain-tunnel} does not contain any element of $\mathscr{C}$). It is however unclear for us if the existence of a tunnel in a compact zerodimensional space implies the existence of
	such chain of clopens.
\end{remark}

\begin{thm}\label{Equiv} Let $K$ be a compact space. The following are equivalent:
	\begin{itemize}
		\item[(a)] $K$ has a tunnel.
		\item[(b)] $K$ has a splitting chain of open sets.
		\item[(c)] There is a continuous mapping $f\colon K \longrightarrow L$, where $L$ is a linearly ordered space, whose fibers are nowhere dense (i.e. $f^{-1}(l)$ is nowhere dense for each $l\in L$).
	\end{itemize}
\end{thm}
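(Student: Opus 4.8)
The plan is to derive $\mathrm{(a)}\Leftrightarrow\mathrm{(b)}$ straight from Propositions~\ref{tunnel-chain} and~\ref{chain-tunnel}, and then to close the circle by proving $\mathrm{(c)}\Rightarrow\mathrm{(b)}$ and $\mathrm{(a)}\Rightarrow\mathrm{(c)}$.

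For $\mathrm{(c)}\Rightarrow\mathrm{(b)}$, given $f\colon K\to L$ as in~(c) I would take the family $\mathscr{S}=\{f^{-1}(\{t\in L\colon t<l\})\colon l\in L\}$ of preimages of the open initial rays of $L$. Its members are open by continuity of $f$. It is a chain for $\leq$: if $l<l'$ then, since $\{t\le l\}$ is closed in $L$ and contained in $\{t<l'\}$, we get $\overline{f^{-1}(\{t<l\})}\subseteq f^{-1}(\overline{\{t<l\}})\subseteq f^{-1}(\{t\le l\})\subseteq f^{-1}(\{t<l'\})$. And it is splitting: for a nonempty open $W$ the image $f(W)$ cannot be a single point, as otherwise $W$ would be a nonempty open subset of a (nowhere dense) fibre; so there are $a<b$ in $f(W)$, and $f^{-1}(\{t<b\})$ meets $W$ at a point lying over $a$ while $W\setminus f^{-1}(\{t<b\})$ contains a point lying over $b$.

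For $\mathrm{(a)}\Rightarrow\mathrm{(c)}$, which I expect to carry the real weight, I would first replace the given tunnel by the tunnel $\mathscr{V}$ furnished by Proposition~\ref{chain-tunnel}, so that $\mathscr{V}$ is closed under arbitrary unions~(\ref{p1}) and densely ordered~(\ref{p2}). Since on a chain of open sets $\subseteq$ and $\leq$ coincide on distinct members, $(\mathscr{V},\subseteq)$ is a densely ordered linear order; topologise it with the order topology and call it $L$. Define $f\colon K\to L$ by letting $f(x)$ be the largest member of $\mathscr{V}$ that omits $x$, i.e.\ $f(x)=\bigcup\{V\in\mathscr{V}\colon x\notin V\}$, which lies in $\mathscr{V}$ by~(\ref{p1}) and does omit $x$. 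To verify continuity it is enough to treat the preimages of subbasic open rays. One gets $f^{-1}(\{V\in\mathscr{V}\colon V\subsetneq V_0\})=V_0$: if $x\in V_0$ then every member of $\mathscr{V}$ omitting $x$ is $\subsetneq V_0$, so $f(x)\subsetneq V_0$, while if $x\notin V_0$ then $V_0$ omits $x$ and thus $V_0\subseteq f(x)$. On the other side $f^{-1}(\{V\in\mathscr{V}\colon V\supsetneq V_0\})=K\setminus\bigcap\{V\in\mathscr{V}\colon V\supsetneq V_0\}$, and here density enters: for each $V\supsetneq V_0$ choose by~(\ref{p2}) some $V'$ with $V_0<V'<V$; then $\bigcap\{U\in\mathscr{V}\colon U\supsetneq V_0\}\subseteq V'$, hence its closure is contained in $\overline{V'}\subseteq V$, and letting $V$ range shows that intersection is closed, so its complement is open.

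It remains to see that the fibres of $f$ are nowhere dense. If $f^{-1}(\{V_0\})$ had nonempty interior $G$, then each point of $G$ lies outside $V_0$ and inside every $V\in\mathscr{V}$ with $V\supsetneq V_0$; hence, if the boundary of some $V\in\mathscr{V}$ met $G$, the case $V\subsetneq V_0$ would be impossible (by~(\ref{p2}) pick $V'$ with $V<V'<V_0$; then $\overline V\subseteq V_0$, contradicting $G\cap V_0=\varnothing$) and so would the case $V\supsetneq V_0$ (then $G\subseteq V$, against $\partial V\cap G\ne\varnothing$), so $V=V_0$. Thus $\bigcup\{\partial V\colon V\in\mathscr{V}\}$ meets $G$ only within $\partial V_0$, and since that union is dense in $K$, $\partial V_0$ would contain $G$ — impossible, as the boundary of an open set has empty interior. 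The step I expect to be the main obstacle is precisely this continuity check, and more specifically the fact that $\bigcap\{V\in\mathscr{V}\colon V\supsetneq V_0\}$ is closed: it is exactly there that the density clause~(\ref{p2}) of Proposition~\ref{chain-tunnel} is indispensable, which is the reason one passes to that enriched tunnel rather than arguing with a bare splitting chain.
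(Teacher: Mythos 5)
Your proposal is correct and follows essentially the same route as the paper: the equivalence $(a)\Leftrightarrow(b)$ comes from Propositions~\ref{tunnel-chain} and~\ref{chain-tunnel}, the map for $(a)\Rightarrow(c)$ is the same $f(x)=\bigcup\{V\in\mathscr V:x\notin V\}$ into $\mathscr V$ with the order topology (the paper verifies continuity by finding a neighbourhood $U\setminus\overline W\subseteq f^{-1}(V,U)$ rather than by computing preimages of subbasic rays, but both arguments lean on the density property~(\ref{p2}) in the same way), and the preimages-of-initial-rays construction for $(c)\Rightarrow(b)$ is identical.
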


\begin{proof}
	The equivalence of (a) and (b) follows from Proposition \ref{chain-tunnel} and Proposition \ref{tunnel-chain}.
	\medskip

	(b)$\implies$(c)\,  Assume that $\mathscr{V}$ is a tunnel in $K$. We may immediately assume that it has properties (\ref{p1}) and (\ref{p2}) of Proposition \ref{chain-tunnel}. Now, equip $\mathscr{V}$ with the order
	topology with respect to ``$\leq$''. Define $f\colon K \longrightarrow \mathscr{V}$ by
	\[ f(x) = \bigcup \{U\in \mathscr{V}\colon x\notin U\}. \]
	Assume that  $V< U\in \mathscr{V}$ and $f(x_0)\in (V,U)$ and notice that $x_0\in U$. In order to verify the continuity of $f$ we will show that $x_0$ has an open neighbourhood contained in $f^{-1}(V,U)$. 
	First, notice that for each $x\in K$ we have $x\notin f(x)$. Therefore, if $x\in U$, then $f(x)<U$. Second, if $V<W$, $W\in \mathscr{V}$ and $x\notin W$, then $f(x)>V$. Now, let $W\in \mathscr{V}$ be such that $V<W<f(x_0)$ and notice that
	$x_0\notin \overline{W}$. Then, using the above remarks, we have \[ x_0 \in U\setminus \overline{W}\subseteq
	f^{-1}(V,U).\] 

\medskip

(c)$\implies$(b)\, Suppose $f\colon K \longrightarrow L$ is a mapping with the desired properties. For $l\in L$ by $(-\infty,l)$ we will denote the set $\{x\in L\colon x < l\}$, where $\leq$ is the linear ordering compatible with the topology of
$L$. We will understand $(-\infty,l]$ in the similar way. For $l\in L$ let 
\[ U_l = f^{-1}(-\infty,l). \]
We claim that $\{U_l\colon l\in L\}$ is a splitting chain of open sets.
First, we will check that it is a chain. Let $l < l' \in L$. Then, by continuity of $f$,
\[ \overline{U_l} = \overline{f^{-1}(-\infty,l)} \subseteq f^{-1}\overline{(-\infty,l)} \subseteq f^{-1}(-\infty,l] \subseteq f^{-1}(-\infty, l') = U_{l'}. \]
To show that $\{U_l\colon l\in L\}$ is splitting, consider a nonempty open set $V\subseteq K$ and notice that, by the assumption on $f$, we can find $l_0<l_1$ in $f[V]$. Then $U_{l_1}$ splits $V$.
\end{proof}

\begin{remark} In \cite{Nyikos} Nyikos introduced the notion of \emph{complete tunnel}. A chain of open subsets $\mathscr{U}$ of $X$ is a complete tunnel if for every $\mathscr{U}'\subseteq \mathscr{U}$ we have \[ \mathrm{Int}\Big{(}\bigcap\{U\in
	\mathscr{U}\colon U' < U \mbox{ for every }U'\in \mathscr{U}'\}\Big{)} \subseteq \overline{\bigcup\mathscr{U}'}. \]
	Nyikos proved (\cite[Theorem 1.5]{Nyikos}) that being a complete tunnel is equivalent to (c) of Theorem~\ref{Equiv} and so it is equivalent, at least in the realm of compact spaces, to being a tunnel in our sense.
\end{remark}

\begin{cor}\label{Countable-split} Let $K$ be a compact space without an isolated point and let $L$ be linearly ordered and metrizable. Assume that there is a continuous mapping  $f\colon K \longrightarrow L$ with nowhere dense fibers. Then $K$ has a countable splitting chain of $F_\sigma$-open sets.
\end{cor}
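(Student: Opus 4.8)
The plan is to reuse the construction from the implication (c)$\Rightarrow$(b) of Theorem~\ref{Equiv}, keeping only countably many of the sets $U_l=f^{-1}(-\infty,l)$. Since $f$ is continuous and $K$ is compact we may assume $f$ is surjective, so that $L=f[K]$ is compact and metrizable, hence second countable; fix a countable dense set $D_0\subseteq L$. As in the proof of Theorem~\ref{Equiv}, for $l\le l'$ in $L$ one has $\overline{U_l}\subseteq f^{-1}(-\infty,l]\subseteq U_{l'}$, so \emph{every} subfamily of $\{U_l\colon l\in L\}$ is a chain of open sets; and since $(-\infty,l)$ is an open, hence $F_\sigma$, subset of the metric space $L$, each $U_l=f^{-1}(-\infty,l)$ is $F_\sigma$-open in $K$. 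Thus any countable subfamily of $\{U_l\colon l\in L\}$ is already a countable chain of $F_\sigma$-open sets, and the whole task is to choose the indices so that the family is splitting.

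The splitting mechanism is the same as in Theorem~\ref{Equiv}: if $V\subseteq K$ is nonempty open then $f[V]$ has at least two points (otherwise $V$ would lie inside a nowhere dense fiber), and if $d$ satisfies $l_0<d\le l_1$ for some $l_0<l_1$ in $f[V]$, then $U_d$ splits $V$, since $V$ meets $f^{-1}(-\infty,d)$ in a point lying over $l_0$ and meets its complement in a point lying over $l_1$. A countable dense set $D_0$ alone may fail to supply such a $d$: this happens precisely when $l_0$ and $l_1$ are consecutive points of $L$, i.e.\ $(l_0,l_1)=\varnothing$ is a \emph{jump}. The remedy is to also adjoin to the index set the upper endpoints of all jumps of $L$, and here the key point is that \emph{a second countable linearly ordered space has only countably many jumps}: if $(a,b)$ is a jump then $(-\infty,a]=(-\infty,b)$ is clopen, so from a countable base one can pick a basic set $B\ni a$ with $B\subseteq(-\infty,a]$, and this assignment is injective, because if $a\ne a'$ are lower endpoints of two jumps assigned the same $B$, then $a,a'\in B\subseteq(-\infty,a]\cap(-\infty,a']$, which is impossible. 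Let $D$ be the union of $D_0$ with this countable set of upper endpoints of jumps.

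Finally I would verify that $\{U_d\colon d\in D\}$ is splitting. Given a nonempty open $V\subseteq K$, choose $l_0<l_1$ in $f[V]$. If $(l_0,l_1)\ne\varnothing$, it is a nonempty open subset of $L$, hence contains some $d\in D_0$, and $l_0<d<l_1$; if $(l_0,l_1)=\varnothing$, then $(l_0,l_1)$ is a jump, so its upper endpoint $l_1$ lies in $D$ and we take $d=l_1$, giving $l_0<d\le l_1$. In either case $U_d$ splits $V$ by the mechanism above. Hence $\{U_d\colon d\in D\}$ is a countable splitting chain of $F_\sigma$-open subsets of $K$, as desired.

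I expect the only genuinely non-routine ingredient to be the ``countably many jumps'' observation and its use in the case of consecutive values; the $F_\sigma$ clause is essentially free (open subsets of metric spaces are $F_\sigma$, and preimages of $F_\sigma$ sets under continuous maps are $F_\sigma$), and everything else is a transcription of the argument for Theorem~\ref{Equiv}.
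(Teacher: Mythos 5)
Your proof is correct, and it does more work than the paper's, which simply says ``that $\mathscr{U}$ is splitting can be proved in the same way as in Theorem~\ref{Equiv}''. You correctly notice that the verbatim repetition does not go through: the argument in Theorem~\ref{Equiv} uses $U_{l_1}$ with $l_1\in f[V]$, and $l_1$ need not lie in the countable dense set $D$. Your fix --- adjoining the (countably many) upper endpoints of jumps of $L$, with a nice counting argument via a countable base --- is valid. There is, however, a shorter fix that stays closer to the paper's structure and avoids jumps entirely: since the fibers of $f$ are closed and nowhere dense, no nonempty open $V\subseteq K$ can be covered by finitely many of them, so in fact $f[V]$ contains at least three points $a<b<c$. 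The interval $(a,c)$ is then a nonempty open subset of $L$ (it contains $b$), hence meets the dense set $D$ in some $d$, and $U_d$ splits $V$ because $a<d<c$ with $a,c\in f[V]$. So both routes close the same gap; yours is self-contained but slightly longer, while the ``three points'' observation is a one-line strengthening of your ``at least two points'' remark that makes the jump bookkeeping unnecessary.
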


\begin{proof}
	Let $D$ be a countable dense subset of $L$ and let $\mathscr{U} = \{U_d\colon d\in D\}$, where $U_x = f^{-1}(-\infty, x)$. That $\mathscr{U}$ is splitting can be proved in the same way as in proof of Theorem \ref{Equiv}, (c) $\implies$ (b).
\end{proof}

Recall that an \emph{interval algebra} is a Boolean algebra generated by a chain. The Stone space of an interval algebra is linearly ordered (and the Boolean algebra of clopens of a linearly ordered zerodimensional compact space forms an interval algebra). 

\begin{cor}\label{interval} If $\mathfrak{A}$ is a Boolean algebra which contains an interval subalgebra $\mathfrak{B}$ which splits nonempty elements of $\mathfrak{A}$, then the Stone space of $\mathfrak{A}$ has a tunnel. 
\end{cor}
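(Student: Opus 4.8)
The plan is to pass to Stone spaces and invoke the characterization of tunnels in Theorem~\ref{Equiv}. Write $K=\mathrm{St}(\mathfrak A)$ and $L=\mathrm{St}(\mathfrak B)$. Since $\mathfrak B$ is an interval algebra, $L$ is a compact linearly ordered space (as recalled just before the statement), and since the inclusion $\mathfrak B\subseteq\mathfrak A$ is an injective Boolean homomorphism, its Stone dual $\pi\colon K\longrightarrow L$, $\pi(u)=u\cap\mathfrak B$, is a continuous surjection. As $K$ is compact, Theorem~\ref{Equiv} reduces the problem to checking that every fibre $\pi^{-1}(\ell)$ is nowhere dense in $K$; since such a fibre is closed ($L$ being Hausdorff), this is the same as showing it has empty interior.

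So suppose, towards a contradiction, that for some $\ell\in L$ the fibre $\pi^{-1}(\ell)$ has nonempty interior. Then it contains a basic clopen set $[a]=\{u\in K\colon a\in u\}$ for some nonzero $a\in\mathfrak A$. I claim that no $b\in\mathfrak B$ splits $a$, contradicting the hypothesis. Indeed, if some $b\in\mathfrak B$ had $a\wedge b\neq 0$ and $a\wedge\neg b\neq0$, we could pick $u\in[a\wedge b]$ and $u'\in[a\wedge\neg b]$; both lie in $[a]\subseteq\pi^{-1}(\ell)$, so $\pi(u)=\pi(u')=\ell$, whence $b\in\ell$ (because $b\in u$) and $\neg b\in\ell$ (because $\neg b\in u'$), which is impossible for the ultrafilter $\ell$. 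Hence every fibre of $\pi$ has empty interior, i.e. $\pi$ has nowhere dense fibres, and Theorem~\ref{Equiv}, (c)$\implies$(a), produces a tunnel on $K$.

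There is no serious obstacle here: the only point requiring care is fixing the direction of Stone duality and correctly translating the Boolean notion of splitting into the topological assertion that the fibres of $\pi$ have empty interior. For the record, one can also avoid $L$ altogether and argue combinatorially. Let $C\subseteq\mathfrak B$ be a chain generating $\mathfrak B$; then $\{[c]\colon c\in C\}$ is in fact already a splitting chain of clopen sets in $K$. For if some nonzero $a\in\mathfrak A$ were split by no $c\in C$, then for each $c\in C$ either $[a]\subseteq[c]$ or $[a]\cap[c]=\varnothing$; since $\{[c]\colon c\in C\}$ is a $\subseteq$-chain, the first alternative holds on a final segment of $C$ and the second on an initial segment, so for any finitely many $c_1<\cdots<c_m$ in $C$ the set $[a]$ is contained in a single atom of the subalgebra they generate, and therefore $[a]\subseteq[b]$ or $[a]\cap[b]=\varnothing$ for every $b\in\mathfrak B$ — that is, $\mathfrak B$ does not split $a$, a contradiction. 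Thus $\{[c]\colon c\in C\}$ is a splitting chain of clopens, and Proposition~\ref{chain-tunnel} (cf. Remark~\ref{clopen}) yields a tunnel in $K$.
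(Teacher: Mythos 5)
Your first argument is essentially identical to the paper's: both pass to the Stone dual continuous surjection $\pi\colon K\to L$ onto the linearly ordered Stone space of $\mathfrak B$, observe that the splitting hypothesis forces every nonempty basic clopen of $K$ to meet two distinct fibres (you phrase it as a contradiction, the paper states it directly), and conclude via Theorem~\ref{Equiv}(c)$\implies$(a). Your second, ``for the record'' argument is a genuine alternative route: it bypasses the ordered space $L$ entirely by showing that the generating chain $C$ of $\mathfrak B$ already yields a splitting $\subseteq$-chain of clopens in $K$ (the key combinatorial point being that a set unsplit by every $c\in C$ sits inside a single atom of any finite subalgebra generated from $C$, hence is unsplit by all of $\mathfrak B$), and then invokes Proposition~\ref{chain-tunnel} via Remark~\ref{clopen}. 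The second route is slightly more self-contained and makes explicit what the tunnel looks like; the paper's route is shorter because it has Theorem~\ref{Equiv} at hand as a black box. Both are correct.
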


\begin{proof}
	Let $K$, $L$ be the Stone spaces of $\mathfrak{A}$, $\mathfrak{B}$ respectively. Then there is a canonical continuous surjection $f\colon K \longrightarrow L$, where $L$ is a linearly ordered space. If $V$ is a clopen subset of $K$, then it is split by some $B\in
	\mathfrak{B}$. So, if $x\in V\cap B$ and $y\in V\setminus B$, then $f(x)\ne f(y)$ and so $f$ has nowhere dense fibers.
\end{proof}

Recall that a measure $\mu$ on a compact space $K$ is \emph{strictly positive} if $\mu(U)>0$ for each nonempty open set $U\subseteq K$. A measure $\mu$ is \emph{atomless} if $\mu(\{x\})=0$ for every $x\in K$. If $K$ is zerodimensional, then $\mu$ is
atomless if and only if for every $\varepsilon>0$ there is a partition of $K$ into clopen sets of measure at most $\varepsilon$.

\begin{prop} Every compact zerodimensional space supporting a strictly positive atomless probability measure has a tunnel.
\end{prop}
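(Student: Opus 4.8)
The plan is to produce, straight from the measure, a \emph{countable} chain $\mathscr{C}$ of clopen subsets of $K$ which is splitting; by Remark~\ref{clopen} any such chain yields a tunnel (a countable one, through Proposition~\ref{chain-tunnel}), so that will be enough. The first thing to note is how much room atomlessness gives inside a clopen set: if $C$ is clopen with $\mu(C)>0$, the partition property recalled just before the statement lets us split $K$, hence $C$, into finitely many clopen pieces of measure below any prescribed $\varepsilon>0$, and a greedy union of those pieces then gives, for each $r\in[0,\mu(C)]$ and each $\varepsilon>0$, a clopen $F\subseteq C$ with $|\mu(F)-r|<\varepsilon$. In other words $\{\mu(F):F\subseteq C \text{ clopen}\}$ is dense in $[0,\mu(C)]$.

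Using this I would build $\mathscr{C}=\{C_n:n\in\omega\}$ by a routine recursion with bookkeeping. Start with $C_0=\varnothing$ and $C_1=K$, fix a countable dense $\{t_n:n\in\omega\}\subseteq[0,1]$, and at step $n$, if no member of the current finite chain has measure within $2^{-n}$ of $t_n$, then $t_n$ lies strictly between the measures of two consecutive members $C^-\subsetneq C^+$ of that chain; insert $C^-\cup F$ where $F\subseteq C^+\setminus C^-$ is clopen with $\mu(F)$ approximating $t_n-\mu(C^-)$ to within, say, half the distance from $t_n$ to $\{\mu(C^-),\mu(C^+)\}$ and to within $2^{-n}$ — possible by the density observation, since $t_n-\mu(C^-)$ lies strictly inside $\bigl(0,\mu(C^+\setminus C^-)\bigr)$. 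The outcome is a $\subseteq$-chain $\mathscr{C}$ of clopen sets with $\varnothing,K\in\mathscr{C}$ whose set of measures $D:=\{\mu(C_n):n\in\omega\}$ is dense in $[0,1]$. Note also that $\mu$ is strictly $\subseteq$-increasing on $\mathscr{C}$: if $C\subsetneq C'$ then $\mu(C'\setminus C)>0$ by strict positivity.

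The crux is that $\mathscr{C}$ is splitting. Suppose a nonempty clopen $V$ is split by no member of $\mathscr{C}$; then every $C\in\mathscr{C}$ has $V\cap C=\varnothing$ or $V\subseteq C$, so $\mathscr{C}=\mathscr{C}^0\sqcup\mathscr{C}^1$ with $\mathscr{C}^0=\{C:V\cap C=\varnothing\}$ a $\subseteq$-initial segment (it contains $\varnothing$) and $\mathscr{C}^1=\{C:V\subseteq C\}$ a final one (it contains $K$). Set $A=\bigcup\mathscr{C}^0$ and $B=\bigcap\mathscr{C}^1$; since $\mathscr{C}$ is countable these are a countable increasing union and a countable decreasing intersection of clopen sets, $A\subseteq B$, and $V\subseteq B\setminus A$. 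By countable additivity $\mu(A)=\sup_{C\in\mathscr{C}^0}\mu(C)$ and $\mu(B)=\inf_{C\in\mathscr{C}^1}\mu(C)$. Were $\mu(A)<\mu(B)$, density of $D$ would supply $C''\in\mathscr{C}$ with $\mu(A)<\mu(C'')<\mu(B)$; since $\mu$ is strictly $\subseteq$-increasing on $\mathscr{C}$, this $C''$ would strictly contain every element of $\mathscr{C}^0$ and be strictly contained in every element of $\mathscr{C}^1$, impossible for a member of $\mathscr{C}^0\sqcup\mathscr{C}^1$. Hence $\mu(A)=\mu(B)$, so $\mu(B\setminus A)=0$ and thus $\mu(V)=0$, contradicting the strict positivity of $\mu$ on the nonempty open set $V$. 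This proves $\mathscr{C}$ splitting, and Remark~\ref{clopen} finishes the proof.

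The one place demanding care is the measure bookkeeping in the second paragraph: for a merely finitely additive measure the clopen subsets of a clopen set need not realize every value in an interval, so the recursion must approximate the target measures rather than match them, and correspondingly the splitting argument must be driven by the density of $D$ in $[0,1]$ rather than by any exact value being attained. Beyond that the proof is soft — no maximal chain or Zorn's lemma enters, and the condition that $K$ have no isolated point is automatic from strict positivity together with atomlessness.
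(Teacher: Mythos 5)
Your proof is correct and follows essentially the same route as the paper: construct a chain of clopen sets whose measures are dense in $[0,1]$ (the paper does this by repeatedly picking a clopen $D\subseteq C$ with $\mu(D)\in(\mu(C)/4,3\mu(C)/4)$, you by approximating prescribed targets via partitions into clopen pieces of small measure) and then show that any such chain splits every nonempty clopen set, concluding through Remark~\ref{clopen} and Theorem~\ref{Equiv}. The only cosmetic deviations are that your splitting verification uses countable additivity along the countable chain, whereas the paper's sup-argument needs only the density of the measure values, and that the tunnel produced by Proposition~\ref{chain-tunnel} from your countable chain need not itself be countable --- neither point affects the validity of the proof.
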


\begin{proof} Assume $K$ supports such a measure $\mu$.
	It is enough to construct a chain $\mathscr{C}$ of clopen subsets of $K$ such that $\{\mu(C)\colon C\in \mathscr{C}\}$ is dense in $[0,1]$. Indeed, suppose that $\mathscr{C}$ has this property and $V$ is a nonempty clopen subset of $K$. Then
	$\mu(V)=r>0$. Let
	$R=\sup\{\mu(C)\colon C\in \mathscr{C}\mbox{ and } C\cap V=\varnothing\}\leq 1-r$ and consider $C\in \mathscr{C}$ such that $\mu(C)\in (R,R+r)$. Then $C\cap V\ne \varnothing$ because $\mu(C)>R$. If $V\subseteq C$, then $C$ contains $V$ and all the $C'\in\mathscr{C}$ such that $C'\cap V =\varnothing$, so we would have $\mu(C)>R+r$. We conclude that $C$ splits $V$. Now we can use Theorem \ref{Equiv}.

	One can construct $\mathscr{C}$ inductively subsequently using the following remark. Assume $C$ is non-empty clopen subsets of $K$. Then, by non-atomicity of $\mu$, there is a clopen set $D$ such that $D\subseteq C$ and
	$\mu(D)\in (\mu(C)/4, 3\mu(C)/4)$.	
\end{proof}

\subsection{Spaces without tunnels}
After so many examples of spaces having tunnels, we have to face the natural question: are there compact spaces without isolated points and without tunnels?

Recall that a compact space $K$ is \emph{Corson compact} if it can be embedded into \[ \Sigma(\mathbb{R}^\alpha) = \{x\in \mathbb{R}^\alpha\colon \{\xi\in \alpha\colon x(\xi) \ne 0\} \mbox{ is countable} \} \]
for some $\alpha$.

\begin{lem}\label{Corson} If $K$ is Corson compact and $K$ has a splitting chain, then $K$ has a countable splitting
	chain of $F_\sigma$ open sets.
\end{lem}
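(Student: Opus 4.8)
The plan is to reduce everything to the metrizable case that is already dealt with in Corollary~\ref{Countable-split}; the only genuine point is that the linearly ordered space produced by Theorem~\ref{Equiv} may be chosen metrizable. So first I would apply Theorem~\ref{Equiv}, in the form (b)$\Rightarrow$(c): a splitting chain on $K$ forces $K$ to have no isolated point and yields a continuous surjection $f\colon K\to L$ onto a compact linearly ordered space $L$ all of whose fibres $f^{-1}(l)$ are nowhere dense in $K$. If $L$ turns out to be metrizable, then Corollary~\ref{Countable-split} applies to $f$ as it stands and produces a countable splitting chain of $F_\sigma$ open sets in $K$, which is the asserted conclusion. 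Hence the whole matter is reduced to showing that $L$ is metrizable.

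This is where the Corson hypothesis is used. As a continuous image of the Corson compact space $K$, the space $L$ is itself Corson compact, by the (nontrivial, but well known) theorem that the class of Corson compacta is stable under continuous images. It then remains to observe that a Corson compact linearly ordered space is metrizable. One way to see this: a Corson compactum is Fr\'echet--Urysohn, and a linearly ordered space that is Fr\'echet--Urysohn has no point of uncountable character (a strictly monotone cofinal transfinite sequence of uncountable length converging to such a point has no convergent subsequence), so $L$ is first countable; combining this with the standard facts that Corson compacta have countable tightness and that every ccc Corson compactum is metrizable, one concludes that $L$ is metrizable. Alternatively, one may simply quote the known statement that a compact linearly ordered space is Corson compact if and only if it is metrizable.

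Once $L$ is metrizable, Corollary~\ref{Countable-split} closes the argument. The hard part is exactly this metrizability of $L$: it leans on external, nontrivial structural facts about Corson compacta. A more self-contained alternative, staying entirely inside $K$ and so sidestepping the continuous-image theorem, would run as follows. First pass, via Proposition~\ref{forantonio}, to a splitting chain made of open $F_\sigma$ sets (if $W_1<W_2<\cdots$ lie in a chain of open sets then $\bigcup_n W_n=\bigcup_n\overline{W_n}$, so each such countable union is $F_\sigma$ open). Then fix a countable elementary submodel $M\prec H(\theta)$ containing $K$, a $\Sigma$-embedding of $K$, and this chain, and verify that the trace of the chain on $M$ is still a (now countable) splitting chain; the crux there is that every nonempty open subset of $K$ contains a nonempty open subset ``coded in $M$'', which follows once one knows that the projection of $K$ onto the coordinates lying in $M$ is irreducible --- a routine property of the retractional skeleton of a Corson compactum. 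In either approach, some input from the theory of Corson compacta seems unavoidable.
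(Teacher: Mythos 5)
Your main argument is exactly the paper's: apply Theorem~\ref{Equiv} to get $f\colon K\to L$ with nowhere dense fibers onto a linearly ordered compactum, use stability of Corson compacta under continuous images to see $L$ is Corson, invoke the theorem that a linearly ordered Corson compactum is metrizable, and finish with Corollary~\ref{Countable-split}. One caveat: your sketched ``self-contained'' justification of that last metrizability step does not actually close (first countability plus countable tightness do not by themselves yield ccc, and ccc alone is what you then feed into ``ccc Corson $\Rightarrow$ metrizable''), but since you also offer simply quoting the known equivalence --- which is what the paper does, citing \cite{eficer} --- the proof as a whole is fine.
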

\begin{proof}
	According to Theorem~\ref{Equiv}, we have a continuous mapping $f:K\longrightarrow L$ with nowhere dense fibers onto a linearly ordered compact space. By \cite[IV.3.15]{Arhan}, $L$ is a Corson compact space. But every linearly ordered Corson compact space is metrizable \cite{eficer}, so we can use Corollary \ref{Countable-split}. 
\end{proof}

Recall than an \emph{Aronszajn tree} is an uncountable tree without an uncountable level and without an uncountable branch. Notice that Aronszajn trees are of height $\omega_1$. We will say that a Boolean algebra $\mathfrak{A}$ is \emph{Aronszajn} if it is
generated by an Aronszajn tree $T$, in the sense that there is a set of generators $\{a_t : t\in T\}$ of $\mathfrak{A}$ such that $a_t \leq a_s$ when $s\leq t$ and $a_t\cap a_s = 0$ when $s$ and $t$ are incomparable.

	\begin{thm}\label{aronszajn} Stone spaces of Aronszajn algebras do not have tunnels.
\end{thm}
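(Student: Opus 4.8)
The plan is to argue by contradiction. Suppose $K$, the Stone space of an Aronszajn algebra $\mathfrak{A}=\langle a_t:t\in T\rangle$, has a tunnel. Normalising, I may assume every generator $a_t$ is nonzero (otherwise delete $t$ and everything above it); and if $K$ has an isolated point there is nothing to prove, so assume it does not. Write $U_t=[a_t]$ for the basic clopen sets, so that $U_t\subseteq U_s$ when $s\le t$, $U_s\cap U_t=\varnothing$ when $s$ and $t$ are incomparable, and the $\alpha$-th level of $T$ is nonempty for every $\alpha<\omega_1$. The first observation is that $K$ is Corson compact: the map $x\mapsto(\chi_{U_t}(x))_{t\in T}$ is a continuous injection of $K$ into $2^{T}$, hence an embedding, and since $\{t:a_t\in x\}$ is a chain of $T$ (if $a_s,a_t\in x$ then $a_s\wedge a_t\in x$, so $a_s\wedge a_t\neq 0$ and $s,t$ are comparable) it is countable, $T$ having no uncountable branch; thus the range lies in the $\Sigma$-product $\Sigma(2^{T})$. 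Arguing then as in the proof of Lemma~\ref{Corson} (Theorem~\ref{Equiv}, together with the fact that a linearly ordered Corson compact space is metrizable), the tunnel gives a continuous surjection $f\colon K\longrightarrow L$ with nowhere dense fibres onto a \emph{metrizable} compact linearly ordered space $L$. Fix a compatible metric $d$ on $L$ and a countable dense set $D\subseteq L$.

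Since $U_t$ is a nonempty clopen set it is not nowhere dense, so it is not contained in any fibre of $f$; hence $f[U_t]$ is a closed set with at least two points and $\delta_t:=\operatorname{diam}f[U_t]>0$. As $T=\bigcup_{n}\{t:\delta_t>1/n\}$ and no level of $T$ is empty, a pigeonhole over the $\omega_1$ levels produces an $n_0\in\omega$ and a \emph{cofinal} set $A\subseteq\omega_1$ such that every level $\alpha\in A$ carries a node $t_\alpha$ with $\delta_{t_\alpha}>\varepsilon:=1/n_0$. For $\alpha\in A$ pick $p_\alpha,q_\alpha\in f[U_{t_\alpha}]$ with $d(p_\alpha,q_\alpha)>\varepsilon$, and then $\bar p_\alpha,\bar q_\alpha\in D$ with $d(\bar p_\alpha,p_\alpha),d(\bar q_\alpha,q_\alpha)<\varepsilon/10$. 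This colours $A$ by the countable set $D\times D$, so there are $\bar p,\bar q\in D$ and a cofinal $A^{*}\subseteq A$ with $(\bar p_\alpha,\bar q_\alpha)=(\bar p,\bar q)$ for all $\alpha\in A^{*}$. Now $d(\bar p,\bar q)>4\varepsilon/5$, so the closed balls of radius $\varepsilon/10$ about $\bar p$ and about $\bar q$ are disjoint; put $P:=f^{-1}\bigl(\overline{B(\bar p,\varepsilon/10)}\bigr)$ and $Q:=f^{-1}\bigl(\overline{B(\bar q,\varepsilon/10)}\bigr)$, disjoint closed subsets of $K$. For every $\alpha\in A^{*}$ we have $U_{t_\alpha}\cap P\neq\varnothing\neq U_{t_\alpha}\cap Q$, since $p_\alpha\in f[U_{t_\alpha}]\cap\overline{B(\bar p,\varepsilon/10)}$ and likewise for $q_\alpha$.

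Finally, $K$ being compact and zero-dimensional, separate $P$ and $Q$ by a clopen set $\mathscr P$ with $P\subseteq\mathscr P$ and $\mathscr P\cap Q=\varnothing$. Then $\mathscr P$ splits $U_{t_\alpha}$ for every $\alpha\in A^{*}$: both $U_{t_\alpha}\cap\mathscr P$ (which contains $U_{t_\alpha}\cap P$) and $U_{t_\alpha}\setminus\mathscr P$ (which contains $U_{t_\alpha}\cap Q$) are nonempty. On the other hand, $\mathscr P=[p]$ for an element $p\in\mathfrak{A}$ which is a Boolean combination of finitely many generators $a_{u_1},\dots,a_{u_k}$; let $\gamma_0$ be the largest of the levels of $u_1,\dots,u_k$. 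If $t$ lies on a level strictly above $\gamma_0$, then for each $i$ either $u_i<t$, so $a_t\le a_{u_i}$, or $u_i$ and $t$ are incomparable, so $a_t\wedge a_{u_i}=0$, i.e. $a_t\le\neg a_{u_i}$; hence $a_t$ lies below a single atom of the finite subalgebra generated by $a_{u_1},\dots,a_{u_k}$, and as $p$ is a join of such atoms, either $a_t\le p$ or $a_t\le\neg p$. That is, $\mathscr P$ does \emph{not} split $U_t$ for any $t$ on a level above $\gamma_0$. Picking $\alpha\in A^{*}$ with $\alpha>\gamma_0$ — possible because $A^{*}$ is cofinal — we find that $U_{t_\alpha}$ is both split and not split by $\mathscr P$, a contradiction.

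The crux is not a single delicate estimate but the choice of the right reduction and the right colouring. One must pass from the linearly ordered space $L$ to a \emph{metric}, hence separable, one — this is exactly where the Corson-ness of $K$, i.e. the absence of uncountable branches in $T$, enters — so that the $\omega_1$ levels of $T$ can be coloured with only countably many colours; and the colouring must be arranged so as to pin down a \emph{single} pair of disjoint closed sets $P,Q$ met by $U_t$ for cofinally many $t$. Once this is in place, the separating clopen $\mathscr P$ is forced to split $U_t$ on arbitrarily high levels, which is impossible because $\mathscr P$ is built from only finitely many generators and so is ``decided'' by $a_t$ as soon as $t$ is high enough. I expect the bookkeeping around these two pigeonhole steps to be the only place demanding care.
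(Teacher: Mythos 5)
Your proof is correct and reaches the contradiction by a genuinely different route than the paper, though both rest on the same key observation: any single clopen of $K$ is a Boolean combination of finitely many generators $a_{u_1},\dots,a_{u_k}$, hence is "decided" (contains or is disjoint from) by every $a_t$ with $t$ on a level strictly above the $u_i$'s. The paper exploits this in the cleaner direction: it applies Lemma~\ref{Corson} (i.e.\ Corollary~\ref{Countable-split}) to get a \emph{countable} splitting chain $\mathscr U$ of $F_\sigma$ open sets; since each is a countable union of clopens, all generators occurring lie below some countable level $\alpha$, whence no element of $\mathscr U$ splits $a_s$ for any $s$ above level $\alpha$ — a contradiction in one stroke. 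You instead keep the map $f\colon K\to L$ and work with a metric on $L$: a first pigeonhole on $\operatorname{diam} f[U_t]$, then a second on a countable dense set of $L$, pin down a single clopen $\mathscr P$ that splits $U_{t_\alpha}$ for cofinally many $\alpha$, contradicting bounded-height determinacy. Both routes are valid; yours avoids the $F_\sigma$-decomposition observation at the price of two pigeonhole steps and explicit metric estimates, so the paper's is shorter, but your version makes visible exactly where the Aronszajn structure and the absence of uncountable branches each enter. One small point worth flagging: your normalisation "delete $t$ and everything above it when $a_t=0$" could in principle collapse $T$ to countable height, in which case the normalised tree is no longer Aronszajn; but this degeneracy is implicit in the paper's own proof as well (the step "Consider now $s\in T$ of height greater than $\alpha$" silently presumes $a_s\ne 0$), so the intended reading is evidently that $a_t\ne 0$ for all $t$, making your normalisation a no-op.
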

\begin{proof}
	Let $T$ be an Aronszajn tree and let $\mathfrak{A}$ be the Boolean algebra generated by $T$. Let $K$ be the Stone space of $\mathfrak{A}$. Notice that  $K$ is Corson compact. Indeed, let $g\colon K \longrightarrow 2^T$ be given by $g(x)(t) = 1$ if and only if $t\in x$. It is plain to check that $g$ is a continuous embedding. Moreover, there is no $y\in f[K]$ of an uncountable support (since
	then $T$ would contain an uncountable branch).  If $K$ has a tunnel, then according to Lemma \ref{Corson}, $K$ has a countable splitting chain $\mathscr{U}$ of $F_\sigma$ open sets. Since each open set in $\mathscr{U}$ is $F_\sigma$, it is a countable union of clopen sets, and there is a countable ordinal $\alpha<\omega_1$ such that each element of $\mathscr{U}$ is in the algebra generated by the elements $a_t$ with $t$ of height less than $\alpha$. Consider now $s\in T$ of height greater than $\alpha$. Then $a_s$ is not split by any element of height less than $\alpha$ (each $a_t$ of height less than $\alpha$ either contains or is disjoint from $a_s$). Hence $a_s$ is not split by any element of $\mathscr{U}$, a contradiction.
\end{proof}

%Assume that $K$ is a compact zerodimensional space. We say that a family $\mathscr{T}$ of clopen subsets of $K$ is a \emph{tree of clopens} if it forms a tree with respect to $\subseteq$ with $K$ being a root. If $\mathscr{T}$ and $\mathscr{S}$ are tree of clopens then we say that  $\mathscr{T} \preceq \mathscr{S}$ if $\mathscr{T} \subseteq \mathscr{S}$ and no $S\in \mathscr{S}\setminus \mathscr{T}$ is split by $\mathscr{T}$. In other words, new elements of $\mathscr{S}$ can only end-extend branches of $\mathscr{T}$. It iseasy to see that the family of trees of clopens ordered by $\preceq$ satisfies the assumptions of Zorn's lemma, and so there is a $\preceq$-maximal tree of clopens of $K$. Clearly, $\mathscr{T}$ splits every nonempty clopen subset of $K$, otherwisesuch a clopen would be below a branch of $\mathscr{T}$ and we could extend $\mathscr{T}$ by any of its proper subsets.

We finish with a remark which indicates that seeking for a compact space without tunnels (and isolated points) we should rather focus on spaces with many disjoint open sets. Recall that a topological space is \emph{ccc} if it does not contain an uncountable
family of nonempty open subsets.

\begin{prop} Suslin Hypothesis is equivalent to the assertion that every ccc compact zerodimensional space without isolated points has a tunnel.
\end{prop}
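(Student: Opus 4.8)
Recall that the Suslin Hypothesis is equivalent to the non-existence of a Suslin tree, equivalently of a Suslin line. We prove the two implications separately, and for ``$\Leftarrow$'' we argue by contraposition. Assume there is a Suslin tree; after the usual reductions we may take a Suslin tree $T$ in which no node is maximal and every node has at least two immediate successors, so that in particular $T$ is an Aronszajn tree. Let $\mathfrak{A}$ be the Aronszajn algebra generated by $T$ as in the paragraph preceding Theorem~\ref{aronszajn}, with generators $a_t$ satisfying $a_t\leq a_s$ for $s\leq t$ and $a_s\wedge a_t=0$ for incomparable $s,t$, and let $K$ be its Stone space, a compact zerodimensional space. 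Every nonzero element of $\mathfrak{A}$ is a finite join of basic intersections $a_t\wedge\neg a_{s_1}\wedge\cdots\wedge\neg a_{s_n}$ (with each $s_i>t$), and a nonzero such basic intersection lies above $a_u$ for some $u\geq t$ incomparable to all the $s_i$, hence above $a_{u'}\vee a_{u''}$ for two incomparable immediate successors $u',u''$ of $u$; so $\mathfrak{A}$ is atomless and $K$ has no isolated points. If $\{b_\xi:\xi<\omega_1\}$ were an uncountable family of pairwise disjoint nonzero elements of $\mathfrak{A}$, shrinking each $b_\xi$ below to a single generator $a_{u_\xi}$ (by the remark just made) would produce an uncountable antichain $\{u_\xi\}$ of $T$, contradicting that $T$ is Suslin; so $\mathfrak{A}$ is ccc and $K$ is ccc. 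By Theorem~\ref{aronszajn} $K$ has no tunnel, so the asserted property fails, which proves ``$\Leftarrow$''.

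For ``$\Rightarrow$'', assume the Suslin Hypothesis and let $K$ be ccc, compact, zerodimensional and without isolated points; by Remark~\ref{clopen} it is enough to exhibit a splitting chain of clopen subsets of $K$. I would build one by a recursion that simultaneously grows an auxiliary tree $T$ of nonempty clopen subsets of $K$, ordered by reverse inclusion, with root $K$. At a node $R$ one fixes a maximal chain $\mathscr{C}_R$ of clopen subsets of $R$; by Theorem~\ref{Equiv} this induces a continuous surjection $f_R\colon R\longrightarrow L_R$ onto a linearly ordered compact space $L_R$. Since $R$ is ccc (being a clopen subspace of $K$), only countably many fibres of $f_R$ have nonempty interior, and inside the interior of each of these I pick a maximal pairwise disjoint family of nonempty clopen sets (again countable); the union of all these families, over all fibres of $f_R$ with nonempty interior, is declared to be the set of immediate successors of $R$, and at limit levels one continues each branch, when possible, by such a family inside the interior of the intersection along the branch. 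Every family of clopen sets occurring at a fixed level of $T$ is pairwise disjoint, so by ccc of $K$ every level of $T$ is countable and $T$ has no uncountable antichain; and an uncountable branch $R_0\supsetneq R_1\supsetneq\dots\supsetneq R_\alpha\supsetneq\dots$ would give the uncountable pairwise disjoint family $\{R_{\gamma-1}\setminus R_\gamma:\gamma<\omega_1\text{ a successor ordinal}\}$ of nonempty clopen sets, again contradicting ccc. Therefore, if $T$ had height $\omega_1$ it would be a Suslin tree, contradicting the Suslin Hypothesis; so $T$ has countable height, and in particular a node $R$ has no successors precisely when $\mathscr{C}_R$ already splits all nonempty clopen subsets of $R$.

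The remaining step, which I expect to be the genuinely hard part, is to assemble these local chains into a single splitting chain of clopen subsets of $K$. The point is that $\mathscr{C}_R$ has nowhere dense fibres except over the countably many ``bad'' points of $L_R$, whose fibre-interiors are, up to a nowhere dense set, covered exactly by the successor regions of $R$ (by maximality of the chosen disjoint families); so one wants to ``fill in'' each of these gaps of $\mathscr{C}_R$ by the chains already attached to the successor regions sitting inside it, coherently along $T$ — the countable height of $T$ (hence bounded branch length) and the compactness of $K$ being what keeps this under control. To make the inserted sets genuinely clopen in $K$ and pairwise comparable with the rest of the chain, one has to replace each maximal chain $\mathscr{C}_R$ by one engineered so that each successor region of $R$ lies in its own gap with a clopen ``floor'' below it; then the sets one splices in, of the form $(\text{floor})\cup(\text{a relative clopen of the successor region})$, are clopen in $K$ and comparable with everything else. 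Setting this up compatibly with the recursion, and verifying that the resulting chain of clopen sets of $K$ is indeed splitting, is the technical heart of the proof.
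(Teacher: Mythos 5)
Your argument for the direction ``existence of a Suslin tree $\implies$ failure of the tunnel property'' is correct and essentially the same as the paper's: pass to the branch space / Stone space of the Aronszajn algebra generated by a (pruned) Suslin tree, check that it is ccc and atomless, and invoke Theorem~\ref{aronszajn}. You are in fact slightly more careful than the paper, which does not explicitly verify atomlessness of the algebra $\mathfrak{A}$.

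The forward direction, however, has a genuine gap, and you flag it yourself: the ``assembly'' of the local chains $\mathscr{C}_R$ along the tree into a single splitting chain of clopen subsets of $K$ is never carried out, and the sketch you give (inserting each successor region into a clopen ``gap'' of the parent chain, coherently over a tree of countable height) is precisely the technical content that would need to be proved. Until that splicing is done and shown to produce a chain that is genuinely a $\subseteq$-chain of clopens and genuinely splitting, the proof is incomplete. Beyond that, there is a smaller issue in the tree construction itself: at limit levels you continue branches ``inside the interior of the intersection along the branch,'' but a decreasing $\omega$-intersection of clopens in $K$ need not be clopen and may have empty interior, so it is not clear what the continuation is, nor that the branch can be continued at all. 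The paper's proof sidesteps all of this. It does not attempt to build a splitting chain directly. Instead, by Zorn's lemma it fixes a \emph{maximal} family $\{a_t : t\in T\}$ of clopen subsets of $K$ indexed by a tree $T$, with $a_t\subseteq a_s$ for $s\leq t$ and $a_s\cap a_t=\varnothing$ for incomparable $s,t$. Under ccc, $T$ has countable levels and no uncountable branch; under $\mathsf{SH}$ it therefore cannot have height $\omega_1$, so $T$ (hence the subalgebra $\mathfrak{B}$ it generates) is countable, and a countable Boolean algebra is an interval algebra. Maximality of the family then forces $\mathfrak{B}$ to split every nonempty clopen of $K$: if some clopen $b$ were unsplit, take disjoint nonempty clopens $c,d\subseteq b$ (no isolated points) and observe that $\{a_t\}\cup\{c,d\}$ is again a tree family, contradicting maximality. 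Corollary~\ref{interval} then gives the tunnel directly. This avoids any explicit construction of a global chain. I would recommend replacing your $(\Rightarrow)$ argument with this one, or at minimum proving the splicing claim in full, since as written it is a statement of intent rather than a proof.
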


\begin{proof} ($\implies$) Suppose that $K$ is a ccc compact zerodimensional space without isolated points. Using Zorn's lemma, we can find a maximal family of clopen sets of the form $\{a_t\colon t\in T\}$ such that $T$ is a tree, $a_t \leq a_s$ when
	$s\leq t$ and $a_t\cap a_s = 0$ when $s$ and $t$ are incomparable. The algebra generated by this tree is countable, and is therefore an interval algebra. By Corollary \ref{interval}, it is enough to check that the elements $\{a_t\colon  t\in T\}$
	split all clopen subsets of $K$. So take $b$ a nonempty clopen set in $K$ that is not split by that family. Since we have no isolated points, we find two disjoint nonempty clopens $c,d\subseteq b$. Notice that $c,d\not\in\{a_t\colon  t\in T\}$ because
	they split $b$. The family $\{a_t\colon  t\in T\}\cup \{c,d\}$ would be a larger tree family, in contradiction with maximality.

($\impliedby$) Let $T$ be a Suslin tree, and let $[T]$ be the set of all (maximal) branches of $T$. For every $t$, let $a_t = \{x\in[T] : t\in x\}$, and let $\mathfrak{A}$ be the algebra of subsets of $[T]$ generated by the $a_t$. Since Suslin trees
are Aronszajn, we can use Theorem \ref{aronszajn}. It remains to show that $\mathfrak{A}$ is ccc. For this, notice that every nonempty element of $\mathfrak{A}$ contains nonempty element with atomic formula $a = \bigcap_{t\in R} a_t \setminus
\bigcup_{s\in S} a_s$. If we take a high enough node $r$ in a branch that belongs to $a$, then $a_r\subseteq a$. All this means that if there is an uncountable pairwise disjoint family in $\mathscr{A}$, then there is one made of elements of the form $a_r$, and that would give an uncountable family of pairwise incompatible elements of $T$, that contradicts that $T$ is Suslin.  
\end{proof}

\subsection{Ultraproducts of tunnels}
We assume some familiarity with the Banach space ultraproduct construction, as presented in \cite[Chapter~4]{aviles}, \cite{stern} or \cite{henson}.
Let $(K_i)_{i\in I}$ be a family of compact spaces indexed by $I$ and let $\mathscr U$ be a \emph{countably incomplete} ultrafilter on $I$. Then the Banach space ultraproduct $[C(K_i)]_\mathscr U$ is a Banach algebra under the product
$$
[(f_i)][(g_i)]=[(f_ig_i)].
$$
By general representation results, this algebra is isometrically isomorphic to one of the form $C(K)$ for some compact space $K$ which is called the topological ultracoproduct of the family $(K_i)$ following $\mathscr U$ and is denoted by $(K_i)^\mathscr U$. If all the $K_i$ coincide we speak of the ultracopower, instead.

\begin{prop}\label{prop:Ultra-have}
With the preceding notations, if each $K_i$ has a tunnel, then so $(K_i)^\mathscr U$ does.
\end{prop}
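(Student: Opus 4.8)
The plan is to reduce, using Theorem~\ref{Equiv}, to the task of producing a splitting chain of open sets in $K:=(K_i)^{\mathscr U}$. By Theorem~\ref{Equiv} each factor $K_i$ carries a splitting chain of open sets $\mathscr{C}_i$, and this is by definition linearly ordered by ``$\le$''. I would glue these together along the ultrafilter, indexing the combined chain by the product order $\prod_i\mathscr{C}_i$. For this I will use the elementary calculus of \emph{standard sets} in a topological ultracoproduct: a family $(U_i)$ of open subsets $U_i\subseteq K_i$ determines a canonical open set $(U_i)^{\mathscr U}\subseteq K$, and a family of closed sets a canonical closed set; every nonempty open subset of $K$ contains a nonempty set of the form $(U_i)^{\mathscr U}$; one has $(U_i)^{\mathscr U}=\varnothing$ exactly when $\{i:U_i=\varnothing\}\in\mathscr U$; these sets commute with finite intersections, unions and complements; $(A_i)^{\mathscr U}\subseteq(B_i)^{\mathscr U}$ whenever $\{i:A_i\subseteq B_i\}\in\mathscr U$ (including the mixed case in which one of the two families is closed); and the closure of $(U_i)^{\mathscr U}$ equals $(\overline{U_i})^{\mathscr U}$. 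Setting $G_{\bar U}:=(U_i)^{\mathscr U}$ for $\bar U=(U_i)\in\prod_i\mathscr{C}_i$, I would prove that $\mathscr{G}:=\{G_{\bar U}:\bar U\in\prod_i\mathscr{C}_i\}$ is a splitting chain of open sets in $K$, and then Theorem~\ref{Equiv} gives the conclusion.

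That $\mathscr{G}$ is a chain uses only that an ultraproduct of linear orders is linearly ordered. Given $\bar U,\bar V\in\prod_i\mathscr{C}_i$, for $\mathscr U$-almost every $i$ one of $U_i=V_i$, $\overline{U_i}\subseteq V_i$, $\overline{V_i}\subseteq U_i$ holds, since distinct comparable members of a chain of open sets satisfy one of the last two (indeed $U\le V$ and $U\neq V$ force $U<V$). In the first case $G_{\bar U}=G_{\bar V}$; in the second, $\overline{G_{\bar U}}=(\overline{U_i})^{\mathscr U}\subseteq(V_i)^{\mathscr U}=G_{\bar V}$, that is $G_{\bar U}<G_{\bar V}$; the third case is symmetric. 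Hence $\mathscr{G}$ is linearly ordered by ``$\le$''.

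For the splitting property I would take a nonempty open $W\subseteq K$; replacing it by a smaller set, I may assume $W=(W_i)^{\mathscr U}$ with $J:=\{i:W_i\neq\varnothing\}\in\mathscr U$. For $i\in J$, Lemma~\ref{spli} applied to the splitting chain $\mathscr{C}_i$ and the nonempty open set $W_i$ gives $U_i\in\mathscr{C}_i$ with $W_i\cap U_i\neq\varnothing$ and $W_i\setminus\overline{U_i}\neq\varnothing$; for $i\notin J$ pick $U_i\in\mathscr{C}_i$ arbitrarily. Then $W\cap G_{\bar U}=(W_i\cap U_i)^{\mathscr U}\neq\varnothing$, and since $G_{\bar U}\subseteq\overline{G_{\bar U}}=(\overline{U_i})^{\mathscr U}$ we also get $W\setminus G_{\bar U}\supseteq W\setminus(\overline{U_i})^{\mathscr U}=(W_i\setminus\overline{U_i})^{\mathscr U}\neq\varnothing$; thus $G_{\bar U}$ splits $W$. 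Therefore $\mathscr{G}$ is a splitting chain of open sets, and $K=(K_i)^{\mathscr U}$ has a tunnel by Theorem~\ref{Equiv}.

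The genuinely non-formal ingredient — and the step I expect to require the most care — is the calculus of standard sets recalled in the first paragraph, in particular the facts that every nonempty open set of $K$ contains a nonempty standard open set and that the closure of a standard open set is the standard closed set built from the closures. This belongs to the basic theory of topological ultracoproducts; granting it, the rest of the argument is a mechanical bookkeeping with $\mathscr U$-large index sets and the trichotomy for ultraproducts of linear orders.
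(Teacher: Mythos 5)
Your proof is correct, and it takes a genuinely different route from the paper's. You reduce via Theorem~\ref{Equiv} to producing a \emph{splitting chain} of open sets in $K=(K_i)^{\mathscr U}$ and then invoke Lemma~\ref{spli} fibrewise, leaning on the elementary ``standard set'' calculus of topological ultracoproducts ($(U_i)^{\mathscr U}$, commutation with finite Boolean operations, every nonempty open contains a nonempty standard open, $\overline{(U_i)^{\mathscr U}}\subseteq(\overline{U_i})^{\mathscr U}$). The paper instead works on the Banach-algebra side throughout: it identifies open sets with closed ideals $J_U$, forms $[J_{U_i}]_{\mathscr U}$, proves the chain trichotomy by producing explicit $f_i$ witnessing $\overline{U}\subseteq V$, and then — rather than appeal to splitting — verifies the tunnel property \emph{directly} by showing that diagonal points $\langle(p_i)\rangle$ with $p_i\in\partial U_i$ land in $\partial W$, together with a strengthened density statement (every nonempty zero set of $(K_i)^{\mathscr U}$ meets $\langle D_i\rangle_{\mathscr U}$, which is where countable incompleteness enters). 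In effect the underlying chain is the same in both arguments; the difference is that you transfer the verification to the splitting-chain characterization, which sidesteps the need to locate actual boundary points in the ultracoproduct, while the paper re-derives inline the particular ultracoproduct facts it needs rather than deferring to a general dictionary. Two small remarks: you only ever use the trivial inclusion $\overline{(U_i)^{\mathscr U}}\subseteq(\overline{U_i})^{\mathscr U}$ (since $(\overline{U_i})^{\mathscr U}$ is closed and contains $(U_i)^{\mathscr U}$), so the full closure equality you list is more than you need; and the facts you cite from the standard-set calculus do hold without invoking countable incompleteness (it is the paper's stronger zero-set claim that genuinely uses it), so your route is, if anything, a bit lighter on hypotheses than the paper's.
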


\begin{proof}%[Sketch]
We need to translate our topological notions from $K$ to the algebra $C(K)$. The basic idea is that each open subset $U$ of $K$ gives rise to a closed ideal just taking
$$
J_U=C_0(U)=\{f\in C(K): f|_{K\backslash U}=0\},
$$
and, conversely, all closed ideals of $C(K)$ have this form. 
On the other hand, the condition $f|_{K\backslash U}=1$ is equivalent to the class of $f$ being the unit of the quotient algebra $C(K)/J_U$.

Thus, the fact that $U$ and $V$ are open subsets of $K$ with $\overline{U}\subseteq V$, which is obviously equivalent to the existence of $f\in C(K)$ such that $f|_U=0$ and $f|_{K\backslash V}=1$ can be stated as:
\begin{itemize}
\item There is $f\in C(K)$ such that $fg=0$ for all $g\in J_U$ and whose class is the unit of $C(K)/J_V$.
\end{itemize}

Now assume that each $K_i$ has a tunnel $\mathscr W_i$. We construct a family of open sets $\mathscr W$ of $(K_i)^\mathscr U$ as follows. For each $i\in I$ we pick $U_i\in \mathscr W_i$, then we consider the corresponding ideal $J_{U_i}\subseteq C(K_i)$ and form the ultraproduct $[J_{U_i}]_\mathscr U$. Quite clearly, $[J_{U_i}]_\mathscr U$ is a closed ideal in $[C(K_i)]_\mathscr U$ and, by the preceding remarks, this ideal determines a certain open set $W$ of $(K_i)^\mathscr U$. 
 Let us check that the family of open sets of $(K_i)^\mathscr U$ obtained in this way forms a tunnel.  

First, we prove that they form a chain. Take two families $(U_i), (V_i)$, with $U_i, V_i\in \mathscr W_i$ and let $W$ and $V$ be the corresponding subsets of $(K_i)^\mathscr U$. We partition $I$ into three subsets as follows:
\begin{itemize}
\item $I_{\natural}=\{i\in I: U_i=V_i\}$;
\item $I_{\flat}=\{i\in I: \overline{U_i}\subseteq V_i\}$;
\item $I_{\sharp}=\{i\in I: \overline{V_i}\subseteq U_i\}$.
\end{itemize}
Then exactly one of these sets belongs to $\mathscr U$.
If $I_{\natural}$ belongs to $\mathscr U$, then $W=V$. 
Now assume $I_{\flat}$ belongs to $\mathscr U$ and let us prove that $\overline{W}\subseteq V$. For each $i\in I_{\flat}$, take a continuous $f_i:K_i\to[0,1]$ such that 
$f_i|_{U_i}=0$ and $f_i|_{K_i\backslash V_i}=1$. If $i\notin I_{\flat}$, set $f_i=1$.
Let us take a look at $[(f_i)]$. It is clear that the class of $[(f_i)]$ in 
$$[C(K_i)]_\mathscr U\big{/}[J_{V_i}]_\mathscr U= [C(K_i){/}J_{V_i}]_\mathscr U $$
is the unit of the quotient algebra. On the other hand, if $(g_i)$ is a bounded family such that $g_i\in J_{U_i}$ for all $i\in I$, then $[(f_i)][(g_i)]= [(f_ig_i)]=0$ since $f_ig_i=0$ at least for $i\in I_{\flat}$. This shows that
$\overline{W}\subseteq V$. Finally, if $I_{\sharp}\in\mathscr U$, then $\overline{V}\subseteq W$.

To complete the proof we need to manage some points of $(K_i)^\mathscr U$, that is, some ``nice'' maximal ideals of $[C(K_i)]_\mathscr U$.

Let $(p_i)_i$ be a family such that $p_i\in K_i$ for each $i\in I$. Then we can define a unital homomorphism $[C(K_i)]_\mathscr U\to \mathbb R$ by the formula
\begin{equation}\label{eq:we agree}
[(f_i)]\longmapsto \lim_{\mathscr U(i)} f_i(p_i).
\end{equation}
The definition makes sense and, moreover, two families $(p_i)_i$ and $(q_i)_i$ induce the same homomorphism if and only if they represent the same element in the set-theoretic ultraproduct $\langle K_i\rangle_\mathscr U$, that is, when the set
$\{i\in I: p_i=q_i\}$ belongs to $\mathscr U$. If we agree to denote by $\langle (p_i)\rangle$ the  ``point'' of $(K_i)^\mathscr U$ associated to (\ref{eq:we agree}) (as in the Gelfands representation theorem, see e.g. \cite[Theorem
4.2.1]{Albiac-Kalton}), then we have an injective mapping $\langle K_i\rangle_\mathscr U\to (K_i)^\mathscr U$. This mapping is known to have dense range. We need a slightly stronger fact:
\medskip

\noindent{\bf Claim.} If, for each $i$, the set $D_i$ is dense in $K_i$, then every nonempty zero set of  $(K_i)^\mathscr U$ meets $\langle D_i\rangle_\mathscr U$. In particular, 
$\langle D_i\rangle_\mathscr U$ is dense in $(K_i)^\mathscr U$.
\medskip

\begin{proof}[Proof of the Claim]
The second assertion clearly follows from the first one since, by the very definition, $(K_i)^\mathscr U$ is a completely regular space.

So, let us check the first statement. The hypothesis  that $\mathscr U$ is countably incomplete is used as follows: there is a function $\delta\colon I\to(0,\infty)$ such that $\delta(i)\longrightarrow 0$ along $\mathscr U$. Now, take a non-negative, continuous
$f\colon (K_i)^\mathscr U\to \mathbb R$ with $f^{-1}(0)\neq \varnothing$. Write $f=[(f_i)]$, with $f_i\geq 0$ in $C(K_i)$ and put
$$
m(i)=\min_{x\in K_i} f_i(x) = \inf_{x \in D_i} f_i(x).
$$
Note that $m(i)\longrightarrow 0$ along $\mathscr U$ since otherwise $f$ would be invertible. Take $\delta$ as before and, for each $i\in I$, choose $x_i\in D_i$ so that $f_i(x_i)<m(i)+\delta(i)$. Then $f$ vanishes on the point $\langle(x_i)\rangle$ and the Claim is proved. 
\end{proof}

The proof will be complete if we show that if for each index $i$ the set $U_i$ is  open 
in $K_i$ and $p_i\in\partial U_i$, then $\langle (p_i)\rangle\in\partial W$, where $W$ is the open set of $(K_i)^\mathscr U$ attached to the family $(U_i)$ -- that is, to the ideal $[J_{U_i}]_\mathscr U$.

To do this we add two new entries to our basic dictionary: suppose $U$ is an open set in a compactum $K$ and that $p\in K$. Then:
\begin{itemize}
\item $p\notin U$ is equivalent to the statement ``for every $f\in J_U$ one has $f(p)=0$''.
\item $p\in \overline{U}$ is equivalent to the statement ``for $g\in C(K)$ such that $g(p)=1$ there is $q\in U$ such that $g(q)\geq {1\over 2}$''.% $gf=0$ for all $f\in J_U$ one has $g(p)=0$''.
%\item $p\in \overline{U}$ is equivalent to the statement ``for every $g\in C(K)$ such that $gf=0$ for all $f\in J_U$ one has $g(p)=0$''.
\end{itemize}

Now, if $p_i, U_i, W$ are as before, then $p_i\in\overline{U_i}\setminus U_i$.
Since every $f\in J_W$ can be written as $[(f_i)]$, with $f_i\in J_{U_i}$, we have
$$
f(\langle (p_i)\rangle)= \lim_{\mathscr U(i)}f_i(p_i)=0,
$$
so, certainly, $\langle (p_i)\rangle\notin W$.

Finally, we check that  $\langle (p_i)\rangle\in \overline{W}$. We first remark that $W$ contains every point of the form $\langle (q_i)\rangle$ with $q_i\in U_i$ for all $i\in I$ (think of functions $f_i$ such that $f_i(q_i)=1$ and $f_i\in J_{U_i}$). Now, 
if $f$ is a continuous function on $(K_i)^\mathscr U$ such that $f(\langle (p_i)\rangle)= 1$, then writing $f=[(f_i)]$ and recalling that $p_i\in \overline{U_i}$ we can pick $q_i$ such that $f_i(q_i)\geq {1\over 2} f_i(p_i)$, so $f(\langle (q_i)\rangle)\geq  {1\over 2}$, so $\langle (p_i)\rangle\in \overline{W}$.
\end{proof}

We now give the application that motivated our interest in ultracoproducts. We need some basic facts from model theory in the context of Banach spaces.
The reader can take a look to \cite{stern} or \cite{henson} for the general background and to \cite{bankston} for a more topological approach.

 Following the uses in model theory,
let us say that two compact spaces $K$ and $L$ are co-elementarily equivalent if there are ultrafilters $\mathscr U$ and $\mathscr V$ such that $K^\mathscr U$ and $L^\mathscr V$ are homeomorphic, equivalently, the Banach algebras $(C(K))_\mathscr U$ and 
$(C(L))_\mathscr V$ are isomorphic. This happens if and only if  the underlying Banach spaces $(C(K))_\mathscr U$ and 
$(C(L))_\mathscr V$ are (linearly) isometric. This roughly means that the base Banach spaces $C(K)$ and $C(L)$ ``approximately'' satisfy the same positive bounded sentences (in a suitable signature); see \cite[Chapter 5]{henson}.

Thus, the following result explains in part why it is so difficult to find compacta (without isolated points) having no tunnel.

\begin{prop}
Let $K$ be a compact space. The following conditions are equivalent:
\begin{itemize}
\item[(a)] $K$ has no isolated points.
\item[(b)] There is an ultrafilter $\mathscr U$ (on some index set) for which the ultracopower $K^\mathscr U$ has a tunnel.
\item[(c)] $K$ is co-elementarily equivalent to a compactum having a tunnel.
\end{itemize}
\end{prop}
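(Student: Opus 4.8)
The plan is to prove the cycle (a)$\implies$(b)$\implies$(c)$\implies$(a), using the ultracoproduct machinery from Proposition~\ref{prop:Ultra-have} and the elementary facts recorded just before the statement.

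\smallskip
\noindent\textbf{(b)$\implies$(c).} This is essentially a definitional matter. If $\mathscr U$ is an ultrafilter for which $K^\mathscr U$ has a tunnel, then $K$ and $K^\mathscr U$ are co-elementarily equivalent (witnessed by $\mathscr U$ on the one side and a principal/trivial ultrafilter, or simply $\mathscr U$ again, on the other, since an ultracopower of an ultracopower is an ultracopower), and $K^\mathscr U$ is the required compactum with a tunnel. So I would just unwind the definition of co-elementary equivalence and cite Proposition~\ref{prop:Ultra-have} is not even needed here.

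\smallskip
\noindent\textbf{(c)$\implies$(a).} Here I would argue contrapositively: if $K$ has an isolated point, then $K$ has no tunnel (as observed right after the definition of tunnel), and I must show the same for every compactum co-elementarily equivalent to $K$. The cleanest route is to observe that ``having an isolated point'' is preserved and reflected by the ultracoproduct construction: $K$ has an isolated point if and only if $C(K)$ contains a nonzero idempotent $e$ that is \emph{minimal} (i.e. $eC(K)e$ is one-dimensional), and this is an approximately finitely axiomatizable property of the Banach algebra $C(K)$, hence is shared by $(C(K))_\mathscr U\cong C(K^\mathscr U)$. Concretely: if $p\in K$ is isolated, the characteristic function $\chi_{\{p\}}$ is such an $e$; conversely a minimal idempotent in $C(K)$ is the characteristic function of an isolated point. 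Since co-elementarily equivalent compacta have isometric (indeed isomorphic as algebras) ultrapowers of their $C(K)$, and the existence of a minimal idempotent transfers both ways through ultrapowers, $K$ has an isolated point iff any co-elementarily equivalent compactum does. Now if $K$ were co-elementarily equivalent to a compactum $L$ having a tunnel, then $L$ has no isolated point, hence $K$ has no isolated point, which is (a).

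\smallskip
\noindent\textbf{(a)$\implies$(b).} This is the substantive implication and the main obstacle. Assume $K$ has no isolated points. I would take $I=\omega$ (or any set admitting a countably incomplete ultrafilter) with a countably incomplete ultrafilter $\mathscr U$, set $K_i=K$ for all $i$, and aim to exhibit a tunnel in the ultracopower $K^\mathscr U=(K_i)^\mathscr U$. The difficulty is that, even though $K$ has no isolated points, it need not have a tunnel, so I cannot simply pull one up via Proposition~\ref{prop:Ultra-have}. Instead I would build a tunnel of $K^\mathscr U$ directly, indexed by a dense linear order without endpoints such as $\mathbb Q$ (or $\mathbb Q\cap(0,1)$): for each rational $q$ I want an open set $W_q$ of $K^\mathscr U$ with $q<q'\Rightarrow \overline{W_q}\subseteq W_{q'}$, and with $\bigcup_q\partial W_q$ dense. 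The idea is that in $C(K)$, because $K$ has no isolated points, I can find a single function $h\colon K\to[0,1]$ with dense range and with every fiber $h^{-1}(t)$ nowhere dense --- this is exactly a metrizable-valued map with nowhere dense fibers, which exists whenever $K$ has no isolated points and, say, weight controlled; but in general weight may be large, so instead I would use countable incompleteness more cleverly. The right statement to prove is: \emph{for a compactum $K$ without isolated points, there is a sequence $(h_i)$ of functions $h_i\colon K\to[0,1]$ (here all equal to a fixed $h$ is not enough) such that the ultraproduct function $[(h_i)]\colon K^\mathscr U\to[0,1]$ has nowhere dense fibers and dense range}; then $W_q=\{[(h_i)]<q\}$ works by the argument of Corollary~\ref{Countable-split}, and Theorem~\ref{Equiv} converts the resulting continuous map with nowhere dense fibers into a tunnel. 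To get such an $h$: since $K$ has no isolated points, for each $n$ fix a finite partition $\mathscr P_n$ of $K$ into nonempty clopen-or-at-least-regular-open pieces of ``small'' size in the sense that each piece meets enough of the space --- more precisely, recursively refine so that $\mathscr P_{n+1}$ properly subdivides each member of $\mathscr P_n$ into at least two nonempty pieces (possible since no isolated points, using that every nonempty open set in a space without isolated points splits into two nonempty open sets); assign dyadic-rational values coherently to build $h=\lim h_n$ uniformly. The uniform limit has dense range in $[0,1]$ and each fiber $h^{-1}(t)$ is contained in $\bigcap_n(\text{one piece of }\mathscr P_n)$, which has empty interior because at each stage the piece was properly subdivided. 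Then $h$ already gives $K$ a tunnel --- so in fact \emph{this} shows directly that every compactum without isolated points has a tunnel-like structure with values in $[0,1]$, and no ultracoproduct is needed for (a)$\implies$(b) at all! I would double-check this: the potential gap is whether ``properly subdivide every nonempty open set into two nonempty open sets'' can be iterated to make $\bigcap_n P_n$ have empty interior for \emph{every} branch of the subdivision tree; this requires that at stage $n$ the subdivision separates points well enough, which for general $K$ (not second countable) it need not along a fixed countable tower. This is precisely where countable incompleteness of $\mathscr U$ saves the day in the ultracoproduct: the fibers of $[(h_i)]$ in $K^\mathscr U$ are automatically nowhere dense by the Claim in the proof of Proposition~\ref{prop:Ultra-have} (dense sets go to dense sets, and one checks a fiber has empty interior because a function vanishing on it would be invertible off a $\mathscr U$-small set, contradicting that its pieces were properly subdivided for $\mathscr U$-many $i$). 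So the correct and safe argument for (a)$\implies$(b) is: choose the $h_i$ so that for each $i$ the partition tree of $K_i=K$ is subdivided at least $i$ times (using no isolated points), let $h=[(h_i)]$, and invoke the Claim together with countable incompleteness exactly as in Proposition~\ref{prop:Ultra-have} to conclude the fibers of $h$ in $K^\mathscr U$ are nowhere dense with dense range; then Theorem~\ref{Equiv}(c)$\Rightarrow$(a) gives the tunnel. The main obstacle, to restate it, is arranging nowhere-dense fibers for the ultraproduct map without assuming $K$ is second countable, and the resolution is to let the depth of subdivision grow along the index set and lean on the countable incompleteness of $\mathscr U$ precisely as in the Claim.
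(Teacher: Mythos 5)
Your breakdown into the cycle (a)$\implies$(b)$\implies$(c)$\implies$(a) matches the paper, and your (c)$\implies$(a) via minimal idempotents is exactly the paper's argument. Your (b)$\implies$(c) is also fine, and in fact uses a slightly more elementary route than the paper: you observe that an ultracopower of an ultracopower is again an ultracopower (via a product ultrafilter), so $K$ and $K^{\mathscr U}$ are co-elementarily equivalent directly from the definition, whereas the paper invokes the Banach-space Keisler--Shelah theorem. Either works.

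The gap is in (a)$\implies$(b). You propose to bypass the L\"owenheim--Skolem step by taking $h_i\colon K\to[0,1]$ built from a depth-$i$ binary subdivision tree (so each $h_i$ has finite range, with values the dyadic rationals of denominator $2^i$), and then invoking countable incompleteness and the Claim from Proposition~\ref{prop:Ultra-have} to argue $[(h_i)]$ has nowhere dense fibers in $K^{\mathscr U}$. This does not work. Each $h_i$ has finite range, so all of its fibers are nonempty open sets of $K$; you cannot make them shrink to points because $K$ need not be second countable, and deepening the tree as $i$ grows does not help, because the obstruction passes to the ultraproduct. Concretely, take $K=2^{\omega_1}$, a nonprincipal $\mathscr U$ on $\omega$, and let $h_i$ depend only on coordinates $\{0,\dots,i-1\}$ (the natural choice for a depth-$i$ dyadic subdivision). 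Put $V_i=\{x\in K\colon x(j)=0\text{ for }j<i\}$; these are nonempty clopen sets on which $h_i\equiv 0$, so the open set $W$ of $K^{\mathscr U}$ determined by the ideal $[J_{V_i}]_{\mathscr U}$ is nonempty, and $[(h_i)]$ vanishes on the dense subset $\langle V_i\rangle_{\mathscr U}$ of $W$, hence on all of $W$ by continuity. Thus $[(h_i)]^{-1}(0)$ has nonempty interior, and your candidate map fails to have nowhere dense fibers. The vague remark that ``a function vanishing on it would be invertible off a $\mathscr U$-small set'' does not produce a contradiction, because nothing forces the $g_i$ realizing such a function to interact with the subdivision pieces in the way you want. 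The missing ingredient is precisely the Banach-space downward L\"owenheim--Skolem theorem used by the paper: find a \emph{metrizable} compactum $M$ with $K^{\mathscr U}\cong M^{\mathscr V}$; then $M$ has no isolated points (by your idempotent argument), hence has a tunnel by Corollary~\ref{cor:metriz-have}, and Proposition~\ref{prop:Ultra-have} lifts this to a tunnel on $M^{\mathscr V}\cong K^{\mathscr U}$. That is the only place the tunnel comes from, and it is exactly what your direct construction cannot manufacture for large $K$.
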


\begin{proof}
%(Uses model theory of Banach spaces)
We first remark that the property of (not) having isolated point is preserved under co-elementary equivalence. This is implied by the following two facts:
\begin{itemize}
\item A compact space $K$ has an isolated point if and only if the algebra $C(K)$ has a ``minimal idempotent'': a non-zero $f\in C(K)$ such that $f^2=f$ with the property that if $0\leq g\leq f$, then $g=cf$ for some $c\in\mathbb R$. 
\item Every idempotent in $[C(K_i)]_\mathscr U$ can be written as $[(f_i)]$, where $f_i$ is an idempotent of $C(K_i)$.
\end{itemize}

We thus have (c)$\implies$(a). Next, we prove (a)$\implies$(b). The key fact is that every compactum is co-elementarily equivalent to some metrizable compact space. This follows from the Banach space version of the  (``downward'') L\"owenheim--Skolem theorem; see \cite[Theorem 2.2]{stern} or \cite[9.13 Proposition]{henson}. Suppose $K$ has no isolated points and let $M$ be a metrizable compactum such that $K^\mathscr U$ and $M^\mathscr V$ are homeomorphic, where $\mathscr U$ and $\mathscr V$ are ultrafilters on suitably chosen sets of indices. By the preceding remark, neither $K^\mathscr U$ nor $M$ have isolated points. By Corollary~\ref{cor:metriz-have}, $M$ has a tunnel and so $M^\mathscr V$ and $K^\mathscr U$ have, by Proposition~\ref{prop:Ultra-have}.

(b)$\implies$(c) Each compact space is co-elementarily equivalent to its ultracopowers, by the Banach space version of the Keisler--Shelah (``ultrapower'') theorem; see \cite[Theorem 2.1]{stern} or \cite[10.7 Theorem]{henson}.
\end{proof}

\section{Twisted sums} \label{derivation2}
Recall that a short exact sequence is a diagram of Banach spaces and (linear, bounded) operators
\begin{equation}\label{diag:yzx}
\begin{CD}
0@>>> Y @>\imath>> Z @>\pi>> X@>>> 0
\end{CD}
\end{equation}
in which the kernel of each arrow agrees with the range of the preceding one.
We say that a short exact sequence is trivial
if there is an operator $\varpi\colon Z\to Y$ such that $\varpi\circ\imath={\bf I}_Y$ (or, equivalently, there is an operator $\jmath\colon X\to Z$ such that $\pi\circ\jmath={\bf I}_Z$). Note that (\ref{diag:yzx}) is trivial if and only if 
$\imath[Y]$ is a complemented subspace of $Z$. In this case the space $Z$ is linearly homeomorphic to the direct sum $Y\oplus X$. Simple examples show that the converse is not true.%\footnote{Things are so...}

The space $C(K)$ can be viewed as a subspace (or a subalgebra) of $\ell_\infty(K)$,
the Banach algebra of all bounded functions $f\colon K\to \mathbb{R}$, again with the  supremum norm. 

Given a family $\mathscr A$ of subsets of $K$, we define an intermediate space $C(K) \subseteq  X(\mathscr A) \subseteq \ell_\infty(K)$ as the Banach space generated by $C(K)$ and by the characteristic functions of the sets in $A$. This produces a short exact sequence
\begin{equation}\label{CKXA}
\begin{CD}
0@>>> C(K) @>\imath >> X(\mathscr A) @>\pi >> X(\mathscr A)/C(K) @>>> 0,
\end{CD}
\end{equation}
in which $\imath$ is the inclusion map and $\pi$ is the natural quotient map.
This was the approach followed by Amir \cite{amir}. For this sequence to provide a relevant example, we must  ensure that it is not trivial, and we must identify what the quotient $X(\mathscr A)/C(K)$ is. Lemmas \ref{quotientc0} and \ref{derivationenough} will deal with these issues.

Recall that the oscillation of a  function $f\colon K \longrightarrow \mathbb{R}$ at a point $x\in K$ is defined by
	$$\osc f(x) = \inf_V\sup_{y,z\in V} \big{(}f(y)-f(z) \big{)},$$
	%\limsup_{y\to x}f(y)-\liminf_{y\to x}f(y).$$
	where $V$ runs over the neighborhoods of $x$ in $K$. The oscillation of an arbitrary function $f$ on $K$ is the number  $\mathrm{osc} f = \sup_{x\in K} \mathrm{osc} f(x)$.% an oscillation of a function.

\begin{lem}\label{quotientc0} Let $\mathscr A$ be a family of subsets of $K$.
	If  the boundaries of sets in $\mathscr A$ are all nonempty and pairwise disjoint, then the quotient space $X(\mathscr A)/C(K)$ is isometric to $c_0(\mathscr A)$.
\end{lem}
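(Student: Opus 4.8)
The plan is to produce an explicit surjective isometry $X(\mathscr A)/C(K)\to c_0(\mathscr A)$ by reading off, from a function, the ``sizes of its jumps'' along a fixed transversal of the boundaries. First I would choose, for every $A\in\mathscr A$, a point $b_A\in\partial A$; this is possible since $\partial A\neq\varnothing$, and $A\mapsto b_A$ is injective because the boundaries are pairwise disjoint. Let $X_0$ be the (dense) linear span of $C(K)$ together with all $\chi_A$, $A\in\mathscr A$, and define a linear map $\tau\colon X_0\to c_0(\mathscr A)$ by
\[
\tau\Big(g+\sum_{A\in F}\lambda_A\chi_A\Big)=(\lambda_A)_{A\in\mathscr A},\qquad g\in C(K),\ F\subseteq\mathscr A\ \text{finite},
\]
with the convention $\lambda_A=0$ for $A\notin F$, so that the image is finitely supported and hence lies in $c_0(\mathscr A)$. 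The first thing to verify is that $\tau$ is well defined, i.e.\ that the coefficients are determined by the function. This is where pairwise disjointness enters: fixing $A_0\in F$ and working near $b_{A_0}$, each $\chi_A$ with $A\in F\setminus\{A_0\}$ is constant on a neighbourhood of $b_{A_0}$ (because $b_{A_0}\notin\partial A$, so $b_{A_0}$ lies in $\mathrm{int}\,A$ or in $K\setminus\overline A$), while $g$ is continuous there; hence $\osc\big(g+\sum_{A\in F}\lambda_A\chi_A\big)(b_{A_0})=|\lambda_{A_0}|\,\osc\chi_{A_0}(b_{A_0})=|\lambda_{A_0}|$. In particular a function in $X_0$ that is identically $0$ forces all $\lambda_A=0$ (and then $g=0$), so $\tau$ is a well-defined linear map, and it visibly vanishes on $C(K)$.

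The heart of the argument is the norm identity
\[
\dist\Big(\sum_{A\in F}\lambda_A\chi_A,\,C(K)\Big)=\tfrac12\max_{A\in F}|\lambda_A|
\]
for every finite $F\subseteq\mathscr A$; note that adding a $g\in C(K)$ does not affect the left side. For the lower bound I would use the elementary fact that $\osc f\le 2\,\dist(f,C(K))$ for any bounded $f$ on $K$ (if $g\in C(K)$ then $\osc f(x)=\osc(f-g)(x)\le 2\|f-g\|_\infty$), combined with $\osc(\sum_{A\in F}\lambda_A\chi_A)(b_{A_0})=|\lambda_{A_0}|$ from the previous paragraph; choosing $A_0$ realising the maximum gives ``$\ge$''. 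For the upper bound one first checks that $\osc(\sum_{A\in F}\lambda_A\chi_A)\le M:=\max_{A\in F}|\lambda_A|$ at \emph{every} point of $K$: the only points of nonzero oscillation lie in some (unique) $\partial A$ with $A\in F$, where the oscillation is exactly $|\lambda_A|\le M$. Writing $h=\sum_{A\in F}\lambda_A\chi_A$ and letting $h^{*}$, $h_{*}$ be its upper and lower semicontinuous envelopes, one has $h_{*}\le h\le h^{*}$ and $h^{*}-h_{*}=\osc h\le M$ pointwise; then $h^{*}-\tfrac M2$ is upper semicontinuous, $h_{*}+\tfrac M2$ is lower semicontinuous, and their difference is $\osc h-M\le 0$, so the Kat\v etov--Tong insertion theorem (applicable because the compact Hausdorff space $K$ is normal) yields $g_0\in C(K)$ with $h^{*}-\tfrac M2\le g_0\le h_{*}+\tfrac M2$, whence $\|h-g_0\|_\infty\le\tfrac M2$ and ``$\le$'' follows.

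Granting the identity, the rest is formal. It says precisely that $\|\tau(h)\|_{c_0(\mathscr A)}=2\,\|h+C(K)\|_{X(\mathscr A)/C(K)}$ for $h\in X_0$; in particular $\tau$ is bounded (with $\|\tau\|\le 2$), so it extends to a bounded operator $\bar\tau\colon X(\mathscr A)\to c_0(\mathscr A)$ (the finitely supported sequences are dense in $c_0(\mathscr A)$), still vanishing on $C(K)$, hence factoring through the quotient. By continuity the relation $\|\bar\tau(h)\|=2\|h+C(K)\|$ persists on all of $X(\mathscr A)$, so $\tfrac12\bar\tau$ induces an isometric embedding $T\colon X(\mathscr A)/C(K)\to c_0(\mathscr A)$. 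Its range is closed (an isometric image of a Banach space; recall $C(K)$ is closed in $X(\mathscr A)$) and contains every $\tfrac12 e_A=T(\chi_A+C(K))$, hence is dense, hence all of $c_0(\mathscr A)$, so $T$ is the desired surjective isometry.

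The step I expect to be the main obstacle is the upper bound in the norm identity. Unlike the lower bound, it is genuinely \emph{global}: the optimal continuous approximant to $\sum_{A\in F}\lambda_A\chi_A$ need not coincide with it off a neighbourhood of the boundaries (already for two nested sets in $[0,1]$ the best approximant is nonconstant on most of the space), which is exactly why a pointwise oscillation estimate alone does not suffice and the semicontinuous-envelope/insertion argument is needed. Everything else reduces to the oscillation computation of the first paragraph, which is also the only place the pairwise disjointness hypothesis is used.
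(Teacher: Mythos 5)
Your proof is correct and follows essentially the same path as the paper's: compute the pointwise oscillation of $\sum_A\lambda_A\chi_A$ (nonzero only on the disjoint boundaries, where it equals $|\lambda_A|$), and convert this into the norm formula via the identity $\dist(f,C(K))=\tfrac12\osc f$. The only difference is that where the paper cites this identity as a classical fact (referencing \cite[Proposition 1.18]{Benyamini}), you reprove its nontrivial half from scratch using semicontinuous envelopes and the Kat\v etov--Tong insertion theorem, and you spell out the density/extension/surjectivity bookkeeping that the paper leaves implicit.
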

\begin{proof}
	
	The equivalence classes of the characteristic functions $1_A$ for $A\in\mathscr{A}$ generate the quotient space $X(\mathscr A)/C(K)$. We check that these vectors are isometric to the basis of $c_0$ multiplied by $1/2$. That is, we want to show that
	\begin{equation}
	\label{norm}
	\left\| \sum_{i=1}^n\lambda_i 1_{A_i} + C(K) \right\|_{X(\mathscr A)/C(K)} = \frac{1}{2}\,\max_{1\leq i\leq n}|\lambda_i|
	\end{equation}
	whenever $A_i\in \mathscr{A}$ and $\lambda_i\in\mathbb{R}$.
	 The norm of (the class of) a function $f$ in the quotient space by $C(K)$ is the distance of $f$ to $C(K)$, which by a classical result in topology (see e.g. \cite[Proposition 1.18]{Benyamini}) equals
	half of the oscillation of $f$, so
	$$\big{\|}f\big{\|}_{X(\mathscr A)/C(K)} = \dist(f,C(K)) = \frac{\osc f}{2}.$$
	A characteristic function $1_{A}$ has oscillation 1 at every point of $\partial A$ while it is continuous (oscillation 0) out of $\partial A$. Since the sets of $\mathscr A$ have disjoint nonempty boundaries, a linear combination $f = \sum_i \lambda_i 1_{A_i}$ has oscillation $|\lambda_i|$ on $\partial A_i$ and oscillation 0 out of these boundaries. From this, equation (\ref{norm}) follows.
	%To understand how $X(\A)$ is mapped onto $c_0(\A)$ through the quotient map just observe that if $f\in X(\A), A\in\A$ and $a\in\partial A$, then the limit
	%$$
	%\lim{f(y)-f(x)}\quad\quad(x,y\to a,\text{ with } y\in A \text{ and }  x\notin A)
	%$$
	%does not depend on $a$ and represents the ``jump'' of $f$ as the variable ``enters'' into $A$. If we denote it by $Jf(A)$, then we have an operator $J\colon X(\A)\longrightarrow c_0(\A)$ which maps the  ball of radius ${1\over 2}$ of $X(\A)$ onto the unit ball of $c_0(\A)$ whose kernel is $C(K)$.
\end{proof}

We now describe a derivation procedure induced by $\mathscr A$ that will help in proving that the short exact sequence (\ref{CKXA}) is not trivial. This is based on an idea of Ditor \cite{Ditor}. Suppose again that the subsets of $\mathscr A$ have disjoint boundaries. 

\begin{defi}\label{derivation}
	Given $D\subseteq \bigcup_{A\in\mathscr A}\partial A$, we define $D_{(1)}$ as the set of those $a\in D$ for which the following is true: If $a\in\partial A$ and $V$ is a neighborhood of $a$, then there are $B, C\in\mathscr A$, both different from $A$,  such that
	$\partial B\cap V\cap A\cap D\neq \varnothing$ and 
	$\partial C\cap V\cap A^c \cap D \neq \varnothing$.
	
\end{defi}

For every $n\in\mathbb{N}$ we recursively define $D_{(n)}=(D_{(n-1)})_{(1)}$, starting from $D_{(0)} = D$. Recall that if $Y$ is a subspace of a Banach space $Z$, then by a \emph{linear lifting} of  the quotient map $\pi\colon Z \longrightarrow Z/Y$ we understand a
(not necessarily bounded) linear right-inverse of $\pi$. The quotient map $\pi\colon Z \longrightarrow Z/Y$ admits a bounded linear lifting $\jmath$ if and only if $Y$ is complemented in $Z$, the map $\varpi(z)=z-\jmath(\pi(z))$ being a bounded projection of $Z$ onto $Y$.

 %(since then $Y \cong X \oplus \jmath[Y/X]$).

\begin{lem}\label{derivationenough}
	Let $\mathscr A$ be a family of subsets of $K$ with nonempty pairwise disjoint boundaries. Let $D = \bigcup\{\partial A\colon  A\in\mathscr{A}\}$.  If $D_{(n)}\neq\varnothing$, then the norm of any linear lifting for the quotient map $\pi\colon X(\mathscr
	A)\longrightarrow X(\mathscr A)/C(K)$ is at least $n$. Therefore, if $D_{(n)}\neq\varnothing$ for every $n\in \mathbb N$, then $C(K)$ is uncomplemented in $X(\mathscr A)$.
\end{lem}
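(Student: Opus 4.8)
Since an unbounded linear lifting trivially has norm $\ge n$, it suffices to fix a linear lifting $L$ of $\pi\colon X(\mathscr A)\to X(\mathscr A)/C(K)$ with $\|L\|=M<\infty$ and prove $M\ge n$ whenever $D_{(n)}\ne\varnothing$; the last assertion of the lemma then follows, since a bounded projection onto $C(K)$ produces a bounded lifting. For each $A\in\mathscr A$ the vector $L([1_A])$ lies in the coset $1_A+C(K)$, so we may write
\[L([1_A])=1_A-\phi_A,\qquad \phi_A:=1_A-L([1_A])\in C(K).\]
By Lemma~\ref{quotientc0}, if $A_1,\dots,A_k\in\mathscr A$ are pairwise distinct and $\epsilon_i\in\{-1,1\}$ then $\big\|\sum_i\epsilon_i[1_{A_i}]\big\|=\tfrac12$, so the \emph{explicit} function $\sum_i\epsilon_i(1_{A_i}-\phi_{A_i})=L\big(\sum_i\epsilon_i[1_{A_i}]\big)$ (an element of $X(\mathscr A)\subseteq\ell_\infty(K)$) has supremum norm at most $M/2$. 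The plan is to locate, close to any point of $D_{(n)}$, such a sign combination whose lifting attains a value of modulus nearly $\tfrac{n+1}{2}$; this forces $M\ge n+1$, hence in particular $M\ge n$.

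\textbf{Inductive invariant.} I would prove, by induction on $n\ge0$, the statement $I(n)$: \emph{if $a\in D_{(n)}$ and $A_0$ is the member of $\mathscr A$ with $a\in\partial A_0$, then for every neighbourhood $V$ of $a$ and every $\delta>0$ there are pairwise distinct $A_0,\dots,A_k\in\mathscr A$, each having a boundary point in $V$, signs $\epsilon_0,\dots,\epsilon_k\in\{-1,1\}$, and a point $y\in V$, such that $v:=\sum_i\epsilon_i[1_{A_i}]$ satisfies $|L(v)(y)|\ge\tfrac{n+1}{2}-\delta$.} Applied to any $a\in D_{(n)}$ this gives $M/2\ge\tfrac{n+1}{2}-\delta$ for all $\delta$, i.e. $M\ge n+1$. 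The base case $n=0$ is the classical observation behind Lemma~\ref{quotientc0}: taking $v=[1_{A_0}]$, the function $1_{A_0}-\phi_{A_0}$ tends at $a$ to $1-\phi_{A_0}(a)$ along $A_0$ and to $-\phi_{A_0}(a)$ along $A_0^c$ (both limits exist since $\phi_{A_0}$ is continuous and $a\in\overline{A_0}\cap\overline{A_0^c}$); these differ by $1$, so one of them has modulus $\ge\tfrac12$, and a suitable choice of $\epsilon_0$ yields the required $y\in V$.

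\textbf{Inductive step.} Let $a\in D_{(n+1)}=(D_{(n)})_{(1)}$ with $a\in\partial A_0$, and fix $V$, $\delta$. Put $g_0:=1_{A_0}-\phi_{A_0}=L([1_{A_0}])$; its limits at $a$ along $A_0$ and along $A_0^c$ are $u:=1-\phi_{A_0}(a)$ and $u-1$, and $\max(|u|,|u-1|)\ge\tfrac12$. Shrink $V$ so that $|\phi_{A_0}(x)-\phi_{A_0}(a)|<\eta$ on $V$ ($\eta$ small, to be fixed via $\delta$); then $|g_0-u|<\eta$ on $V\cap\operatorname{int}(A_0)$ and $|g_0-(u-1)|<\eta$ on $V\cap\operatorname{int}(A_0^c)$. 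By Definition~\ref{derivation} choose $b\in\partial B\cap V\cap A_0\cap D_{(n)}$ and $c\in\partial C\cap V\cap A_0^c\cap D_{(n)}$ with $B,C\ne A_0$; disjointness of the boundaries forces $b\in\operatorname{int}(A_0)$ and $c\in\operatorname{int}(A_0^c)$. Now the key case split: if $|u|\ge\tfrac12$ I would apply $I(n)$ at $b$ inside a neighbourhood $W\subseteq V\cap\operatorname{int}(A_0)$ of $b$, and if $|u-1|\ge\tfrac12$ at $c$ inside $W\subseteq V\cap\operatorname{int}(A_0^c)$, in both cases with tolerance $\delta/2$. This produces $v'=\sum_j\epsilon_j'[1_{A_j'}]$ whose sets $A_j'$ all have boundary points in $W$, and a point $y\in W$ with $|L(v')(y)|\ge\tfrac{n+1}{2}-\tfrac{\delta}{2}$. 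Since $W$ lies inside $\operatorname{int}(A_0)$ (or $\operatorname{int}(A_0^c)$), which is disjoint from $\partial A_0$, none of the $A_j'$ equals $A_0$; hence $A_0,A_1',\dots$ are pairwise distinct and $v:=\epsilon_0[1_{A_0}]+v'$ has $\|v\|=\tfrac12$. Because $|g_0(y)|\ge\tfrac12-\eta>0$, I may pick $\epsilon_0$ so that $\epsilon_0g_0(y)$ and $L(v')(y)$ have the same sign, whence $|L(v)(y)|=|\epsilon_0g_0(y)|+|L(v')(y)|\ge(\tfrac12-\eta)+(\tfrac{n+1}{2}-\tfrac{\delta}{2})\ge\tfrac{(n+1)+1}{2}-\delta$ once $\eta\le\delta/2$. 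This establishes $I(n+1)$, and the lemma follows: if $D_{(n)}\ne\varnothing$ for all $n$ then any bounded lifting would have norm $\ge n$ for every $n$, which is absurd, so $\pi$ admits no bounded linear lifting and $C(K)$ is uncomplemented in $X(\mathscr A)$.

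\textbf{Main obstacle.} The delicate part is designing the invariant $I(n)$ so that the induction actually closes, and this is where Definition~\ref{derivation} earns its keep in two ways. First, one cannot predict on which side of the unit jump of $L([1_{A_0}])$ at $a$ the lifting is large, so depth-$n$ material must be available \emph{both} inside and outside $A_0$ — precisely why $D_{(1)}$ demands boundary points of other sets on both sides of $A$. Second, the requirement that every participating set have a boundary point in the current neighbourhood is what makes the descent through the nested interiors $\operatorname{int}(A_0)\supseteq W\supseteq\cdots$ work without ever reusing a set, so that $v$ remains a $\pm1$-combination of \emph{distinct} classes and keeps quotient norm exactly $\tfrac12$; without this bookkeeping the norm of $v$ could blow up to $1$ and the estimate would collapse.
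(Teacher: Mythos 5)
Your proof is correct and uses essentially the same inductive descent through the derivatives $D_{(n)}$ as the paper's: at a boundary point of $A$, the lifted characteristic function has a unit jump, so one side carries a value of modulus at least $\tfrac12$; Definition~\ref{derivation} supplies a deeper derivative point on that side, and disjointness of boundaries lets one add a fresh set to the signed combination without changing its quotient norm. The only (cosmetic) difference is that you organize the recursion as an induction on the derivation depth with a shrinking neighbourhood carried along, and your base case at $D_{(0)}$ needs no further descent, which incidentally gives the marginally sharper bound $\|L\|\ge n+1$ in place of the paper's $\|L\|\ge n$ — both of course suffice for the stated conclusion.
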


\begin{proof}
	We will first prove the following claim.
	\medskip

\noindent\textbf{Claim.} Let $(f_A)$ be any family in $X(\A)$ such that $g_A := f_A - 1_A \in C(K)$ for every $A\in\A$. Suppose $D_{(n)}\neq\emptyset$ and fix any $\varepsilon>0$.
	Then there are different sets $A_1,\dots, A_n\in\A$ and signs $u_i=\pm 1$ such that
	\begin{equation}\label{halfn}
	\left\|\sum_{i=1}^n u_if_{A_i}\right\|_\infty\geq \frac{n}{2}-n\varepsilon.
	\end{equation}
%	\bigskip
To obtain this we will prove by induction on $1\leq k\leq n$ that there are different sets $A_1,\dots, A_k$, signs $u_i=\pm1$, and a point
	\begin{equation}\label{ak}
	a_k\in D_{(n-k)}\big{\backslash }\left( \bigcup_{i=1}^k\partial A_i  \right)
	\end{equation}
	such that
	\begin{equation}\label{value}
	(u_1f_{A_1}+\cdots+u_1f_{A_k})(a_k)> \frac{k}{2}-k\varepsilon.
	\end{equation}
	
	First we deal with the case $k=1$. Pick any
	$a\in D_{(n)}$ and let $A_1$ be the set from $\A$ whose boundary contains $a$. Consider the neighborhood of $a$,
	$$
V=\{z\in K\colon |g_{A_1}(z)-g_{A_1}(a)|<\varepsilon\}.
	$$
	Using Definition~\ref{derivation}, we can find $x\in V\cap A_1\cap D_{(n-1)}
	$ and $y\in V\cap A_1^c\cap D_{(n-1)}$ that do not belong to $\partial A_1$. We have two cases:
	\begin{itemize}
		\item If $g_{A_1}(a)\geq -1/2$, then $f_{A_1}(x)>{1\over 2}-\varepsilon$ and we take $a_1=x$ and $u_1=1$.
		\item If $g_{A_1}(a)\leq -1/2$, then $f_{A_1}(y)<\varepsilon-{1\over 2}$ and we take $a_1=y$ and $u_1=-1$.
	\end{itemize}
Let us check the induction step. Suppose one has found $A_i, u_i$ for $1\leq i\leq k$  and $a_k$ satisfying (\ref{ak}) and (\ref{value}). The point $a_k$ does not belong to any border $\partial A_i$ by (\ref{ak}), each function $1_{A_i}$ is continuous at $a_k$, and hence also each function $f_{A_i} = g_{A_i} + 1_{A_i}$ is continuous at $a_k$. Since $u_1f_{A_1}+\cdots+u_1f_{A_k}$ is continuous at $a_k$ we can find a neighborhood $V$ of $a_k$ disjoint from $\bigcup_{1\leq i\leq k}\partial A_i$ and  such that the value of $u_1f_{A_1}+\cdots+u_1f_{A_k}$ at any point of $V$ differs from 
	$(u_1f_{A_1}+\cdots+u_1f_{A_k})(a_k)$ at most by $\varepsilon/2$. Let $A$ be the set of $\A$ whose boundary contains $a_k$.
	Shrinking $V$ if necessary, we may also assume that
	$$
	|g_A(a_k)-g_A(x)|<\varepsilon/2\ \ \text{ for all } x\in V.
	$$
According to Definition~\ref{derivation} there are $B,C\in\A$ different from $A$ such that 
	\begin{equation}\label{B}
	\partial B\cap V\cap A\cap D_{(n-k-1)}\neq \varnothing,
	\end{equation}
	\begin{equation}\label{C}
	\partial C \cap V\cap A^c\cap D_{(n-k-1)}\neq \varnothing.
	\end{equation}
If $g_A(a_k)\geq -{1\over 2}$, then we take $A_{k+1}=A, u_{k+1}=1$ and $a_{k+1}$ any element in the set (\ref{B}), and we get
\begin{align*}
\big{(}u_1f_{A_1}&+\cdots+u_k f_{A_k}+u_{k+1}f_{A_{k+1}}\big{)}(a_{k+1})\\
&= (u_1f_{A_1}+\cdots+u_1f_{A_k})(a_{k+1})+f_{A}(a_{k+1}) \\
		&>  (u_1f_{A_1}+\cdots+u_1f_{A_k})(a_{k})-\frac{\varepsilon}{2}+g_{A}(a_{k+1})+1\\
		&> \frac{k}{2}-k\varepsilon- \frac{\varepsilon}{2} + g_A(a_k) -\frac{\varepsilon}{2} + 1\\
		&\geq \frac{k}{2}-k\varepsilon- \frac{\varepsilon}{2}-\frac{1}{2}  -\frac{\varepsilon}{2} + 1\\ &= \frac{k+1}{2} - (k+1)\varepsilon.
\end{align*}
The other case is that $g_A(a_k)< -{1\over 2}$. Then we take $A_{k+1}=A, u_{k+1}=-1$ and $a_{k+1}$ in the set (\ref{C}) and we get
\begin{align*}
\big{(}u_1f_{A_1}&+\cdots+u_1f_{A_k}+u_{k+1}f_{A_{k+1}})(a_{k+1}\big{)}\\
&= (u_1f_{A_1}+\cdots+u_1f_{A_k})(a_{k+1})-f_{A}(a_{k+1}) \\
		&>  (u_1f_{A_1}+\cdots+u_1f_{A_k})(a_{k})-\frac{\varepsilon}{2}-g_{A}(a_{k+1})\\
		&> \frac{k}{2}-k\varepsilon- \frac{\varepsilon}{2} - g_A(a_k) -\frac{\varepsilon}{2}\\
		&> \frac{k}{2}-k\varepsilon- \frac{\varepsilon}{2}+\frac{1}{2}  -\frac{\varepsilon}{2}\\ &=\frac{k+1}{2} - (k+1)\varepsilon.
		\end{align*}
	This finishes the proof of the claim.  
	
	\medskip
	
	If  $L\colon X(\A)/C(K)\longrightarrow X(\A)$ is a linear lifting for the quotient map $\pi\colon  X(\A)\longrightarrow X(\A)/C(K)$, then the functions $f_A=L(1_A+C(K)))$ satisfy that $f_A-1_A\in C(K)$. Therefore (\ref{halfn}) holds for some $u_i$ and $A_i$, and hence 
	\begin{eqnarray*}
		\frac{n}{2}-n\varepsilon &\leq& \left\|\sum_{i=1}^n u_if_{A_i}\right\|_\infty
		=\left\|L\left(\sum_{i=1}^n u_i(1_{A_i}+C(K))\right)\right\|_\infty\\
		&\leq& \|L\|\cdot \left\|\sum_{i=1}^n u_i(1_{A_i}+C(K))\right\|_{X(\A)/C(K)}\\
		&=& \|L\|\cdot\frac{1}{2}\max_{1\leq i\leq n}|u_i| = \frac{\|L\|}{2}.
	\end{eqnarray*}
	Since this holds for arbitrary $\varepsilon$ we conclude that $\|L\|\geq n$.
\end{proof}

%\begin{remark}
%The ``two-sided'' requirement cannot be omitted  in Definition~\ref{derivation}. Indeed, if $K=[0,1]$ and we consider the class $\A$ of subsets of $K$ having the form $A=K\backslash\{a\}$ for $a\in K$, then any two different sets of $\A$ have disjoint boundaries. Also, $ \bigcup_{A\in\mathscr A}\partial A=[0,1]$ and, given $a\in[0,1]$ and any open $V$ containing $a$ there is $B\in\A$ meeting $V$. Hence, if we omit the part concerning $A^c$ in the definition of $D_{(1)}$, we would have $[0,1]_{(1)}=[0,1]$. However in this case $C[0,1]$ is complemented in $X(\A)$.
%\end{remark}

%\begin{defi}
%	Let $K$ be a compact space. A family $\mathscr A$ of open subsets of $K$ is said to be a strong chain if, given two different $A, B\in \mathscr A$, either $\overline A\subseteq B$ or $\overline B\subseteq A$.
%\end{defi}

\begin{cor}\label{derivation1enough}
	Let $\mathscr A$ be a family of subsets of $K$ with nonempty pairwise disjoint boundaries such that 
	$$\partial A \subseteq \overline{\{\partial B\cap A\colon B\in\mathscr{A}\setminus\{A\}\}} \cap  \overline{\{\partial C\setminus A\colon C\in\mathscr{A}\setminus\{A\}\}}$$
for all $A\in\mathscr{A}$. Then the exact sequence 
	\begin{equation*}%\label{c01XA}
	\begin{CD}
	0@>>> C(K) @>>> X(\mathscr A) @>>> c_0(\mathscr A)@>>> 0
	\end{CD}
	\end{equation*}
	is not trivial.
\end{cor}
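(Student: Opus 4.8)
The plan is to read the corollary off Lemmas~\ref{quotientc0} and~\ref{derivationenough}. Since the sets in $\mathscr A$ are assumed to have nonempty, pairwise disjoint boundaries, Lemma~\ref{quotientc0} identifies $X(\mathscr A)/C(K)$ isometrically with $c_0(\mathscr A)$, so the sequence displayed in the statement is, up to this identification, exactly the canonical sequence (\ref{CKXA}); asserting that it is not trivial is the same as asserting that $C(K)$ is uncomplemented in $X(\mathscr A)$. By Lemma~\ref{derivationenough}, for this it is enough to check that $D_{(n)}\neq\varnothing$ for every $n$, where $D=\bigcup\{\partial A\colon A\in\mathscr A\}$.

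First I would observe that the hypothesis is precisely what is needed to make the derivation stabilize at the first step: the claim is that $D_{(1)}=D$. Granting this, $D_{(n)}=D$ for all $n$ by a one-line induction (if $D_{(k)}=D$ then $D_{(k+1)}=(D_{(k)})_{(1)}=D_{(1)}=D$), and $D\neq\varnothing$ because each $\partial A$ is nonempty, so Lemma~\ref{derivationenough} finishes the argument.

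To see $D_{(1)}=D$, fix $a\in D$. By disjointness of the boundaries there is a unique $A\in\mathscr A$ with $a\in\partial A$, and according to Definition~\ref{derivation} one must exhibit, for every neighbourhood $V$ of $a$, sets $B,C\in\mathscr A\setminus\{A\}$ with $\partial B\cap V\cap A\cap D\neq\varnothing$ and $\partial C\cap V\cap A^c\cap D\neq\varnothing$. Here one reads $\{\partial B\cap A\colon B\in\mathscr A\setminus\{A\}\}$ in the hypothesis as the union $\bigcup_{B\neq A}(\partial B\cap A)$; then the first inclusion $\partial A\subseteq\overline{\bigcup_{B\neq A}(\partial B\cap A)}$ says that every neighbourhood $V$ of $a$ meets $\bigcup_{B\neq A}(\partial B\cap A)$, which yields some $B\neq A$ with $V\cap\partial B\cap A\neq\varnothing$, and since $\partial B\subseteq D$ this is the required nonempty intersection. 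The second inclusion supplies $C$ in identical fashion. Hence $a\in D_{(1)}$, as desired.

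There is no real obstacle: the substantive work is already contained in Lemma~\ref{derivationenough}, and the corollary merely repackages the combinatorial heart of that lemma as a transparent, purely topological sufficient condition on $\mathscr A$. The only points requiring a little care are the interpretation of the set-builder notation in the hypothesis (as a union over $B\neq A$, respectively $C\neq A$) and the trivial bookkeeping remarks that $\partial B\subseteq D$ and $\partial C\subseteq D$; both are immediate.
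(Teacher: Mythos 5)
Your proof is correct and matches the (implicit) argument the paper intends: the hypothesis is exactly a restatement of $D_{(1)}=D$ once one reads the set-builder expressions as unions, and then $D_{(n)}=D\neq\varnothing$ for all $n$, so Lemmas~\ref{quotientc0} and~\ref{derivationenough} give the conclusion directly. The paper itself gives no separate proof of the corollary, so your write-up simply supplies the elementary bookkeeping that the authors leave to the reader.
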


Recall that an open set $V$ is said to be \emph{regular} if it is the interior of its closure, or equivalently if $\partial V = \partial\overline{V}$.

\begin{defi}
	A regular tunnel is one made of regular open sets.
\end{defi}

\begin{thm}\label{twistedtunnel}
	If $K$ has a regular tunnel of cardinality $\kappa$, then there exists nontrivial exact sequence
	\begin{equation}\label{c0CK}\begin{CD}
	0@>>> C(K) @>>> Z @>>> c_0(\kappa)@>>> 0.
	\end{CD} 
	\end{equation}
	
\end{thm}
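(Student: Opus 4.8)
The plan is to realize the twisted sum (\ref{c0CK}) as the sequence (\ref{CKXA}) for a suitably chosen family $\mathscr A$ built from the given regular tunnel $\mathscr U$, and then to invoke Corollary~\ref{derivation1enough}. First I would take $\mathscr A=\mathscr U$ itself; since $\mathscr U$ is a regular tunnel, each $U\in\mathscr U$ is a regular open set, so $\partial U=\partial\overline U$, and in particular the boundaries $\partial U$ are closed nowhere dense sets. The first point to check is that the boundaries are pairwise disjoint: if $U<V$ in $\mathscr U$, then $\overline U\subseteq V$, so $\partial U\subseteq V$ is disjoint from $K\setminus V\supseteq\partial V$, hence $\partial U\cap\partial V=\varnothing$; and by linearity of the order on $\mathscr U$ this holds for any two distinct members. (One should also note $\partial U\ne\varnothing$ for every $U\in\mathscr U$, which follows because a chain of \emph{clopen}-boundary-free pieces could not be dense-boundaried, or more directly: if some $\partial U=\varnothing$ then $U$ is clopen and can be discarded without affecting density of $\bigcup\partial U$, so we may assume all boundaries are nonempty.) With this, Lemma~\ref{quotientc0} already gives $X(\mathscr A)/C(K)\cong c_0(\mathscr A)=c_0(\kappa)$, so the middle term $Z:=X(\mathscr U)$ sits in an exact sequence of exactly the required shape.

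It remains to verify nontriviality, for which I would check the hypothesis of Corollary~\ref{derivation1enough}, namely that for every $A\in\mathscr U$,
\[
\partial A \subseteq \overline{\{\partial B\cap A\colon B\in\mathscr U\setminus\{A\}\}}\cap\overline{\{\partial C\setminus A\colon C\in\mathscr U\setminus\{A\}\}}.
\]
Fix $A\in\mathscr U$ and $a\in\partial A$, and let $V$ be an open neighbourhood of $a$. Since $A$ is regular open, $a\in\partial A=\partial\overline A$ means $V\cap A\ne\varnothing$ (indeed $V\cap A$ is a nonempty open set, because $a\in\overline A$ and $a$ is not interior to $\overline A$... more carefully, $a\in\overline A$ gives $V\cap A\ne\varnothing$ directly). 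Now apply the stronger splitting property of Lemma~\ref{spli} to the nonempty open set $V\cap A$: there is $B\in\mathscr U$ with $(V\cap A)\cap B\ne\varnothing$ and $(V\cap A)\setminus\overline B\ne\varnothing$. The set $(V\cap A)\cap B$ is a nonempty open set contained in $B$ but meeting the complement of $B$'s closure is guaranteed only "from outside"; to extract a boundary point I would instead argue: the open set $W_1:=V\cap A\cap B$ is nonempty, and $W_2:=V\cap A\setminus\overline B$ is nonempty; the closed set $\overline B$ separates a point of $W_1$ (inside $\overline B$) from $W_2$ (outside $\overline B$) within the connected-by-$V\cap A$... since $K$ need not be connected this needs care. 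The clean route is: $V\cap A\cap B$ and $V\cap A$ are nonempty open sets with $V\cap A\cap B\subseteq B$ and $V\cap A\not\subseteq\overline B$, so $\overline{V\cap A\cap B}$ cannot be contained in the open set $\operatorname{Int}\overline B=B$ (regularity of... no, $B$ regular open gives $\operatorname{Int}\overline B=B$) unless $\overline{V\cap A\cap B}\subseteq B$; if that inclusion held then $V\cap A\cap B$ would be relatively clopen in $V\cap A$, and then iterating Lemma~\ref{spli} inside the piece $V\cap A\setminus\overline B$ with a smaller member of the chain, together with the tunnel property $\overline{\bigcup\partial U}=K$, forces a boundary point of \emph{some} chain member into $V\cap A$. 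I would organize this as: because $\bigcup\{\partial U\colon U\in\mathscr U\}$ is dense, pick $U\in\mathscr U$ with $\partial U\cap(V\cap A)\ne\varnothing$; since boundaries are disjoint and $a\in\partial A$ with $a\notin\partial U$... one gets $U\ne A$, so $\partial U\cap V\cap A$ witnesses the first set; symmetrically, $V\setminus\overline A$ is a nonempty open set (as $a\in\partial A$ and $A$ is regular, so $a\in\partial\overline A$, giving $V\not\subseteq\overline A$), and density of the boundaries produces $C\in\mathscr U$ with $\partial C\cap(V\setminus\overline A)\ne\varnothing$, hence $\partial C\cap V\setminus A\ne\varnothing$ with $C\ne A$.

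The main obstacle, and the step I would spend the most care on, is exactly this last verification: turning "the union of all boundaries is dense" into "for a \emph{fixed} $A$ and a \emph{fixed} boundary point $a\in\partial A$, arbitrarily small neighbourhoods of $a$ contain boundary points of \emph{other} members lying on \emph{both} sides of $A$." Density alone gives a boundary point of some member in any neighbourhood of $a$; what must be extracted is that it can be taken $\ne A$ and on a prescribed side of $A$. The key observations making this work are (i) regularity of $A$: $a\in\partial A$ forces both $V\cap A\ne\varnothing$ and $V\setminus\overline A\ne\varnothing$ for every neighbourhood $V$ of $a$, so each side contains a nonempty open set; (ii) disjointness of boundaries: a boundary point of a member meeting $V\cap A$ cannot be a point of $\partial A$, hence belongs to a member $\ne A$; and (iii) density of $\bigcup\partial U$: applied to the nonempty open sets $V\cap A$ and $V\setminus\overline A$, it produces the required witnesses $B\ne A$ and $C\ne A$. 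Assembling (i)--(iii) gives the inclusion required by Corollary~\ref{derivation1enough}, which then yields nontriviality of the sequence and completes the proof, with $Z=X(\mathscr U)$.
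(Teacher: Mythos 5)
Your proof is correct and follows essentially the same route as the paper: take $\mathscr A=\mathscr U$ (discarding clopens so all boundaries are nonempty), apply Corollary~\ref{derivation1enough} by using regularity of each $A$ to ensure both $V\cap A$ and $V\setminus\overline A$ are nonempty open, and then invoke density of $\bigcup\partial U$ to find boundary points of other chain members on each side. The long digression via Lemma~\ref{spli} and the slip about ``$a\notin\partial U$'' (the right justification for $U\ne A$ is simply that $\partial A\cap A=\varnothing$) are unnecessary detours, but your final ``I would organize this as'' paragraph is exactly the paper's argument.
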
 

\begin{proof}
	We can suppose that the tunnel is nontrivial, in the sense that all borders are nonempty. We have to prove that such a tunnel $\mathscr A$ satisfies the hypotheses of Corollary~\ref{derivation1enough}. Fix $A\in\A$, and $x\in \partial A$, and $U$ a neighborhood of $x$. Since $x\in\partial A$, we have that $U\cap A$ is a nonempty open set, so by the definition of tunnel there exists $B\in\A$ such that $\partial B \cap U\cap A\neq\varnothing$. On the other hand, since $\A$ is a regular tunnel, $x\in\partial A = \partial \overline{A} = \partial(K\setminus \overline{A})$, therefore $U\setminus \overline{A}\neq\varnothing$, and again there exists $C\in\A$ such that $\partial C \cap U\setminus \overline{A}\neq\varnothing$. 
\end{proof}

Now we are ready to prove the theorem announced in the Introduction.

\begin{thm}\label{the-main}
If there is a splitting chain of clopen sets in $\omega^\ast$, then there is a nontrivial exact sequence
\begin{equation*}
\begin{CD}
	0@>>> C(\omega^\ast) @>>> Z @>>> c_0(\mathfrak{c})@>>> 0.
	\end{CD}
	\end{equation*}
\end{thm}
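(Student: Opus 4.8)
The plan is to deduce Theorem~\ref{the-main} from Theorem~\ref{twistedtunnel} by manufacturing, out of a splitting chain of clopen sets in $\omega^\ast$, a \emph{regular} tunnel of cardinality $\mathfrak c$. Since $\omega^\ast$ is compact and zerodimensional, a splitting chain of clopens is in particular a splitting chain of open sets, so Proposition~\ref{chain-tunnel} (or the countable variant, Proposition~\ref{forantonio}) already produces a tunnel $\mathscr V$. The two things that need attention are (i) the elements of $\mathscr V$ should be regular open, and (ii) the cardinality should be exactly $\mathfrak c$ (equivalently, at least $\mathfrak c$, since $|\omega^\ast|=2^{\mathfrak c}$ but any tunnel certainly has size at most that of the topology, and we only need a tunnel of size $\kappa=\mathfrak c$ to get a quotient $c_0(\mathfrak c)$).

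First I would address regularity. The tunnel $\mathscr V$ from Proposition~\ref{chain-tunnel} consists of sets of the form $\bigcup\mathscr U'$ for $\mathscr U'\subseteq\mathscr U$ without a $\leq$-maximal element; such a union need not be regular open, but one can replace each $V\in\mathscr V$ by its regular-open hull $\operatorname{Int}\overline V$. One must check that $\{\operatorname{Int}\overline V\colon V\in\mathscr V\}$ is still a chain under $<$ and still a tunnel. The chain property follows because $V\le W$ implies $\overline V\subseteq\overline W$, hence $\operatorname{Int}\overline V\subseteq\operatorname{Int}\overline W$, and when $V<W$ one in fact gets $\overline{\operatorname{Int}\overline V}=\overline V\subseteq W$, giving a strict step; distinct members of $\mathscr V$ that collapse to the same regular-open set are simply identified. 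For the tunnel property, note $\partial(\operatorname{Int}\overline V)\subseteq\partial V$ is the wrong direction, so instead I would argue directly using Lemma~\ref{spli}: given a nonempty clopen $W$ and a clopen $W'<W$, build an increasing $\omega$-sequence $(U_n)$ in $\mathscr U$ each splitting $W'$, put $V=\bigcup_n U_n$, and observe that since each $U_n$ is clopen the set $V$ is $F_\sigma$-open with $V\subseteq W'$ on the relevant part; by compactness there is a point of $\overline V\setminus V$ inside $\overline{W'}$, and one checks this point lies on $\partial(\operatorname{Int}\overline V)$ as well — because the point is in $\overline{U_n}\subseteq\operatorname{Int}\overline V$-closure but not in $V$, and $V\setminus\overline{U_n}\ne\varnothing$ for all $n$ keeps it a genuine boundary point of the regular hull. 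Alternatively, and more cleanly, one can sidestep this by going through Theorem~\ref{Equiv}: a splitting chain gives condition (b), hence (c), a continuous map $f\colon\omega^\ast\to L$ onto a linearly ordered compactum with nowhere dense fibres; then the sets $\operatorname{Int}\overline{f^{-1}(-\infty,l)}$ for $l\in L$ form a regular tunnel, the boundary of $f^{-1}(-\infty,l)$ being contained in $f^{-1}(\{l\})$ which is nowhere dense, so taking the regular hull changes nothing essential while guaranteeing regularity of the fibre-free open sets.

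Next I would pin down the cardinality $\mathfrak c$. The linearly ordered space $L$ above is separable is \emph{not} automatic, so I would instead appeal to a standard fact about $\omega^\ast$: a splitting chain of clopens in $\omega^\ast$ corresponds to a chain in $\mathscr P(\omega)/\fin$ that is splitting, and any such chain has cofinality and coinitiality large; in particular one can extract (or the chain already contains) a well-ordered or reverse-well-ordered subchain of length $\mathfrak c$, or more simply, between any two comparable clopens of the chain one can interpolate by Lemma~\ref{spli}, and iterating a dense-order construction of length $\mathfrak c$ inside the tunnel from Proposition~\ref{chain-tunnel}(2) produces $\mathfrak c$-many distinct regular open sets all of whose boundaries are nonempty. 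Concretely: property (\ref{p2}) of Proposition~\ref{chain-tunnel} says the tunnel is densely ordered, so it embeds a copy of the rationals, and then using (\ref{p1}) (closure under arbitrary unions) one fills in $\mathfrak c$-many Dedekind cuts, each giving a distinct element; discarding any with empty boundary (there are none once we are inside a genuine tunnel) leaves a regular tunnel of size exactly $\mathfrak c$. Then Theorem~\ref{twistedtunnel} with $\kappa=\mathfrak c$ yields the nontrivial exact sequence $0\to C(\omega^\ast)\to Z\to c_0(\mathfrak c)\to 0$, which is the claim.

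The main obstacle, I expect, is the bookkeeping in the regularity step: making sure that after replacing tunnel elements by their regular hulls the family remains \emph{linearly ordered by the strict relation $<$} (not merely $\subseteq$) and that enough boundary points survive so that the union of boundaries stays dense. The route through Theorem~\ref{Equiv}(c) largely neutralizes this, because there the relevant open sets $f^{-1}(-\infty,l)$ have boundary inside the nowhere dense fibre $f^{-1}(\{l\})$, so one has tight control: $\operatorname{Int}\overline{f^{-1}(-\infty,l)}=f^{-1}(-\infty,l]\setminus(\text{nowhere dense})$ up to a nowhere dense set, and the chain/tunnel/regularity conditions all become transparent. The secondary point to be careful about is simply exhibiting $\mathfrak c$ distinct sets with nonempty boundary; this is where the density of the order (Proposition~\ref{chain-tunnel}(\ref{p2})) together with closure under unions (\ref{p1}) does the work, and it should be routine once stated. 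Everything else is a direct citation of Theorem~\ref{twistedtunnel}.
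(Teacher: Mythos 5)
Your overall strategy is the right one and matches the paper's: reduce to Theorem~\ref{twistedtunnel} by extracting a regular tunnel of cardinality $\mathfrak c$ from the given splitting chain of clopens, using Proposition~\ref{forantonio} (or \ref{chain-tunnel}). Where you go astray is in the regularity step, which you yourself flag as ``the main obstacle.'' You propose either (a) replacing each tunnel element $V$ by its regular-open hull $\operatorname{Int}\overline V$ and then trying to verify directly that boundary points survive, or (b) passing through Theorem~\ref{Equiv}(c) and taking hulls of the preimages $f^{-1}(-\infty,l)$. Neither sketch is carried to completion: in (a), the sentence arguing that the limit point ``stays a genuine boundary point of the regular hull'' does not actually show the point lies outside $\operatorname{Int}\overline V$, and in (b) you assert the hull ``changes nothing essential'' without showing that the union of (possibly shrunken) boundaries $\partial\bigl(\operatorname{Int}\overline{f^{-1}(-\infty,l)}\bigr)$ remains dense. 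Both of these would in fact require the very fact you never invoke, which is the paper's one-line fix: \emph{in $\omega^\ast$ every open $F_\sigma$ set is already regular open}, a consequence of the classical fact that every nonempty closed $G_\delta$ subset of $\omega^\ast$ has nonempty interior. Since the tunnel $\mathscr V_\omega$ from Proposition~\ref{forantonio} consists of countable increasing unions of clopens, each of its members is open $F_\sigma$, hence automatically regular --- no hull-taking, no bookkeeping. This is the missing idea, and without it the regularization step is a genuine gap.

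Two smaller points. First, your cardinality argument mixes the two tunnel constructions: property (\ref{p1}) (closure under arbitrary unions) and the Dedekind-cut argument apply to $\mathscr V$ from Proposition~\ref{chain-tunnel}, whereas the members you need to be regular (the $F_\sigma$ ones) come from $\mathscr V_\omega$ in Proposition~\ref{forantonio}; the former family contains non-$F_\sigma$ unions that are not covered by the regularity observation. The paper simply notes $|\mathscr V_\omega|\le|\mathscr U|^\omega=\mathfrak c$, and the lower bound $\mathfrak c$ can be obtained by a dense-interpolation argument along the lines you describe, but confined to $\omega$-unions. Second, your remark that between distinct tunnel elements the regular hulls remain \emph{strictly} $<$-related is correct and worth keeping if one insists on the hull route, but it becomes moot once one uses the $\omega^\ast$-specific regularity fact.
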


\begin{proof}
	The \v{C}ech-Stone remainder $\omega^\ast$ has the property that every open $F_\sigma$-set is regular (this is a consequence of the fact that every nonempty $G_\delta$ closed set has nonempty interior). Thus, if $\mathscr{U}$ is a splitting chain
	of clopen subsets of $\omega^\ast$, the countable increasing unions form a tunnel (by Proposition~\ref{forantonio}) which is moreover regular. Its cardinality is $|\mathscr{U}|^\omega = \mathfrak{c}$.
\end{proof}	

As we will see in the next section, splitting chains of clopen sets do exist in $\omega^\ast$ under some assumptions, e.g., under $\mathsf{CH}$ or in the classical Cohen model, but one cannot prove their existence in $\mathsf{ZFC}$. We do not know if
twisted sums like in Theorem \ref{the-main} exist in $\mathsf{ZFC}$.  

In \cite{corr}, a twisted sum as above was constructed under $\mathsf{CH}$. It was used to produce a further nontrivial twisted sum
\begin{equation}\label{co}
\begin{CD}
0@>>>C(\omega^\ast) @>>> Z @>>> C(\omega^\ast) @>>> 0.
\end{CD}
\end{equation}
We do not know if such a sequence exists in any axiomatic setting other than $\mathsf{CH}$. The note \cite{corr} pointed out that a statement made in \cite{avilesAiM} that all exact sequences like (\ref{co}) are trivial was incorrect. It is
also unknown if there are nontrivial sequences
$$
\begin{CD}
0@>>>C(\omega^\ast) @>>> Z @>>> X @>>> 0,
\end{CD}
$$
with $X$ of density less than $\mathfrak{c}$. It cannot be taken separable because $C(\omega^\ast)$ is ``separably injective''; see \cite[Section 2.5]{aviles} for this issue. We remark that if  (\ref{sex}) is nontrivial, then so is the ``expanded'' sequence
\begin{equation*}
\begin{CD}
0@>>> Y @>(\imath,0)>> Z\oplus S @>\pi\times{\bf I}_S>> X\oplus S@>>> 0, 
\end{CD}
\end{equation*}
where, as one can guess, $(\imath,0)(y)=(\imath(y),0)$ and $(\pi\times{\bf I}_S)(y,s)=(\pi(y),s)$, whichever is the space $S$.
 In particular, any reduction of the size of $c_0(\kappa)$ on the right side of our exact sequences would be an improvement of our statements.

We finish this section with some results relating tunnel-like conditions with the existence of twisted sums in settings different than that of $\omega^\ast$. 

\begin{prop}\label{ordertwisted}
Suppose that there exists a continuous surjection $f\colon K\longrightarrow L$, where $L$ is a linearly ordered space, and a set $S\subseteq L$ such that
$$f^{-1}(s) \subseteq \overline{\{x\colon  s<f(x)\in S \}} \cap \overline{\{x\colon  s>f(x)\in S\}}  $$
for every $s\in S$. Then there is a nontrivial sequence
\begin{equation*}\begin{CD}
0@>>> C(K) @>>> Z @>>> c_0(S) @>>> 0.
\end{CD}
\end{equation*}
\end{prop}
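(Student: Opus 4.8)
The plan is to realize the twisted sum as a space $X(\mathscr{A})$ for a suitable family $\mathscr{A}$ of open subsets of $K$, then apply Lemma~\ref{quotientc0} to identify the quotient and Corollary~\ref{derivation1enough} to get nontriviality --- the same strategy used to prove Theorem~\ref{twistedtunnel}. The natural choice is
$$
\mathscr{A}=\{A_s : s\in S\},\qquad A_s=f^{-1}\big((-\infty,s)\big),
$$
the preimages of the proper initial segments of $L$ cut out by $S$; each $A_s$ is open because $f$ is continuous.

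First I would pin down the boundaries: $\partial A_s=f^{-1}(s)$ for every $s\in S$. One inclusion is free from continuity, since $\overline{A_s}\subseteq f^{-1}\big((-\infty,s]\big)$ forces $\partial A_s\subseteq f^{-1}(s)$. The reverse inclusion is where the hypothesis enters: if $p\in f^{-1}(s)$ then, since $\{x : s>f(x)\in S\}\subseteq A_s$, the hypothesis gives $p\in\overline{\{x : s>f(x)\in S\}}\subseteq\overline{A_s}$, while $p\notin A_s$ because $f(p)=s$; hence $p\in\partial A_s$. As $f$ is onto, each $\partial A_s=f^{-1}(s)$ is nonempty, distinct fibers are disjoint, and in particular $s\mapsto A_s$ is injective. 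So $\mathscr{A}$ is a family with nonempty pairwise disjoint boundaries, indexed bijectively by $S$, and Lemma~\ref{quotientc0} yields a short exact sequence $0\to C(K)\to X(\mathscr{A})\to c_0(\mathscr{A})\to 0$ with $c_0(\mathscr{A})$ isometric to $c_0(S)$.

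It then remains to feed $\mathscr{A}$ into Corollary~\ref{derivation1enough}, which is pure bookkeeping once $\partial A_{s'}=f^{-1}(s')$ is known: for fixed $s\in S$,
$$
\bigcup_{s'\in S\setminus\{s\}}\!\big(\partial A_{s'}\cap A_s\big)=f^{-1}\big(\{s'\in S : s'<s\}\big)=\{x : s>f(x)\in S\},
$$
and likewise $\bigcup_{s'\in S\setminus\{s\}}\big(\partial A_{s'}\setminus A_s\big)=\{x : s<f(x)\in S\}$. With these substitutions the inclusion demanded by Corollary~\ref{derivation1enough} is precisely the hypothesis $f^{-1}(s)\subseteq\overline{\{x : s>f(x)\in S\}}\cap\overline{\{x : s<f(x)\in S\}}$, so the sequence is nontrivial and we are done.

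I do not anticipate a genuine obstacle: the whole argument is a reduction to Corollary~\ref{derivation1enough}. The one place requiring care is the identity $\partial A_s=f^{-1}(s)$, where the hypothesis (not merely surjectivity of $f$) is what makes the boundary nonempty --- this also silently rules out $s$ being a least or greatest element of $L$. Note that, in contrast with Theorem~\ref{twistedtunnel}, no regularity of the sets $A_s$ is needed, since Corollary~\ref{derivation1enough} does not require it.
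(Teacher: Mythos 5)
Your proof is correct and follows essentially the same route as the paper: take $A_s = f^{-1}(-\infty,s)$, verify $\partial A_s = f^{-1}(s)$ (one inclusion from continuity, the other from the hypothesis), and feed $\mathscr A = \{A_s : s\in S\}$ into Corollary~\ref{derivation1enough}. Your write-up merely spells out the bookkeeping (nonemptiness, disjointness, the identification of the unions in Corollary~\ref{derivation1enough} with the sets in the hypothesis) that the paper leaves implicit.
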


\begin{proof} For $s\in S$, put $A_s=f^{-1}(-\infty,s)$ and then 
	consider the family $\mathscr{A} = \{ A_s\colon s\in S\}$. Notice that $f^{-1}(s) \subseteq \partial A_s$ by the assumption and $\partial A_s \subseteq f^{-1}(s)$ by continuity
	of $f$. Thus, $\partial A_s = f^{-1}(s)$ and we can apply Corollary~\ref{derivation1enough}.
\end{proof}

Proposition~\ref{ordertwisted} unifies a number of earlier constructions of twisted sums with $C(K)$-spaces. Indeed, if we take $K=L=[0,1]$ with the usual order, $f$ the identity, and $S$ is the set of dyadic rationals, one gets the Foia\c s--Singer sequence in \cite[Theorems 3 and 4]{f-s}, in which the space $\Gamma[0,1]$ corresponds to ``our'' $X(\mathscr A)$. Analogously, taking $K=L=\{0,1\}^\mathbb N$ as the Cantor set with the ``lexicographical'' order, $f$ the identity, and $S$ is the subset of sequences with finitely many ones, one obtains the exact sequence used in \cite{corr}.

Finally, a nontrivial sequence of the form
$$
\begin{CD}
0@>>>C(\omega^\omega) @>>> Z @>>> c_0 @>>> 0
\end{CD}
$$
(see \cite[Section~4]{Felix03}) can be obtained from Lemma~\ref{derivationenough} as follows. We need the following representation of $\omega^\omega$.
We consider a \emph{reversed, signed version of Schreier family:}
$$L = \left\{ \left(\frac{1}{n_1},\ldots,\frac{1}{n_k},0,0,0,,\ldots\right) : n_i\in \mathbb{Z}, k\leq |n_1|<|n_2|<\cdots<|n_k| \right\}$$ 
We put on $L$ the lexicographical order, declaring $r<s$ if $r_i<s_i$, where $i$ is the first index such that $r_i\neq s_i$. The line  $L$ is compact in the order topology. This is because the order is complete: every subset has an infimum and a supremum. Moreover, $L$ is countable, hence scattered. The derivatives can be checked to be the sets
$$L^{(d)} = \left\{ \left(\frac{1}{n_1},\ldots,\frac{1}{n_k},0,0,0,,\ldots\right) \in L :  k\leq |n_1|-d   \right\}.$$
Thus, $L$ has height $\omega$ and $L^{(\omega)}$ is a singleton, so $L$ is homeomorphic to the ordinal interval $[0,\omega^\omega]$.

Now, for $s\in L^{(1)}$, put $A_s=(-\infty,s)$ and define $\mathscr A=\{A_s:s\in L^{(1)}\}$. Then if $D=\bigcup_s \partial A_s$ we have $D=L^{(1)}$ and by the peculiarities of the ordering we have $D_{(n)}=L^{(n+1)}\neq \varnothing$ for all $n$ and Lemma~\ref{derivationenough} shows that the sequence
$$
\begin{CD}
0@>>>C(\omega^\omega) @>>> X(\mathscr A) @>>> c_0 @>>> 0
\end{CD}
$$
is not trivial.

\section{Splitting chains in $\mathscr{P}(\omega)/\fin$}\label{forcing}

A set $A\subseteq \omega$ \emph{splits} $B\subseteq \omega$ if $|A\cap B| = |B\setminus A| = \omega$. We say that a family $\mathscr{A}\subseteq [\omega]^\omega$ is \emph{splitting} if for each $B\in [\omega]^\omega$ there is $A\in \mathscr{A}$
splitting $B$. Clearly, $\omega^*$ has a splitting chain of clopens if and only if there is a family $\mathscr{C}\subseteq [\omega]^\omega$ which  is splitting and which forms a $\subseteq^*$-chain. We will call such $\mathscr{C}$ a \emph{splitting
chain}. Consequently, if there is a splitting chain in $[\omega]^\omega$, then $\omega^*$ has a tunnel. It is unclear if this implication can be reversed (see Remark \ref{clopen}). 

In this section we will consider the question of the existence of splitting chains. 

We will begin with the easy observation that the existence of a splitting chain is consistent with $\mathsf{ZFC}$. 
Recall that $(\mathscr{L},\mathscr{R})$ is \emph{a pre-gap} if both $\mathscr{L}$ and $\mathscr{R}$ are lienarly ordered by $\subseteq^*$, and $L\subsetneq^* R$ for each $L\in \mathscr{L}$ and $R\in \mathscr{R}$. We will assume that both
$\mathscr{L}$ and $\mathscr{R}$ are nonempty. If $C\subseteq \omega$ is
such that $L\subseteq^* C$ and $C\subseteq^* R$ for each $L\in \mathscr{L}$ and $R\in \mathscr{R}$, then we say that $C$ \emph{interpolates} $(\mathscr{L}, \mathscr{R})$. If there is no $C$ interpolating $(\mathscr{L},\mathscr{R})$, then we say that
$(\mathscr{L},\mathscr{R})$ is \emph{a gap}. 
\begin{prop}\label{CH}(See \cite[Theorem 2.4]{Nyikos}.)
	$\mathsf{(CH)}$ implies that there is a splitting chain.  
\end{prop}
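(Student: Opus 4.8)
The plan is to construct a splitting chain by transfinite recursion of length $\mathfrak c=\omega_1$, enumerating all infinite subsets of $\omega$ as $\{B_\alpha:\alpha<\omega_1\}$ and building an increasing-in-$\subseteq^*$ sequence $\langle C_\alpha:\alpha<\omega_1\rangle$ so that $C_\alpha$ splits $B_\alpha$. At a successor stage $\alpha+1$, given $C_\alpha$, I would simply need to find $C_{\alpha+1}\supseteq^* C_\alpha$ that splits $B_\alpha$; this is trivial — if $C_\alpha$ already splits $B_\alpha$ set $C_{\alpha+1}=C_\alpha$, otherwise $B_\alpha\setminus C_\alpha$ is infinite (or $B_\alpha\cap C_\alpha$ is, but then $C_\alpha$ already splits it unless $B_\alpha\subseteq^* C_\alpha$), so enlarge $C_\alpha$ by adding an infinite-coinfinite subset of $B_\alpha\setminus C_\alpha$. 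The real content is keeping the construction going through countable limit stages.

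At a limit stage $\lambda<\omega_1$ of countable cofinality, I have a countable $\subseteq^*$-increasing chain $\langle C_{\alpha_n}:n<\omega\rangle$ cofinal in $\langle C_\alpha:\alpha<\lambda\rangle$, and I need a pseudo-union $C_\lambda$ with $C_{\alpha_n}\subseteq^* C_\lambda$ for all $n$. Such a pseudo-union always exists (a standard diagonalization): choose $C_\lambda$ so that $C_\lambda\setminus C_{\alpha_n}$ is finite for each $n$, e.g. $C_\lambda=\bigcup_n\big(C_{\alpha_n}\cap[k_n,\infty)\big)$ for suitably fast-growing $k_n$. To make sure the chain does not become a single pseudo-union too soon — i.e. to keep it growing and not accidentally cover $\omega$ — I would also ensure at limit stages that $\omega\setminus C_\lambda$ remains infinite, which is easy to arrange by also reserving infinitely many points to stay outside. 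This is where CH is used: since there are only $\omega_1$ many sets $B_\alpha$ to handle and cofinally many limit stages below $\omega_1$ are of cofinality $\omega$, the recursion of length $\omega_1$ runs through without obstruction, and we only ever need pseudo-unions of countable chains, which always exist in ZFC.

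Finally, the family $\mathscr C=\{C_\alpha:\alpha<\omega_1\}$ is a $\subseteq^*$-chain by construction, and it is splitting because every infinite $B\subseteq\omega$ appears as some $B_\alpha$ and is split by $C_{\alpha+1}\in\mathscr C$. Hence $\mathscr C$ is a splitting chain, which by the remarks at the start of this section yields a splitting chain of clopens in $\omega^\ast$.

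The main obstacle is purely bookkeeping: one must verify that the diagonal pseudo-union at each limit stage can be taken to genuinely extend (strictly, in $\subseteq^*$) all previous sets while leaving an infinite complement, so that at the next successor stage there is still room to split the new $B_\alpha$; but all of this is routine and no step requires anything beyond ZFC together with the enumeration of $[\omega]^\omega$ in order type $\omega_1$ provided by CH.
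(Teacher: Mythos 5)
Your construction is fatally flawed: no $\subseteq^*$-\emph{increasing} sequence $\langle C_\alpha:\alpha<\omega_1\rangle$ can ever be a splitting chain. To see this, just take $B=C_1\setminus C_0$. Since $C_0\subsetneq^* C_1$, the set $B$ is infinite; but every element of your chain either is $\subseteq^* C_0$ (hence almost disjoint from $B$) or is $\supseteq^* C_1$ (hence almost contains $B$), so $B$ is split by no member of $\mathscr C$. You half-notice the underlying problem when you write ``unless $B_\alpha\subseteq^* C_\alpha$'' and then simply move past it, but in fact this case is unavoidable and unrecoverable in a monotone construction: once $B\subseteq^* C_\alpha$, \emph{no} later $C_\beta\supseteq^* C_\alpha$ can split $B$, and nothing in your scheme guarantees that an earlier $C_\beta$ already did.

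The paper's proof is structured precisely so as to avoid this trap. It does not build the chain as a well-ordered increasing sequence; instead, at stage $\gamma+1$, if $A_\gamma$ is not yet split, one looks at the cut it determines in the chain built so far, namely $\mathscr C_\star=\{C:A_\gamma\cap C=^*\varnothing\}$ below and $\mathscr C^\star=\{C: A_\gamma\subseteq^* C\}$ above. Here CH is used not merely to enumerate $[\omega]^\omega$ in type $\omega_1$ but also to ensure this cut is countable, so that Hausdorff's theorem (there are no $(\omega,\omega)$-gaps) provides an interpolant $N$ with $\mathscr C_\star\subseteq^* N\subseteq^*\mathscr C^\star$. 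The chain is then extended by \emph{inserting in the middle} a suitable modification of $N$ (adding or removing half of $A_\gamma$) that both respects the chain order and splits $A_\gamma$. Your limit-stage discussion of pseudo-unions is not where the real difficulty lies; the difficulty is that a splitting chain must be dense (in the sense that no set can sit inside a ``jump'' between two elements), and this forces one to insert new elements into cuts rather than stack them on top.
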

\begin{proof}
	Enumerate $[\omega]^\omega = \{A_\alpha\colon \alpha<\omega_1\}$. We will construct the desired chain inductively. Let $\mathscr{C}_0 = \varnothing$ and suppose that we have constructed $\mathscr{C}_\alpha$ for each
	$\alpha\leq \gamma$ in such a way that $\mathscr{C}_\alpha \subseteq \mathscr{C}_\beta$ for each $\alpha<\beta\leq \gamma$ and $A_\alpha$ is split by an element of $\mathscr{C}_{\alpha+1}$ for $\alpha<\gamma$. 
	If $A_\gamma$ is split by $\mathscr{C}_\gamma$, then put $\mathscr{C}_{\gamma+1} = \mathscr{C}_\gamma$. Otherwise, let $\mathscr{C}^\star = \{C\in \mathscr{C}_\gamma \colon A_\gamma \subseteq^* C\}$ and $\mathscr{C}_\star = \{C\in
		\mathscr{C}_\gamma \colon A_\gamma \cap C =^* \varnothing \}$ and notice that $\mathscr{C} = \mathscr{C}^\star \cup \mathscr{C}_\star$. Since there are no $(\omega,\omega)$-gaps, there is an infinite $N$ such that $C_0 \subseteq^* N \subseteq^* C_1$ for each 
	$C_0\in \mathscr{C}_\star$ and $C_1\in \mathscr{C}^\star$. Fix any $H \subseteq A_\gamma$ splitting $A_\gamma$. If $N$ splits $A_\gamma$ then let $\mathscr{C}_{\gamma+1} = \mathscr{C}_\gamma \cup \{N\}$. If $N \cap
	A_\gamma =^* \varnothing$, then let $\mathscr{C}_{\gamma+1} = \mathscr{C}_\gamma \cup \{N\cup H\}$. Finally, if $A_\gamma \subseteq^* N$, then let $\mathscr{C}_{\gamma+1} = \mathscr{C}_\gamma \cup \{N\setminus H\}$.

	If $\gamma$ is a limit ordinal, then let $\mathscr{C}_\gamma = \bigcup_{\alpha<\gamma} \mathscr{C}_\alpha$.
\end{proof}

We say that $(\mathscr{L},\mathscr{R})$ is \emph{a cut in} a $\subseteq^*$-chain $\mathscr{C}$ if it is a pre-gap, $\mathscr{L}\cup \mathscr{R}\subseteq \mathscr{C}$ and there is no element of $\mathscr{C}$
interpolating it. In other words, if $C\in \mathscr{C}$,
then either $R\subseteq^* C$ for some $R\in \mathscr{R}$ or $C\subseteq^* L$ for some $L\in \mathscr{L}$.

We call a pre-gap $(\mathscr{L}, \mathscr{R})$ \emph{tight} if for each infinite $A\subseteq \omega$ such that $A\subseteq^* R$ for each $R\in \mathscr{R}$, there is $L\in \mathscr{L}$
such that $L\cap A\ne^* \varnothing$. We say that $A\subseteq\omega$ \emph{spreads a pre-gap} $(\mathscr{L},\mathscr{R})$ if $A\cap L=^*
\varnothing$ and $A\subseteq^* R$ for each $L\in \mathscr{L}$, $R\in \mathscr{R}$. In other words, $A$ spreads a pre-gap $\mathscr{G}$ if and only if it witnesses
that $\mathscr{G}$ is not tight. We say that a cut in a chain is tight if it is tight as a pre-gap.

\begin{prop}\label{splitting-tight} A $\subseteq^*$-chain $\mathscr{C}$ is splitting if and only if every cut in $\mathscr{C}$ is tight. 
\end{prop}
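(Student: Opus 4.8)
The plan is to prove the two implications separately, working directly with the combinatorial definitions of \emph{splitting chain}, \emph{cut}, and \emph{tight pre-gap} in $\mathscr P(\omega)/\fin$.

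\textbf{($\Rightarrow$) If $\mathscr C$ is splitting, then every cut is tight.} Suppose $(\mathscr L,\mathscr R)$ is a cut in $\mathscr C$ which is \emph{not} tight, witnessed by an infinite $A\subseteq\omega$ with $A\subseteq^* R$ for all $R\in\mathscr R$ and $A\cap L=^*\varnothing$ for all $L\in\mathscr L$ (i.e.\ $A$ spreads the pre-gap). I want to produce an infinite set that no member of $\mathscr C$ splits. Since $\mathscr C$ is splitting, there is $C\in\mathscr C$ splitting $A$. But $(\mathscr L,\mathscr R)$ is a cut, so either $R\subseteq^* C$ for some $R\in\mathscr R$ — whence $A\subseteq^* R\subseteq^* C$ and $C$ does not split $A$, a contradiction — or $C\subseteq^* L$ for some $L\in\mathscr L$ — whence $C\cap A\subseteq^* L\cap A=^*\varnothing$ and again $C$ does not split $A$. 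Either way we contradict that $C$ splits $A$. So every cut is tight.

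\textbf{($\Leftarrow$) If every cut in $\mathscr C$ is tight, then $\mathscr C$ is splitting.} Let $A\in[\omega]^\omega$ be arbitrary; I must find $C\in\mathscr C$ splitting $A$. Partition $\mathscr C$ into the ``lower'' part $\mathscr L=\{C\in\mathscr C : C\cap A=^*\varnothing\}$, the ``upper'' part $\mathscr R=\{C\in\mathscr C: A\subseteq^* C\}$, and the ``middle'' part $\mathscr M$ of those $C$ that split $A$. If $\mathscr M\neq\varnothing$ we are done, so assume $\mathscr M=\varnothing$, i.e.\ $\mathscr C=\mathscr L\cup\mathscr R$. One checks that $L\subsetneq^* R$ for every $L\in\mathscr L$, $R\in\mathscr R$ (if $L,R$ lie on the wrong side of the chain order, comparability forces $L\cap A=^*\varnothing$ and $A\subseteq^* L$, impossible for infinite $A$; here one should handle the degenerate cases $\mathscr L=\varnothing$ or $\mathscr R=\varnothing$ separately — e.g.\ if $\mathscr R=\varnothing$ then no $C$ almost-contains $A$, and taking any $C$ and noting it does not split $A$ forces $C\cap A=^*\varnothing$, etc., leading to a quick contradiction, and similarly if $\mathscr L=\varnothing$). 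So $(\mathscr L,\mathscr R)$ is a pre-gap contained in $\mathscr C$, and by construction no $C\in\mathscr C$ interpolates it (an interpolant would lie in $\mathscr L\cap\mathscr R$, forcing $A\subseteq^* C$ and $C\cap A=^*\varnothing$); hence $(\mathscr L,\mathscr R)$ is a cut. By hypothesis it is tight. Now $A$ itself satisfies $A\subseteq^* R$ for all $R\in\mathscr R$ (by definition of $\mathscr R$), so tightness yields $L\in\mathscr L$ with $L\cap A\neq^*\varnothing$ — but $L\in\mathscr L$ means $L\cap A=^*\varnothing$, a contradiction. Therefore $\mathscr M\neq\varnothing$ and $\mathscr C$ is splitting.

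\textbf{Main obstacle.} The only subtle point is the bookkeeping around empty halves of the cut and the requirement in the definition of pre-gap that both $\mathscr L$ and $\mathscr R$ be nonempty. I would deal with this by first disposing of the cases $\mathscr L=\varnothing$ and $\mathscr R=\varnothing$ directly (each gives a contradiction with $A$ infinite via comparability in the chain), so that in the remaining case $(\mathscr L,\mathscr R)$ is a genuine pre-gap and a genuine cut, and the tightness hypothesis applies cleanly. Everything else is just unwinding the definitions of $\subseteq^*$, "splits", "interpolates", and "spreads".
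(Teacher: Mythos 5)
Your proof takes the same route as the paper's, just phrased as the direct implications rather than their contrapositives: your $(\Rightarrow)$ is the paper's first paragraph (a set spreading a cut is a set no $C\in\mathscr C$ splits) read in reverse, and your $(\Leftarrow)$ is the paper's second paragraph (an unsplit $A$ gives rise to a non-tight cut) read in reverse.

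Two caveats about the edge cases you flagged, though, since your proposed handling of them is not quite right. First, the claim that $\mathscr L=\varnothing$ or $\mathscr R=\varnothing$ ``gives a contradiction with $A$ infinite via comparability'' is false: take $\mathscr C$ to be the chain of cofinite subsets of $\omega$ and $A$ any infinite co-infinite set; then $\mathscr M=\mathscr L=\varnothing$ with no contradiction, and indeed such a $\mathscr C$ has no cuts at all, so the proposition as literally stated would fail for it. What is tacitly in force (and is always arranged in the paper's applications, e.g.\ $\mathscr C_0=\{\varnothing,\omega\}$ in the proof of Theorem~\ref{cohen}) is that $\varnothing,\omega\in\mathscr C$, which automatically makes both halves of your partition nonempty. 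Second, your parenthetical reason that $(\mathscr L,\mathscr R)$ has no interpolant in $\mathscr C$ because ``an interpolant would lie in $\mathscr L\cap\mathscr R$'' does not follow: an interpolant could be the $\subseteq^*$-largest element of $\mathscr L$ without being in $\mathscr R$. What you actually need, and what the paper's ``in other words'' reformulation of cut is asking for, is that every $C\in\mathscr C=\mathscr L\cup\mathscr R$ satisfies $C\subseteq^* L$ for some $L\in\mathscr L$ or $R\subseteq^* C$ for some $R\in\mathscr R$, which holds trivially with $L=C$ or $R=C$. Read against that formulation of cut and with the standing convention $\varnothing,\omega\in\mathscr C$, your argument is correct and matches the paper's.
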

\begin{proof} Assume that $(\mathscr{L},\mathscr{R})$ is a cut in $\mathscr{C}$ which is not tight. It means that there is an infinite $A\subseteq \omega$ such that $A\subseteq^* R$ and $A\cap L=^* \varnothing$ for each $L\in \mathscr{L}$, $R\in
	\mathscr{R}$. If $C\in \mathscr{C}$, then either $R\subseteq^* C$ for some $R\in \mathscr{R}$ or $C\subseteq^* L$ for some $L\in \mathscr{L}$. In both cases $C$ does not split $A$.

	If an infinite $A$ is not split by $\mathscr{C}$, then let $\mathscr{L} = \{C\in \mathscr{C}\colon C\cap A=^* \varnothing \}$ and
		$\mathscr{R} = \{C\in \mathscr{C}\colon A\subseteq^*C\}$. It is easy to verify that $(\mathscr{L}, \mathscr{R})$ is a cut
		in $\mathscr{C}$ and $A$ witnesses that it is not tight.
\end{proof}

We say that a pre-gap $(\mathscr{L}, \mathscr{R})$ is of type $(\kappa,\lambda)$ if $\mathscr{L}$ is of cofinality $\lambda$ and $\mathscr{R}$ is of coinitiality $\kappa$ (with respect to the $\subseteq^*$ order).

\begin{thm}[\cite{Nyikos83}]\label{nyikos}
There is a $(\omega_1, \omega_1)$-tight pre-gap if and only if $\mathfrak{p}=\omega_1$.
\end{thm}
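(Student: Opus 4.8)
The plan is to prove both implications. For the easier direction, suppose $\mathfrak{p} = \omega_1$. Then there is an $(\omega_1,\omega_1)$-pre-gap which is actually a gap in $\mathscr{P}(\omega)/\fin$ (for instance a Hausdorff gap, or any unfilled tower-like pre-gap of the appropriate type). I would argue that such a gap is automatically tight: if $A$ were infinite with $A \subseteq^* R$ for every $R \in \mathscr{R}$ and $A \cap L =^* \varnothing$ for every $L \in \mathscr{L}$, then $A$ itself would interpolate the pre-gap (after possibly enlarging $A$ by a set disjoint from all $L$'s and inside all $R$'s, or simply noting $A$ already sits between the two sides), contradicting that it is a gap. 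So the non-filled-ness gives tightness directly. One has to be slightly careful that ``$A$ spreads the pre-gap'' is formally a bit weaker than ``$A$ interpolates'' — $A$ need not contain the $L$'s — but from a spreading set one builds an interpolating set as $C = A \cup \bigcup_{\text{finite}} L$ along a cofinal $\omega_1$-sequence; this is where the hypothesis $\mathfrak{p} > \omega_1$ would be needed to actually carry out such a union, and under $\mathfrak{p} = \omega_1$ one instead exhibits the gap explicitly so no union is required. I would phrase it as: take any unfilled $(\omega_1,\omega_1)$-pre-gap (exists since $\mathfrak{p} = \omega_1$); a spreading set for it would yield, by a standard diagonalization against the two $\omega_1$-towers, an interpolant, contradiction; hence it is tight.

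For the harder direction, assume $\mathfrak{p} > \omega_1$ and let $(\mathscr{L},\mathscr{R})$ be an arbitrary $(\omega_1,\omega_1)$-pre-gap; I must show it is not tight, i.e. produce an infinite $A$ with $A \cap L =^* \varnothing$ for all $L \in \mathscr{L}$ and $A \subseteq^* R$ for all $R \in \mathscr{R}$ (a spreading set). Fix increasing cofinal $\omega_1$-sequences $(L_\alpha)_{\alpha<\omega_1}$ in $\mathscr{L}$ and decreasing coinitial $(R_\alpha)_{\alpha<\omega_1}$ in $\mathscr{R}$. Consider the family $\{R_\alpha \setminus L_\alpha : \alpha < \omega_1\}$: since $L_\alpha \subsetneq^* R_\alpha$ each of these is infinite, and I would check this family (together with the $R_\alpha$'s) generates a filter with the strong finite intersection property of size $\omega_1$ — any finite subfamily has infinite intersection because the $L$'s are a chain, the $R$'s are a chain, and $L_\beta \subsetneq^* R_\gamma$ always. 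By $\mathfrak{p} > \omega_1$ this filter has a pseudointersection $A$, i.e. an infinite $A$ with $A \subseteq^* R_\alpha \setminus L_\alpha$ for every $\alpha$. Then $A \subseteq^* R_\alpha$ for cofinally (hence all) $R \in \mathscr{R}$, and $A \cap L_\alpha =^* \varnothing$ for cofinally many $\alpha$, hence for all $L \in \mathscr{L}$ by the chain condition. So $A$ spreads the pre-gap and it is not tight.

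The main obstacle I anticipate is the bookkeeping in the ($\Longleftarrow$, i.e. $\mathfrak{p}>\omega_1 \Rightarrow$ not tight) direction: one must verify carefully that the family $\{R_\alpha \setminus L_\alpha\}$ really does have the strong finite intersection property rather than merely pairwise infinite intersections — this uses that for any finite set of indices, taking $\beta$ the max, $\bigcap_i (R_{\alpha_i}\setminus L_{\alpha_i}) \supseteq^* R_\beta \setminus L_\beta$, which is infinite — and that a pseudointersection of this particular family genuinely yields both conditions defining ``spreads'', including the subtle point that $A\cap L=^*\varnothing$ for \emph{all} $L$, not just the chosen cofinal ones, which follows since every $L\in\mathscr{L}$ satisfies $L\subseteq^* L_\alpha$ for some $\alpha$. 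The ($\Longrightarrow$) direction is comparatively routine modulo quoting the existence of an unfilled $(\omega_1,\omega_1)$-pre-gap under $\mathfrak{p}=\omega_1$, which is essentially the definition of $\mathfrak{p}$ combined with a standard construction; I would cite \cite{Nyikos83} for the precise statement.
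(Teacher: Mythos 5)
Your argument for the implication ``$\mathfrak{p}>\omega_1$ implies that no $(\omega_1,\omega_1)$-pre-gap is tight'' is correct: along cofinal and coinitial $\omega_1$-sequences the differences $R_\alpha\setminus L_\alpha$ form, mod finite, a decreasing family of infinite sets; an infinite pseudointersection (which exists because $\omega_1<\mathfrak{p}$) spreads the pre-gap, and the passage from the chosen sequences to arbitrary $L\in\mathscr{L}$, $R\in\mathscr{R}$ is exactly as you say. The other implication, however, contains a genuine error: it is not true that an unfilled $(\omega_1,\omega_1)$-pre-gap must be tight, and no repair along the lines you sketch (enlarging a spreading set by finitely many $L$'s) can work, since an interpolant must almost contain \emph{all} $\omega_1$ left sets, whereas a spreading set is almost disjoint from them --- the two notions pull in opposite directions. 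Concretely, take a Hausdorff gap $(L_\alpha,R_\alpha)_{\alpha<\omega_1}$ whose members are subsets of a coinfinite set $E\subseteq\omega$ and replace each $R_\alpha$ by $R_\alpha\cup(\omega\setminus E)$: any interpolant $C$ of the modified pre-gap would make $C\cap E$ an interpolant of the original Hausdorff gap, so the modified pre-gap is still an $(\omega_1,\omega_1)$-gap, yet $\omega\setminus E$ spreads it, so it is not tight. Even more decisively, Hausdorff gaps exist in $\mathsf{ZFC}$; if ``gap implies tight'' were true, tight $(\omega_1,\omega_1)$-pre-gaps would exist in $\mathsf{ZFC}$, contradicting the direction you did prove in any model of $\mathfrak{p}>\omega_1$. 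So your proposal for the direction ``$\mathfrak{p}=\omega_1$ implies a tight pre-gap exists'' (which is the harder half, not the routine one) does not go through.

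What that half actually requires is a construction controlling the differences $R_\alpha\setminus L_\alpha$: as your own computation shows, the spreading sets are precisely the infinite pseudointersections of $\{R_\alpha\setminus L_\alpha\colon\alpha<\omega_1\}$, so one must arrange this family to have none. A standard route: by Rothberger's theorem, $\mathfrak{p}=\omega_1$ yields a strictly $\subseteq^*$-decreasing tower $(P_\alpha)_{\alpha<\omega_1}$ with no infinite pseudointersection; recursively choose $L_\alpha$, $R_\alpha$ so that $L_\xi\subseteq^* L_\alpha\subsetneq^* R_\alpha\subseteq^* R_\xi$ for $\xi<\alpha$ and $R_\alpha\setminus L_\alpha=^* P_\alpha$. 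At stage $\alpha$ one interpolates the countable pre-gap built so far by some $C$ (there are no $(\omega,\omega)$-gaps) and sets $R_\alpha=C\cup P_\alpha$, $L_\alpha=C\setminus P_\alpha$; the inductive hypothesis $R_\xi\setminus L_\xi=^* P_\xi$ together with $P_\alpha\subseteq^* P_\xi$ makes all the almost-inclusions go through, and minor extra bookkeeping (e.g.\ feeding infinite pieces of $P_\xi\setminus P_\alpha$ into the two sides) keeps the type equal to $(\omega_1,\omega_1)$. Any set spreading the resulting pre-gap would be a pseudointersection of the tower, so the pre-gap is tight. Note finally that the paper gives no proof of Theorem~\ref{nyikos} --- it quotes it from Nyikos --- so there is no shortcut in the text to lean on; the construction above (or an equivalent one) is the real content of that direction.
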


The above result indicates that it is quite difficult to construct a splitting chain in general. It is not completely trivial to obtain a single tight pre-gap, and a splitting chain has to look like a tight pre-gap \emph{everywhere}. Also, Theorem \ref{nyikos} 
suggests a strategy to prove that consistently there are no splitting chains. We have to find a model in which $\mathfrak{p}=\omega_2$ but
for some reasons every chain has to have a cut being an $(\omega_1, \omega_1)$-gap. As we will see such a
reason can be provided by Proper Forcing Axiom. The following theorem was proved by Nyikos in \cite{Nyikos}.

\begin{thm}\label{PFA} Assume $\mathsf{MA}(\omega_1)$ holds, $\mathfrak{c}=\omega_2$ and there are no $(\omega_1,\omega_2)$-gaps. Then there is no splitting chain. 
\end{thm}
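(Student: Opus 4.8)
The plan is to assume for contradiction that a splitting chain $\mathscr C$ exists and then to locate inside it a \emph{tight} pre-gap whose type is excluded by one of the three ingredients: Theorem~\ref{nyikos}, the axiom $\mathsf{MA}(\omega_1)$, or the hypothesis on $(\omega_1,\omega_2)$-gaps. First I would dispose of the trivial cases and compute a cofinality. If $\mathscr C$ had a $\subseteq^*$-maximum it would have to be $=^*\omega$ (a co-infinite complement of a proper maximum is split by no member of $\mathscr C$), and a $\subseteq^*$-minimum would have to be $=^*\varnothing$; so, deleting the member $=^*\omega$ if present, we may assume $\mathscr C$ has no maximum and no minimum. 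I claim $\operatorname{cf}(\mathscr C,\subseteq^*)=\omega_2$: it is $\le|\mathscr C|\le\mathfrak c=\omega_2$, and if $\langle C_\alpha:\alpha<\kappa\rangle$ were $\subseteq^*$-increasing and cofinal with $\kappa\le\omega_1$, then $\langle\omega\setminus C_\alpha:\alpha<\kappa\rangle$ would be a $\subseteq^*$-decreasing sequence of infinite sets, hence (by $\mathsf{MA}(\omega_1)$) would have an infinite pseudo-intersection $A$; but then $A\cap C$ is finite for every $C\in\mathscr C$, so no member of $\mathscr C$ splits $A$, contradicting that $\mathscr C$ is splitting.

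For the core step, fix a strictly $\subseteq^*$-increasing sequence $\langle s_\xi:\xi<\omega_1\rangle$ in $\mathscr C$ (built by recursion, using at limit stages that a subchain of length $<\omega_1$ is not cofinal in $\mathscr C$), and put $\mathscr S=\{C\in\mathscr C: s_\xi\subsetneq^* C\text{ for all }\xi<\omega_1\}$; this set is nonempty since $\langle s_\xi\rangle$ is not cofinal. Let $\mathscr R\subseteq\mathscr S$ be $\subseteq^*$-coinitial in $\mathscr S$ of least possible cardinality $\mu$; since $\mathscr S\subseteq[\omega]^\omega$ we have $\mu\in\{1,\omega,\omega_1,\omega_2\}$, and $\mathscr G:=(\langle s_\xi:\xi<\omega_1\rangle,\mathscr R)$ is a pre-gap of type $(\mu,\omega_1)$. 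The point is that $\mathscr G$ is tight. If not, pick an infinite $A$ spreading it, i.e.\ $A\cap s_\xi=^*\varnothing$ for all $\xi$ and $A\subseteq^* R$ for all $R\in\mathscr R$. Then each $C\in\mathscr C$ satisfies either $C\subseteq^* s_\xi$ for some $\xi$, whence $C\cap A$ is finite, or else $C\in\mathscr S$, and then $R\subseteq^* C$ for some $R\in\mathscr R$ (coinitiality), whence $A\subseteq^* C$. In either case $C$ does not split $A$, so $A$ is split by no member of $\mathscr C$ — the same contradiction as before. Hence $\mathscr G$ is tight.

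To conclude, I would first note that $\mathscr G$ cannot have an interpolant $N\in[\omega]^\omega$: the sets $N\setminus s_\xi$ ($\xi<\omega_1$) are infinite (as $s_\xi\subsetneq^* N$) and $\subseteq^*$-decreasing, so $\mathsf{MA}(\omega_1)$ gives them an infinite pseudo-intersection, which spreads $\mathscr G$, contradicting tightness. Thus $\mathscr G$ is a \emph{gap}, so $\mu\ge\omega$. If $\mu=\omega$ this is an $(\omega,\omega_1)$-gap, which $\mathsf{MA}(\omega_1)$ forbids. If $\mu=\omega_1$ then $\mathscr G$ is a tight $(\omega_1,\omega_1)$-pre-gap, impossible by Theorem~\ref{nyikos} since $\mathsf{MA}(\omega_1)$ together with $\mathfrak c=\omega_2$ gives $\mathfrak p=\omega_2>\omega_1$. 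If $\mu=\omega_2$ then $\mathscr G$ is an $(\omega_2,\omega_1)$-gap, and complementation $A\mapsto\omega\setminus A$ turns it into an $(\omega_1,\omega_2)$-gap, contrary to hypothesis. Every case being absurd, there is no splitting chain.

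The step I expect to require the most care is the tightness of $\mathscr G$: it hinges on taking $\mathscr R$ to be coinitial in the set $\mathscr S$ of members of $\mathscr C$ lying strictly above the \emph{entire} $\omega_1$-chain (rather than above some single ``limit'' element, which need not exist in $\mathscr C$), since this is exactly what makes the dichotomy ``$C$ below some $s_\xi$ / $C\in\mathscr S$'' exhaustive and forces a putative spreading set to be split by nobody. The ancillary facts I would quote without proof are standard: $\mathsf{MA}(\omega_1)$ yields $\mathfrak p>\omega_1$, the absence of $(\omega,\omega_1)$-gaps, and pseudo-intersections for $\subseteq^*$-decreasing $\omega_1$-sequences of infinite sets; and that $A\mapsto\omega\setminus A$ interchanges the gap types $(\omega_1,\omega_2)$ and $(\omega_2,\omega_1)$.
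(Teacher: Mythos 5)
Your proof is correct, and it reaches the paper's conclusion through the same overall strategy: extract from a putative splitting chain an $\omega_1$-increasing sequence, produce above it a tight $(\cdot,\omega_1)$-cut, and then rule out each possible coinitiality using, respectively, $\mathsf{MA}(\omega_1)$, Theorem~\ref{nyikos}, and the hypothesis on $(\omega_1,\omega_2)$-gaps. The main organizational difference is in how the right side of the gap is produced: the paper builds a $\subseteq^*$-decreasing sequence $(R_\beta)_{\beta<\gamma}$ inside $\mathscr C$ by transfinite recursion, using $\mathsf{MA}(\omega_1)$ at each step to produce an interpolant $B$, then $B'\subsetneq^* B$, and appealing to splitting to replace $B$ by a member of $\mathscr C$ interpolating the pre-gap; it stops when the pre-gap becomes a genuine gap, and then reads off the type of that gap. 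You instead take the whole upper part $\mathscr S=\{C\in\mathscr C: s_\xi\subsetneq^* C\ \forall\xi\}$ and a coinitial $\mathscr R\subseteq\mathscr S$ of minimal size $\mu$; tightness is then immediate from the exhaustive ``below some $s_\xi$ / in $\mathscr S$'' dichotomy, exactly as you flagged. A small virtue of your version is that the case split $\mu\in\{\omega,\omega_1,\omega_2\}$ makes fully explicit what the paper compresses into ``by our assumption $\gamma=\omega_1$'': one must also rule out the countable-coinitiality case, which is where $\mathsf{MA}(\omega_1)$ (no $(\omega,\omega_1)$-gaps, since $\mathfrak b>\omega_1$) is used, and one must justify that the coinitiality is at most $\omega_2$ via $|\mathscr C|\le\mathfrak c=\omega_2$. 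One cosmetic remark: your preliminary step ``a $\subseteq^*$-minimum would have to be $=^*\varnothing$'' should just read that a splitting chain of infinite sets can have no $\subseteq^*$-minimum (a minimum $C$ would itself be unsplit), since all members are infinite; this does not affect anything that follows.
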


\begin{proof}
	Assume $\mathscr{C}$ is a $\subseteq^*$-chain of infinite subsets of $\omega$ and assume that $\omega \in \mathscr{C}$. If $\mathscr{C}$ is splitting, then we can find an increasing sequence $(L_\alpha)_{\alpha<\omega_1}$ of elements of $\mathscr{C}$. $\mathsf{MA}(\omega_1)$ implies
	that if $L_\alpha\subseteq^* B$ for some $B$, then there is $B'\subsetneq^* B$ such that $L_\alpha \subseteq^* B'$ for each $\alpha<\omega_1$. Using this remark one can construct inductively a $\subseteq^*$-decreasing sequence
	$(R_\alpha)_{\alpha<\kappa}$ of elements of $\mathscr{C}$ such that $(L_\alpha,R_\beta)_{\alpha<\omega_1, \beta<\kappa}$ forms a gap. Indeed, assume that $R_0 =\omega$ and suppose that $(L_\alpha, R_\beta)_{\alpha<\omega_1, \beta<\gamma}$ is not a gap. Then there is $B\subseteq \omega$
	interpolating it.
	Using, the above remark, we can find  $B'\subsetneq^* B$ such that $L_\alpha\subseteq^* B'$ for each $\alpha<\omega_1$. There is $C\in \mathscr{C}$ splitting $B\setminus B'$. Clearly, $C$ has to interpolate $(L_\alpha, R_\beta)_{\alpha<\omega_1, \beta<\gamma}$ and thus,
	we can proceed with the construction. 

	Now, by our assumption $\gamma=\omega_1$. By Theorem \ref{nyikos} $(L_\alpha,R_\alpha)_{\alpha<\omega_1}$ cannot be tight. Thus, there is $A\subseteq \omega$ 
	such that $L_\alpha \cap A =^* \varnothing$ and $A\subseteq^* R_\alpha$ for every $\alpha<\omega_1$. So, $A$ is not split by $\mathscr{C}$. 
\end{proof}

\begin{cor}
$\mathsf{PFA}$ implies that there is no splitting chain. \end{cor}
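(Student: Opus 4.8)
The plan is simply to check that $\mathsf{PFA}$ supplies all three hypotheses of Theorem~\ref{PFA} and then quote that theorem. First, since every ccc poset is proper, applying $\mathsf{PFA}$ to the class of ccc posets immediately gives Martin's Axiom for $\aleph_1$ dense sets, so $\mathsf{MA}(\omega_1)$ holds. Second, I would invoke the theorem (Todorcevic, Veli\v{c}kovi\'{c}) that $\mathsf{PFA}$ decides the continuum, namely $\mathfrak{c}=\omega_2$; I would cite this rather than reprove it. Third, I would use the fact that under $\mathsf{PFA}$ every gap in $\mathscr{P}(\omega)/\fin$ has type $(\omega_1,\omega_1)$ — in particular there are no $(\omega_1,\omega_2)$-gaps. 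With these three facts, Theorem~\ref{PFA} applies verbatim and yields that there is no splitting chain.

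The only ingredient that is not an immediate unwinding of the definition of $\mathsf{PFA}$ is the absence of $(\omega_1,\omega_2)$-gaps, and here one must be careful: $\mathsf{MA}(\omega_1)$ alone does not suffice, because the natural poset that interpolates such a gap requires meeting $\omega_2$ many dense sets. Eliminating $(\omega_1,\omega_2)$-gaps genuinely calls on a proper, non-ccc forcing (adding an interpolant via countable conditions), or else on the gap-spectrum analysis available in the literature under $\mathsf{PFA}$; this is the step I would be most careful to reference precisely (it is also discussed, with the relevant background, in Nyikos's work cited above). Everything else is bookkeeping: once the three hypotheses are in place there is nothing left to do but cite Theorem~\ref{PFA}.

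So I do not expect a substantive obstacle in the proof itself — the work is in pinning down the correct citations for the two nontrivial consequences of $\mathsf{PFA}$ (the value of $\mathfrak{c}$ and the gap spectrum), and in flagging that the gap hypothesis is where $\mathsf{PFA}$ is really being used beyond $\mathsf{MA}(\omega_1)$.
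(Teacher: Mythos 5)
Your proposal is correct and follows the same route as the paper: the paper's proof consists precisely of observing that $\mathsf{PFA}$ supplies the three hypotheses of Theorem~\ref{PFA} and citing \cite[Theorem 8.6]{Todorcevic89} for this, which is exactly what you do (with more detail on each ingredient, and a correct caveat that the $(\omega_1,\omega_2)$-gap elimination genuinely goes beyond $\mathsf{MA}(\omega_1)$).
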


\begin{proof} $\mathsf{PFA}$ implies that the assumptions of Theorem \ref{PFA} are satisfied, see e.g. \cite[Theorem 8.6]{Todorcevic89}.
\end{proof}

\begin{remark} Of course Theorem~\ref{PFA} means that there is no splitting chain of clopens in $\omega^*$. In \cite{Nyikos} Nyikos proved a stronger theorem: under the assumptions of Theorem~\ref{PFA} the space $\omega^*$ does not have a tunnel (cf.\ Remark~\ref{clopen}).
\end{remark}

In the light of the above results it is natural to ask if the existence of a splitting chain implies $\mathsf{CH}$. First, notice that
the proof of Proposition \ref{CH} uses $\mathsf{CH}$ in an essential way. The reason is that, by the classical result of Hausdorff, there are $(\omega_1, \omega_1)$-gaps in $\mathsf{ZFC}$. So, to construct a splitting chain by a transfinite
induction longer than $\omega_1$ we would have to keep control on the cuts appearing in the construction at steps of cofinality $\omega_1$ to avoid a situation in which we have constructed a non-tight gap in our chain. This seems to be a hopeless task.  

What is worse, the gaps constructed by Hausdorff are
indestructible, i.e.\ we cannot interpolate them even in $\omega_1$-preserving extensions of the universe  (see e.g.~\cite[Section 2]{Scheepers}). So, even in the forcing constructions we have to be quite careful. 

We will show two ways to avoid this problem. In the first construction, showing that splitting chains exist in the standard Cohen model, we will add generically elements of the chain, ensuring that all uncountable cuts which show up are tight and that their tightness will not be killed later on. In the second
construction we will have to change our method (as we want to have $\mathfrak{p}>\omega_1$ in the final model). This time we will keep all the gaps in the constructed chain destructible. In this way for every cut we will be able to split
(generically) sets spreading it. 

\subsection{Splitting chains after adding Cohen reals.}

Let $\mathbb{C}_{\kappa}$ be the forcing with $\kappa$ Cohen reals. We are going to prove the following.

\begin{thm}\label{cohen}
	If $\kappa$ is of uncountable cofinality, then in $V^{\mathbb{C}_{\kappa}}$ there is a splitting chain.
\end{thm}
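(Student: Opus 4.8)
The plan is to imitate the recursive construction of Proposition~\ref{CH}, but carried out along the $\kappa$ coordinates of $\mathbb{C}_\kappa$, using the Cohen reals to insert generic interpolants into the chain. By Proposition~\ref{splitting-tight} it is enough to produce, in $V^{\mathbb{C}_\kappa}$, a $\subseteq^*$-chain $\mathscr{C}$ in which every cut is tight. Think of $\mathbb{C}_\kappa$ as the finite support product of $\kappa$ copies of Cohen forcing, let $\langle c_\alpha:\alpha<\kappa\rangle$ be the generic sequence of Cohen reals and let $G_\alpha=G\restriction\alpha$ be the generic restricted to the first $\alpha$ coordinates. I will build a $\subseteq$-increasing sequence $\langle\mathscr{C}_\alpha:\alpha<\kappa\rangle$ with $\mathscr{C}_\alpha\in V[G_\alpha]$, each $\mathscr{C}_\alpha$ a $\subseteq^*$-chain of size at most $|\alpha|+\aleph_0$, and set $\mathscr{C}=\bigcup_{\alpha<\kappa}\mathscr{C}_\alpha$. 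The hypothesis $\mathrm{cf}(\kappa)>\omega$ enters at once: since $\mathbb{C}_\kappa$ is ccc, a name for a subset of $\omega$ only mentions countably many coordinates, so every infinite $A\subseteq\omega$ of $V^{\mathbb{C}_\kappa}$ actually belongs to $V[G_\lambda]$ for some $\lambda<\kappa$, and can therefore be ``caught'' by the construction at some stage $\geq\lambda$.

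At stage $\alpha$ I am handed, by a bookkeeping function, a task: either a countable pre-gap $(\mathscr{L},\mathscr{R})$ inside $\mathscr{C}_\alpha$ that must be interpolated, or an infinite set $A\in V[G_\lambda]$ (some $\lambda\leq\alpha$) that must be split, or both at once --- say, split $A$ at the position of a prescribed interpolable cut. Using $c_\alpha$, which is Cohen over $V[G_\alpha]$, one produces a set $C_\alpha$ that is a \emph{generic} interpolant of the chosen cut of $\mathscr{C}_\alpha$ and, in addition, splits $A$: this is exactly the trick $N\rightsquigarrow N\cup H$ or $N\rightsquigarrow N\setminus H$ from the proof of Proposition~\ref{CH}, now performed with a Cohen-generic interpolant $N$ in place of an arbitrary one. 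One checks routinely that $\mathscr{C}_{\alpha+1}=\mathscr{C}_\alpha\cup\{C_\alpha\}$ is again a $\subseteq^*$-chain. Running the bookkeeping over all countable pre-gaps and all infinite sets that appear, one gets that in the end $\mathscr{C}$ is densely $\subseteq^*$-ordered, every countable cut of $\mathscr{C}$ is interpolated (there are no $(\omega,\omega)$-gaps anyway), and every infinite $A\subseteq\omega$ of $V^{\mathbb{C}_\kappa}$ is split by some member of $\mathscr{C}$ --- which, by Proposition~\ref{splitting-tight}, already says every cut of $\mathscr{C}$ is tight.

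The real work is in making sure this bookkeeping can actually be completed, i.e.\ that the recursion never paints itself into a corner. The danger is a cut $(\mathscr{L},\mathscr{R})$ of $\mathscr{C}_\alpha$ which is an \emph{un-interpolable} gap but has a spreading set $A$ among the reals yet to appear: to split such an $A$ by a chain one would have to add an interpolant of $(\mathscr{L},\mathscr{R})$, which is impossible. Since the countable cuts are being interpolated as we go, the only un-interpolable gaps that can arise are $(\omega_1,\omega_1)$-gaps created at limit stages of cofinality $\omega_1$ --- unavoidable, since Hausdorff gaps exist and are indestructible. So the construction must, at every such limit stage $\delta$, arrange that the $(\omega_1,\omega_1)$-gap it is forced to leave in the chain is \emph{tight}, and stays tight under the remaining Cohen coordinates. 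The mechanism is that the two cofinal $\omega_1$-towers making up the gap are themselves built out of Cohen reals with the splitting bookkeeping active: a would-be spreading set $A$ that appears before stage $\delta$ is then killed by some tower element added afterwards (which was chosen to split $A$), while a spreading set appearing after the gap is complete is ruled out by a preservation fact --- Cohen forcing over a model $W$ adds no set spreading a tight $(\omega_1,\omega_1)$-gap of $W$ that was built generically in this fashion. (Recall that $\mathfrak{p}=\omega_1$ in the Cohen model, consistently with the existence of tight $(\omega_1,\omega_1)$-pre-gaps by Theorem~\ref{nyikos}.)

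The main obstacle, then, is precisely the organization and verification of this last point: setting up the bookkeeping so that the only cuts the final chain must carry un-interpolated are $(\omega_1,\omega_1)$-gaps whose tightness is under control, and proving the associated Cohen-indestructibility of tightness. This is what is meant by ``ensuring that all uncountable cuts which show up are tight and that their tightness will not be killed later on''. Everything else --- the amalgamation lemma producing, from one Cohen real, a generic interpolant of a prescribed interpolable cut that in addition splits a prescribed compatible set; the verification that each $\mathscr{C}_\alpha$ is a chain; and the bookkeeping arithmetic (which is where one really uses $\mathrm{cf}(\kappa)>\omega$, so that every real, having appeared at a bounded stage, can be scheduled cofinally often below every relevant limit) --- is routine.
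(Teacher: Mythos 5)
Your overall plan coincides with the paper's: view $\mathbb{C}_\kappa$ as a length-$\kappa$ finite-support iteration, use each Cohen coordinate as a \emph{generic} interpolant of a chosen countable cut of the chain built so far, run a bookkeeping so that no countable cut survives to the end, and reduce the splitting property to tightness of all remaining cuts via Proposition~\ref{splitting-tight}. But at exactly the decisive point the proposal stops short: the preservation statement you invoke --- that the later Cohen coordinates add no set spreading the tight gaps the chain is forced to carry --- is never proved; you formulate it only for ``$(\omega_1,\omega_1)$-gaps built generically in this fashion'' and explicitly defer its verification as ``the main obstacle''. That statement is the heart of the theorem, not a routine detail: the paper devotes Theorem~\ref{cohen-tight} to it, showing by a Baire-category argument on a continuous reading of a name that adding \emph{any} number of Cohen reals cannot add a set spreading an \emph{arbitrary} tight pre-gap. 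No genericity of the gap can be assumed there, because the inductive argument has to apply the preservation fact to every tight uncountable cut of every intermediate chain, however it arose.

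The restriction to $(\omega_1,\omega_1)$-gaps is also too narrow. Once $\kappa>\omega_1$ (and already for cuts of mixed type such as $(\omega,\omega_1)$) the final chain has uncountable cuts of many other cofinality types, and each of them must be tight, since a set spreading \emph{any} cut cannot be split by the chain. The paper's inductive claim $(\star_\alpha)$ treats all uncountable cuts uniformly: if tightness first fails at $\alpha$, then --- because Cohen forcing adds no new uncountable pre-gaps and, by Theorem~\ref{cohen-tight}, does not spoil tight ones --- $\alpha$ has uncountable cofinality, the offending spreading set $A$ already lives at some stage $\xi<\alpha$, by the induction hypothesis the cut of the current chain that $A$ spreads must be countable, and when the bookkeeping interpolates (a cut equivalent to) it at some stage in $[\xi,\alpha)$, Lemma~\ref{coh} guarantees that the generic interpolant splits $A$, a contradiction. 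Note that the bookkeeping runs over countable cuts only; your additional plan to schedule every infinite set that appears is unnecessary (Lemma~\ref{coh} splits all ground-model spreading sets of the interpolated cut at once) and potentially impossible, since the reals of the intermediate models may outnumber the remaining stages --- the paper instead assumes $\mathsf{CH}$ in the ground model and counts countable cuts. So the skeleton is the right one, but the two load-bearing steps --- the general Cohen-indestructibility of tightness and the uniform treatment of all uncountable cuts --- are missing from your argument.
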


This result was mentioned by Nyikos in
\cite{Nyikos}. He announced that its proof would appear in a
later paper which however never appeared. 

%In a sense the proof of the above result will resemble the proof of Proposition \ref{CH} - we will interpolate countable pre-gaps, but this time we will do it generically to increase $\mathfrak{c}$.

First, we will recall the standard forcing adding a set interpolating a given gap. 
\begin{defi}\label{P_G}
Let $\mathscr{G} = (\mathscr{L},\mathscr{R})$ be a pre-gap.
Let $\mathbb{P}_{\mathscr{G}}$ be defined in the following way: $p\in \mathbb{P}_\mathscr{G}$ if $p=(L_p, R_p,s_p)$, where
\begin{itemize}
	\item $s_p\in 2^{<\omega}$,
	\item $L_p$, $R_p$ are finite subsets of $\mathscr{L}$ and $\mathscr{R}$, respectively,
\item $L\setminus |s_p| \subseteq R$ for each $L\in L_p$ and $R\in R_p$.
\end{itemize} Denote $F_p = \{n\colon s_p(n)=1\}$. Now, $p\leq q$ if 
\begin{itemize}
	\item $s_q\subseteq s_p$,
	\item $L_q\subseteq L_p$, $R_q\subseteq R_p$,
	\item $\bigcup L_q \setminus |s_p| \subseteq F_p \setminus |s_p| \subseteq \bigcap R_q$.
\end{itemize}
\end{defi}

Say that a gap $\mathscr{G}$ is \emph{destructible} if there is a ccc (or just $\omega_1$-preserving) forcing notion which adds a set interpolating~$\mathscr{G}$. In fact, as the following fact shows, if there is such forcing interpolating the gap, then the above one would do the job, too.

\begin{fact}(\cite{Scheepers}) \label{char-destructibility} A gap $\mathscr{G}$ is destructible if and only if the forcing $\mathbb{P}_\mathscr{G}$ is ccc.
\end{fact}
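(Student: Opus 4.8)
The plan is to prove both implications. The easy direction is ($\Leftarrow$): if $\mathbb{P}_\mathscr{G}$ is ccc, then it is an $\omega_1$-preserving forcing, and I claim its generic filter produces an interpolating set. Given a generic $G$ for $\mathbb{P}_\mathscr{G}$, set $F=\bigcup\{F_p\colon p\in G\}$. A density argument shows $F$ works: for each $L\in\mathscr{L}$ the set of conditions with $L\in L_p$ is dense (given $q$, extend $s_q$ past $\max(L\cap|s_q|^c\text{-region})$ appropriately and add $L$ to $L_q$; the finite-side constraint $L\setminus|s_p|\subseteq R$ holds for $R\in R_q$ by the pre-gap property after lengthening $s$), and then the extension ordering forces $L\setminus|s_p|\subseteq F_p\subseteq F$, hence $L\subseteq^* F$; symmetrically $F\subseteq^* R$ for each $R\in\mathscr{R}$. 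So $F$ interpolates $\mathscr{G}$, and by definition $\mathscr{G}$ is destructible.

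The substantial direction is ($\Rightarrow$): suppose some ccc (or merely $\omega_1$-preserving) poset $\mathbb{Q}$ adds an interpolating set $\dot F$; I must show $\mathbb{P}_\mathscr{G}$ itself is ccc. Here I would argue by contradiction: let $\{p_\alpha\colon\alpha<\omega_1\}$ be an antichain in $\mathbb{P}_\mathscr{G}$. By a $\Delta$-system / pigeonhole refinement I may assume all $s_{p_\alpha}$ equal a fixed $s$, the sets $L_{p_\alpha}$ and $R_{p_\alpha}$ form $\Delta$-systems with roots $L^*,R^*$, and (shrinking further) that for distinct $\alpha,\beta$ the conditions $p_\alpha,p_\beta$ are incompatible only because of the side conditions, i.e.\ because there is no $t\in 2^{<\omega}$ with $t\supseteq s$ and $\bigcup(L_{p_\alpha}\cup L_{p_\beta})\setminus|t|\subseteq F_t\setminus|t|\subseteq\bigcap(R_{p_\alpha}\cup R_{p_\beta})$. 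Now pass to the extension $V^\mathbb{Q}$ with its interpolating set $F$. Since $\omega_1$ is preserved, $\{p_\alpha\colon\alpha<\omega_1\}$ is still uncountable there. For each $\alpha$, since $L\subseteq^* F$ for $L\in L_{p_\alpha}$ and $F\subseteq^* R$ for $R\in R_{p_\alpha}$ (finitely many of each), there is $n_\alpha<\omega$ with $\bigcup L_{p_\alpha}\setminus n_\alpha\subseteq F\subseteq^{} \bigcap R_{p_\alpha}$ above $n_\alpha$ — more precisely $\bigcup L_{p_\alpha}\setminus n_\alpha\subseteq F\setminus n_\alpha\subseteq\bigcap R_{p_\alpha}\cup n_\alpha$ in the appropriate sense. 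Pinning $n_\alpha=n$ on an uncountable set and letting $t=F\restriction n$ (extended past $|s|$ and made compatible with $s$ — this requires $s\subseteq F$, which I arrange by first forcing $s$ into the generic data, legitimate since one can assume the antichain conditions all extend a fixed condition deciding $s$), I get that $t$ witnesses compatibility of $p_\alpha$ and $p_\beta$ for any two such $\alpha,\beta$, contradicting incompatibility. Thus no uncountable antichain exists.

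The main obstacle I anticipate is the bookkeeping in the ($\Rightarrow$) direction: making sure that the finitary stem $t=F\restriction n$ extracted from the interpolating set in $V^\mathbb{Q}$ is genuinely a legal common extension datum for the two conditions — one must handle the interaction between the fixed stem $s$ (from the $\Delta$-system normalization) and the requirement $s\subseteq s_{\text{ext}}=t$, and verify $F_t\setminus|s|$ simultaneously contains $\bigcup L_{p_\alpha}\setminus|s|$ and is contained in $\bigcap R_{p_\alpha}$ for \emph{both} $\alpha$ and $\beta$. This is exactly where the hypothesis that $F$ interpolates the \emph{whole} gap (not just pieces of it) is used, and where $\omega_1$-preservation is essential to keep the antichain uncountable in the extension. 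I would cite \cite{Scheepers} for the detailed combinatorics, as the statement is attributed there.
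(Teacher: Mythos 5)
The paper states this as a cited Fact (from \cite{Scheepers}) and supplies no proof, so there is no internal argument to compare against; I can only assess the outline on its own merits.

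Your $(\Leftarrow)$ direction is fine: a ccc poset preserves $\omega_1$, and the density argument you sketch shows the generic set $F=\bigcup_{p\in G}F_p$ interpolates $\mathscr{G}$ — this is essentially what Lemma~\ref{coh} of the paper records. Your $(\Rightarrow)$ direction, however, has a real gap at exactly the spot you flag as ``the main obstacle,'' and as arranged the argument does not close it. After the $\Delta$-system normalization to a fixed stem $s$ and after pinning $n_\alpha=n$, what you actually know for $L\in L_{p_\alpha}$, $R\in R_{p_\beta}$ is the inclusion $L\setminus n\subseteq F\setminus n\subseteq R$. But compatibility of $p_\alpha$ and $p_\beta$ in $\mathbb{P}_\mathscr{G}$ requires the stronger inclusion $L\setminus|s|\subseteq R$: any common extension must set $t(i)=1$ for every $i\in\bigcup L_{p_\alpha}\cap[|s|,|t|)$ and $t(i)=1$ only for $i\in\bigcap R_{p_\beta}$, and nothing you have controls the interval $[|s|,n)$. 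Taking $t=F\restriction n$ does not help, because you have no information relating $\bigcup L_{p_\alpha}\cap[|s|,n)$ to $F$ or to $\bigcap R_{p_\beta}$ there.

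The correct move is to reverse the order of operations: use $F$ \emph{before} the $\Delta$-system, not after. Work in $V^{\mathbb{Q}}$ from the outset (the antichain is still uncountable there). For each $\alpha$ pick $m_\alpha\geq|s_{p_\alpha}|$ large enough that $L\setminus m_\alpha\subseteq F\setminus m_\alpha\subseteq R$ for every $L\in L_{p_\alpha}$, $R\in R_{p_\alpha}$, and extend $p_\alpha$ to $p'_\alpha$ with the \emph{same} side families but a stem of length $\geq m_\alpha$; this is always possible (fill the new coordinates with $1$ exactly on $\bigcup L_{p_\alpha}$ — legal because $p_\alpha$ is a valid condition). Since $p'_\alpha\leq p_\alpha$, the family $\{p'_\alpha\}$ is still an uncountable antichain. \emph{Now} $\Delta$-systematize and pin the stems to a fixed $s$. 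For $\alpha\neq\beta$ in the resulting uncountable set and any $L\in L_{p'_\alpha}$, $R\in R_{p'_\beta}$, one has $|s|\geq m_\alpha$ and $|s|\geq m_\beta$, hence $L\setminus|s|\subseteq F\setminus|s|\subseteq R$. So $\bigl(L_{p'_\alpha}\cup L_{p'_\beta},\,R_{p'_\alpha}\cup R_{p'_\beta},\,s\bigr)$ is already a common extension — no further stem manipulation is needed. Compatibility in $\mathbb{P}_\mathscr{G}$ is witnessed by a finite object over $\mathscr{L}\cup\mathscr{R}\subseteq V$ and is therefore absolute, so $p_\alpha$ and $p_\beta$ were already compatible in $V$, a contradiction. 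This small reordering is the combinatorial content you were deferring to \cite{Scheepers}.
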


%Notice, that if $\mathscr{G}$ is a
%countable pre-gap, then $\mathbb{P}_\mathscr{G}$ is an incarnation of the Cohen forcing.

\begin{lem}\label{coh} Assume that $\mathscr{G} = (\mathscr{L},\mathscr{R})$ a pre-gap. Then  $\mathbb{P}_\mathscr{G}$ adds generically a name $\dot{S}$ for a set interpolating $\mathscr{G}$. Moreover, $\dot{S}$ splits each set spreading $\mathscr{G}$ from the ground model.
	If $\mathscr{G}$ is countable, i.e. $|\mathscr{L}\cup\mathscr{R}|\leq \omega$, then $\mathbb{P}_\mathscr{G}$ is just  a Cohen forcing.
\end{lem}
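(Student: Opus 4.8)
The plan is to verify each assertion of Lemma~\ref{coh} in turn, starting from the definition of $\mathbb{P}_{\mathscr{G}}$ in Definition~\ref{P_G}. First I would introduce the canonical name $\dot S$ for the set $\bigcup\{F_p : p \in \dot G\}$, where $\dot G$ is the name for the generic filter; equivalently, $\dot S = \{n : \exists p\in\dot G\ s_p(n)=1 \text{ and } |s_p|>n\}$. To see that $\dot S$ interpolates $\mathscr{G}$, fix $L\in\mathscr{L}$ and note that the sets
$$D_L = \{p \in \mathbb{P}_{\mathscr{G}} : L\in L_p\}$$
are dense: given any $q$, one can add $L$ to $L_q$, possibly after first extending $s_q$ so that the finiteness condition $L\setminus|s_q|\subseteq R$ holds for all $R\in R_q$ (this is possible because $(\mathscr{L},\mathscr{R})$ is a pre-gap, so $L\subseteq^* R$). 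Once $L\in L_p$ for some $p$ in the generic filter, every further condition $r\leq p$ satisfies $\bigcup L_r\setminus|s_r|\subseteq F_r$, which forces $L\setminus\dot S$ to be finite, i.e. $L\subseteq^*\dot S$. Symmetrically, density of $\{p : R\in R_p\}$ together with the clause $F_p\setminus|s_p|\subseteq\bigcap R_p$ forces $\dot S\subseteq^* R$ for every $R\in\mathscr{R}$. So $\dot S$ is an interpolant.

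Next I would handle the splitting claim. Let $A$ in the ground model spread $\mathscr{G}$, so $A\cap L =^*\varnothing$ and $A\subseteq^* R$ for all $L\in\mathscr{L}$, $R\in\mathscr{R}$; in particular $A$ is infinite. I claim that for each $n$ the sets
$$E_n^{+} = \{p : \exists m>n,\ m\in A,\ m\in F_p,\ |s_p|>m\}, \qquad E_n^{-} = \{p : \exists m>n,\ m\in A,\ m\notin F_p,\ |s_p|>m\}$$
are dense. Indeed, given $q$, the constraint on extending $s_q$ is only that we must keep $\bigcup L_q\setminus|s|\subseteq F \setminus|s| \subseteq\bigcap R_q$ for the new stem $s$; since $A$ is almost disjoint from every $L\in L_q$ and almost contained in every $R\in R_q$, for all sufficiently large $m\in A$ we have $m\notin\bigcup L_q$ and $m\in\bigcap R_q$, so we are free to put $m$ into $F$ or to leave it out, as we wish. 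Picking such an $m>n$ and extending the stem to length $m+1$ accordingly gives an extension of $q$ in $E_n^{+}$ (resp.\ $E_n^{-}$). By genericity $\dot S\cap A$ and $A\setminus\dot S$ are both infinite, i.e. $\dot S$ splits $A$.

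Finally, for the last sentence: if $\mathscr{L}\cup\mathscr{R}$ is countable, I would exhibit a dense subset of $\mathbb{P}_{\mathscr{G}}$ isomorphic to Cohen forcing. Fix an enumeration $\mathscr{L}=\{L_k\}$, $\mathscr{R}=\{R_k\}$ (allowing repetitions or one of them finite). The conditions of the form $p$ with $L_p = \{L_0,\dots,L_{k-1}\}$ and $R_p=\{R_0,\dots,R_{k-1}\}$ for some $k$ (a ``diagonal'' choice of the finite subsets) are dense by the density argument above, and among these, $p\leq q$ iff $s_q\subseteq s_p$ and the $k$ of $p$ is at least that of $q$ and the interpolation constraints are met; mapping such $p$ to its stem $s_p\in 2^{<\omega}$ and noting that the extra bookkeeping only finitely restricts which extensions of the stem are allowed, one checks this poset is countable, atomless and separative, hence forcing-equivalent to $2^{<\omega}$, i.e.\ to Cohen forcing. (Alternatively, one invokes the classical fact that every countable atomless separative poset is Cohen forcing, after checking $\mathbb{P}_{\mathscr{G}}$ is $\sigma$-centered and has a countable dense atomless subset when $\mathscr{G}$ is countable.)

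The main obstacle is the bookkeeping in the density arguments, in particular making sure that when we extend the stem $s_q$ we can always satisfy the coherence requirement $\bigcup L_q\setminus|s|\subseteq F\setminus|s|\subseteq\bigcap R_q$ simultaneously with forcing the desired membership of some $m\in A$ into or out of $F$; this works precisely because $A$ spreads the gap and $L_q, R_q$ are finite, so only finitely many ``bad'' coordinates below the relevant threshold need to be respected, and they are disjoint from a tail of $A$. The identification with Cohen forcing in the countable case is routine modulo the standard uniqueness theorem for countable atomless separative forcings.
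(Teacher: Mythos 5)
Your proof follows the same route as the paper's: define $\dot S=\bigcup_{p\in G}F_p$, argue interpolation via density of conditions mentioning a given $L$ or $R$, argue splitting via density of conditions forcing a large $m\in A$ in or out of the stem (possible because $A$ spreads the pre-gap and $L_q,R_q$ are finite), and for the countable case observe that $\mathbb{P}_{\mathscr{G}}$ is then a countable atomless poset, hence Cohen. Your dense sets $E_n^{\pm}$ are in fact stated more carefully than the paper's $D_n$ (which, as printed, asks for $m'\in F_p\setminus A$, witnessing $\dot S\setminus A$ infinite rather than the needed $A\setminus\dot S$ infinite), but this is clearly just a slip in the paper and your version is the correct one.
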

\begin{proof} If $G$ is a $\mathbb{P}_\mathscr{G}$-generic and $\dot{S}$ is a name for $\bigcup_{p\in G} F_p$, then
	\[ \Vdash_{\mathbb{P}_\mathscr{G}}\mbox{``} \dot{S} \mbox{ interpolates }\mathscr{G}\mbox{"}.\]
	Now, assume that $A\in V$ spreads $\mathscr{G}$. Since \[ D_n = \{p\in \mathbb{P}_\mathscr{G}\colon \exists m>n \ m\in F_p \cap A \mbox{ and } \exists m'>n \ m'\in F_p \setminus A\} \] is dense for each $n$, 
	\[ \Vdash_{\mathbb{P}_\mathscr{G}} \mbox{``}\dot{S} \mbox{ splits } A\mbox{"}.\]
	Clearly, if $\mathscr{G}$ is countable, then $\mathbb{P}_\mathscr{G}$ is countable and atomless, so it is isomorphic to the Cohen forcing. 
\end{proof}

We will need one more fact. It is known that a Cohen forcing does not destroy towers (see e.g. \cite[Theorem 2.5]{Hirschorn00}). The argument can be easily modified to show the following. 

\begin{thm}\label{cohen-tight}
	Let $\mathscr{G}$ be a tight pre-gap. Adding any number of Cohen reals cannot add a subset spreading $\mathscr{G}$.
\end{thm}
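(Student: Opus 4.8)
The plan is to adapt the familiar fact that Cohen forcing does not destroy towers. Write $\mathscr G=(\mathscr L,\mathscr R)$ and note that $A$ spreads $\mathscr G$ exactly when $A\subseteq^{*}R\setminus L$ for all $R\in\mathscr R$, $L\in\mathscr L$, so that $\mathscr G$ is tight iff no infinite subset of $\omega$ has this property. Since every new subset of $\omega$ added by $\mathbb C_{\kappa}=\mathrm{Fn}(\kappa,2)$ already lies in the model added by $\mathrm{Fn}(C,2)$ for some countable $C\subseteq\kappa$, and $\mathrm{Fn}(C,2)$ is (isomorphic to) the standard countable Cohen forcing $\mathbb C$, it is enough to show: if $\mathscr G$ is tight in $V$ and $c$ is $\mathbb C$-generic over $V$, then no infinite member of $V[c]$ spreads $\mathscr G$. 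So suppose some $p\in\mathbb C$ forces $\dot A$ to be infinite and to spread $\mathscr G$; we shall derive a contradiction by producing, from $\dot A$, an infinite set in $V$ that spreads $\mathscr G$.

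The key device is a pigeonhole that, because $\mathbb C$ is countable, collapses the behaviour of $\dot A$ along a whole side of $\mathscr G$ to a single condition. For instance, assume $\mathscr R$ has coinitiality $1$ or an uncountable regular cardinal and fix a coinitial sequence $\langle R_{\xi}:\xi<\kappa\rangle$ in $\mathscr R$. For each $\xi$ choose a name $\dot k_{\xi}$ with $p\Vdash\dot A\setminus\check R_{\xi}\subseteq[0,\dot k_{\xi})$, let $m(\xi)$ be the least $m$ such that some $q\leq p$ forces $\dot k_{\xi}\leq m$, and let $q(\xi)\leq p$ witness this. The map $\xi\mapsto(m(\xi),q(\xi))$ has countable range, hence is constant, say with value $(m^{*},q^{*})$, on a cofinal $S\subseteq\kappa$ (trivially if $\kappa=1$). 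Then $q^{*}\Vdash\dot A\cap[m^{*},\omega)\subseteq\check R_{\xi}$ for all $\xi\in S$, so $q^{*}$ forces $\dot A\cap[m^{*},\omega)$ into $\rho:=\bigcap_{\xi\in S}R_{\xi}\in V$; as $q^{*}\leq p$ forces $\dot A$ infinite, $\rho$ is infinite, and $\rho\subseteq^{*}R$ for every $R\in\mathscr R$ since $S$ is cofinal in $\kappa$. The same pigeonhole applied to $\mathscr L$ yields, whenever $\mathscr L$ has cofinality $1$ or uncountable regular, a condition $q_{1}\leq p$ and a set $\Lambda\in V$ with $L\subseteq^{*}\Lambda$ for all $L\in\mathscr L$ and $q_{1}\Vdash\dot A\cap\check\Lambda$ finite (here $\Lambda$ is a union of cofinally many terms of a cofinal sequence of $\mathscr L$).

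Now the coinitiality of $\mathscr R$ and the cofinality of $\mathscr L$ are each $1$, $\omega$, or uncountable regular, and they are not both $\omega$: an $(\omega,\omega)$-pre-gap already has a spreading set in $V$, namely a pseudo-intersection of the $\subseteq$-decreasing $\omega$-sequence $\langle R_{n}\setminus L_{n}\rangle$ for suitably $\subseteq$-monotone (co)initial $\langle R_{n}\rangle,\langle L_{n}\rangle$. If one side, say $\mathscr L$, has cofinality $\omega$, fix a $\subseteq$-increasing cofinal $\langle L_{n}\rangle$ in $\mathscr L$; the pigeonhole above (valid on the other side $\mathscr R$) gives an infinite $\rho\in V$ with $\rho\subseteq^{*}R$ for all $R\in\mathscr R$ and some $q_{1}\leq p$ forcing $\dot A\subseteq^{*}\check\rho$, and since $q_{1}$ also forces $\dot A\cap\check L_{n}$ finite, each $\rho\setminus L_{n}$ is infinite; any pseudo-intersection $A^{*}\in V$ of the $\subseteq$-decreasing $\omega$-sequence $\langle\rho\setminus L_{n}\rangle$ is then infinite, is $\subseteq^{*}R$ for all $R\in\mathscr R$, and is almost disjoint from every $L\in\mathscr L$, so $A^{*}$ spreads $\mathscr G$. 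Otherwise both sides have (co)initiality $1$ or uncountable regular: apply the pigeonhole first to $\mathscr L$ to get $q_{1}$ and $\Lambda$, then to $\mathscr R$ below $q_{1}$ (using $q_{1}\Vdash\dot A\subseteq^{*}\check R_{\xi}\setminus\check\Lambda$ for all $\xi$) to obtain an infinite $T^{*}=(\bigcap_{\xi\in S}R_{\xi})\setminus\Lambda\in V$ with $T^{*}\subseteq^{*}R$ for all $R\in\mathscr R$ and $T^{*}\cap\Lambda=\varnothing$, whence $T^{*}$ spreads $\mathscr G$. Either way we contradict tightness.

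The only step with real content is the pigeonhole collapsing a cofinal/coinitial regular sequence of one side to a single condition and a single ground-model set — this is exactly the argument used to show that Cohen reals do not destroy towers, and it is where countability of $\mathbb C$ enters. The remaining obstacle is purely organisational: the reformulation via $R\setminus L$, the trichotomy $1/\omega/\text{regular uncountable}$ for the two sides, and checking in each case that the extracted ground-model set is genuinely infinite and genuinely spreads $\mathscr G$.
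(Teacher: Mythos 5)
Your argument is correct, and it reaches the goal by a genuinely different route than the paper. The paper first translates the claim into a statement about a Borel function $f\colon 2^\omega\to[\omega]^\omega$ (the "reading of the name''), passes to a comeager set $G$ on which $f$ is continuous, and applies the Baire category theorem: for each $\alpha$ one extracts a pair $(n_\alpha,U_\alpha)$ with $U_\alpha$ a basic open set from a fixed countable base, and the pigeonhole is then run over the countably many such pairs; the two cases (one side countable vs.\ both uncountable) are handled by a comeager intersection and a second Baire/pigeonhole step, respectively. You instead work directly with the poset: the pigeonhole is run over pairs $(m(\xi),q(\xi))\in\omega\times\mathbb C$, which has the same countability at its heart but skips the reformulation through Borel functions entirely. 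Your trichotomy $1/\omega/\text{uncountable regular}$ for each side, and the observation that both cannot be $\omega$ for a tight pre-gap, match what the paper does when it "assumes $\kappa$ and $\lambda$ regular'' and rules out the $(\omega,\omega)$ case. The two approaches buy slightly different things: the paper's Borel/Baire phrasing makes the argument forcing-free once one accepts the standard translation, which is convenient when one already thinks of Cohen reals as generic points of $2^\omega$; your version is more self-contained (no appeal to continuity on a comeager set or to the Baire category theorem) and a bit more elementary, at the cost of carrying forcing notation through the whole argument. Both rely on the same two nontrivial facts -- that every real in $V^{\mathbb C_\kappa}$ lies in a countable subextension, and that a countably-valued function on an uncountable regular cardinal is constant on a cofinal set -- so the underlying mechanism is the same even though the execution is not.

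One small thing worth spelling out if you write this up: in the case where both sides have uncountable regular (co)initiality, when you run the second pigeonhole "below $q_1$'' you should make explicit that the resulting $q_2\leq q_1$ simultaneously forces $\dot A\cap\check\Lambda$ finite \emph{and} $\dot A\cap[m^\ast,\omega)\subseteq\check\rho$, so that $\dot A$ is forced almost inside $T^\ast=\rho\setminus\Lambda$; this is exactly what guarantees $T^\ast$ is infinite, and it is the point where nesting the two pigeonholes (rather than running them independently below $p$) is essential.
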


\begin{proof} Denote by $\mathbb{C}$ the Cohen forcing. Assume that $\mathscr{G}= ( (L_\alpha)_{\alpha<\kappa},(R_\alpha)_{\alpha<\lambda} )$ is a tight pre-gap. We may assume that $\kappa$ and $\lambda$ are regular. We will show that
	\[ \Vdash_{\mathbb{C}} \mbox{``}\mathscr{G} \mbox{ is tight"}. \]
%	We will use the standard Cohen names for subsets of $\omega$. 
%Let $l_\alpha\colon 2^\omega \longrightarrow [\omega]^\omega$ be a Borel function such that $l_\alpha(x) = L_\alpha$ for $\alpha<\kappa$. Similarly, let $r_\alpha\colon 2^\omega \longrightarrow
%	[\omega]^\omega$ be the Borel function such that $r_\alpha(x)=R_\alpha$ for $\alpha<\lambda$. 
 Translating our task using the standard Cohen names for subsets of $\omega$ we have to show that there is no Borel function $f\colon 2^\omega \longrightarrow [\omega]^\omega$ such that $\{x\colon f(x)\cap L_\alpha =^* \varnothing\}$ and $\{x\colon
	f(x)\subseteq^* R_\alpha\}$ are comeager. 

	Indeed, suppose that such function $f$ exists and let $G\subseteq 2^\omega$ be a co-meager set such that $f|_{G}$ is continuous. Fix a countable base $\mathscr{U}$ of $2^\omega$. 
	Denote
	\[ L^n_\alpha = \{x\in G\colon f(x)\setminus n \cap L_\alpha = \varnothing\}.\]
	Since $f$ is continuous on $G$, each $L^n_\alpha$ is closed in $G$. By Baire theorem there is $n_\alpha$ and $U_\alpha \in \mathscr{U}$ such that $U_\alpha \cap G \subseteq L^{n_\alpha}_\alpha$.
Since $\mathscr{G}$ is tight, it has to be uncountable and so we may assume that $\lambda$ is uncountable. Hence, we
can find $n$, $U$ and $\Gamma\subseteq \lambda$ cofinal in $\lambda$ such that $n=n_\alpha$ and $U=U_\alpha$ for every $\alpha\in \Gamma$.  

We have to deal with two cases.	
\begin{itemize}
	\item \textbf{$\kappa$ is countable.} Let $R^n = \{x\in 2^\omega\colon f(x) \subseteq^* R_n\}$. Let $R=\bigcap_{n\in \lambda} R^n$. Then $R$ is comeager, since $R^n$ is comeager for each $n\in \lambda$. Pick $x\in R\cap U\cap G$ and let $A=f(x)$. 
Then $A\subseteq^* R_n$ for each $n$ and $L_\alpha \cap A =\varnothing$ for each
$\alpha\in \Gamma$. Thus, $L_\alpha \cap A =^* \varnothing$ for each $\alpha<\lambda$ and so $A$ spreads $\mathscr{G}$, a contradiction.

\item \textbf{$\kappa$ is uncountable.} Then, let \[ R^k_\alpha = \{x\in U\colon f(x)\setminus k \subseteq R_\alpha\}. \]
	Using the same argument as before we can find $n'>n$, $U'\subseteq U$ and $\Gamma'\subseteq \kappa$ cofinal in $\kappa$, such that 
	$U' \cap G \subseteq R^{n'}_\alpha$
	for $\alpha\in \Gamma'$. Let $x\in U'\cap G$ and let $A=f(x)$. Then $A\setminus n' \cap L_\alpha = \varnothing$ for each $\alpha\in \Gamma$ and $A\setminus n' \subseteq R_\alpha$ for each $\alpha\in \Gamma'$. Therefore, $A$ spreads $\mathscr{G}$, a contradiction.

\end{itemize}

	Since each set added by forcing with many Cohen reals can be added by a single Cohen real, we are done.
\end{proof}

\begin{proof}[Proof of Theorem \ref{cohen}] Let $V$ be a model with $\mathsf{CH}$. We will construct an iteration $(\mathbb{P}_\alpha)_{\alpha<\kappa}$ and a sequence $\dot{\mathscr{C}_\alpha}$ of names for $\subseteq^*$-chains such that for every
	$\alpha\leq \kappa$
	\begin{enumerate}
		\item\label{i} $\dot{\mathscr{C}_\alpha} \in V^{\mathbb{P}_\alpha}$,
		\item\label{ii} $\mathbb{P}_{\alpha+1} = \mathbb{P}_\alpha \star \mathbb{P}_{\dot{\mathscr{G}}}$, where $\dot{\mathscr{G}}$ is a
			name for a countable cut in $\dot{\mathscr{C}_\alpha}$, if there are countable cuts in $\dot{\mathscr{C}_\alpha}$
			or $\mathbb{P}_{\alpha+1}=\mathbb{P}_\alpha \star \mathbb{C}$ otherwise (where $\mathbb{C}$ is the standard Cohen forcing),
		\item\label{iii} $\dot{\mathscr{C}}_{\alpha+1}$ is the name for the chain $\dot{\mathscr{C}_\alpha} \cup \{\dot{S}\}$, where $\dot{S}$ is the name for a set added generically by $\mathbb{P}_{\alpha+1}$,
		\item\label{iv} if $\alpha$ is limit, then $\mathbb{P}_\alpha$ is the finite support iteration of $(\mathbb{P}_\xi)_{\xi<\alpha}$ and $\dot{\mathscr{C}_\alpha} = \bigcup_{\xi<\alpha} \dot{\mathscr{C}_\xi}$,
		\item\label{ti} $\Vdash_{\mathbb{P}_\alpha}$ ``each uncountable cut in $\dot{\mathscr{C}}_\alpha$ is tight``,
		\item\label{v} there are no countable cuts in $\dot{\mathscr{C}}_{\kappa}$.
	\end{enumerate}
	
	Let $\mathbb{P}_0$ be the trivial forcing and let $\mathscr{C}_0 =\{\varnothing, \omega\}$. 
	% be a $\subseteq^*$-chain of order type $\omega_1^* + \omega_1$ such that for each infinite $N\subseteq \omega$ there is $C\in \mathscr{C}_0$ such that $N\cap C$ is infinite and
%	$C'\in \mathscr{C}_0$ such that $N\setminus C$ is infinite. 
	Then we can recursively define $\mathbb{P}_\alpha$ and $\dot{\mathscr{C}_\alpha}$
	satisfying (\ref{ii}) and (\ref{iii}), using the standard bookkeeping argument (and the fact that each countable cut in a
	$\subseteq^*$-chain can be interpolated and so there are at most $\mathfrak{c}$ countable cuts in a
	$\subseteq^*$-chain) to satisfy also (\ref{v}). 

	To show that the condition (\ref{ti}) will be satisfied we first prove the following claim.
	\medskip

	\textbf{Claim.} For each $\alpha<\kappa$
	\[ (\star_\alpha) \quad \Vdash_{\mathbb{P}_\alpha} \mbox{``each uncountable cut in }\dot{\mathscr{C}_\alpha}\mbox{ is tight".}\]

	We will prove it by induction on $\alpha$. First, notice that $\mathscr{C}_0$ does not contain any uncountable cut, so $(\star_0)$ is satisfied trivially. Suppose that there is $\alpha\leq \kappa$ such that $(\star_\xi)$ holds for each
	$\xi<\alpha$ and that $(\star_\alpha)$ does not hold. Since Cohen forcing cannot destroy tightness of a pre-gap (thanks to Theorem
	\ref{cohen-tight}) and it cannot add any new uncountable pre-gap (since
	each uncountable set of ordinals in the Cohen extension contains an uncountable subset from the ground model), $\alpha$
	has to be of uncountable cofinality. Let $\mathscr{G}$ be an uncountable cut in $\mathscr{C}_\alpha$ which is not tight and let $A\subseteq \omega$ spreads it. Since $\mathrm{cf}(\alpha)>\omega$, there is $\xi<\alpha$ such that $\dot{A}\in
	V^{\mathbb{P}_\xi}$. There is a cut $\mathscr{G}_0=(\mathscr{L},\mathscr{R})$ in $\mathscr{C}_\alpha$ such that $A$ spreads it. By the induction hypothesis	$\mathscr{G}_0$ has to be countable. Since $\mathscr{G}$ is uncountable, there is $\xi\leq \beta<\alpha$ such that  $\mathbb{P}_{\beta+1}=\mathbb{P}_\beta \star \mathbb{P}_\mathscr{H}$, where $\mathscr{H}$ is equivalent to $\mathscr{G}_0$ (in the sense
	that $\mathscr{H}$ and $\mathscr{G}_0$ have the same family of interpolating sets). But then using Lemma \ref{coh} we get that $\mathscr{C}_{\beta+1}$ splits $A$, a contradiction. The claim is proved.
	\medskip

   Of course the conjunction of (\ref{ti}) and (\ref{v}) implies that
   \[ \Vdash_{\mathbb{P}_{\kappa}} \mbox{ ``every cut in }\dot{\mathscr{C}}_{\kappa}\mbox{ is tight". }\]
   and so, by Proposition \ref{splitting-tight} we are done.
\end{proof}	

\subsection{Splitting chains with big $\mathfrak{p}$}

Theorem \ref{nyikos} and Theorem \ref{PFA} seem to suggest that the existence of splitting chains may be connected to the value of $\mathfrak{p}$. Indeed, if $\mathfrak{p}>\omega_1$, then a splitting chain could not have cuts which are $(\omega_1,\omega_1)$-gaps (as
they cannot be tight) and it is not obvious how to avoid them in the construction. However, we will show that splitting chains can exist even if $\mathfrak{p}>\omega_1$. The basic idea is to use iteration intertwining forcings destroying gaps from Definition \ref{P_G} with forcings adding pseudointersections to bases of filters on $\omega$.

\begin{defi}\label{Mathias-Prikry} Let $\mathscr{F}$ be a filter on $\omega$. The \emph{Mathias--Prikry forcing} $\mathbb{M}_\mathscr{F}$ is defined in the following way: $p\in \mathbb{M}_\mathscr{F}$ iff $p = (t_p, F_p)$, where $t_p\in
		2^{<\omega}$, $\mathrm{supp}(t_p) \cap F_p = \varnothing$ and $F_p\in \mathscr{F}$. $p\leq q$ if $t_q \subseteq t_p$, $F_p\subseteq F_q$ and $t_p(n)=0$ whenever $n\in (\mathrm{supp}(t_p)\setminus \mathrm{supp}(t_q))\setminus F_q$.
\end{defi}

Recall that $\mathbb{M}_\mathscr{F}$ \emph{diagonalizes} $\mathscr{F}$ (i.e. it adds a pseudo-intersection of $\mathscr{F}$; see e.g.
\cite{Mathias}).

Let $\kappa$ be a regular uncountable cardinal. For the rest of this section fix two subsets $\Gamma$, $\Lambda\subseteq \kappa$ which form a partition of $\kappa$ into cofinal subsets. In our construction, at steps from $\Gamma$ we will add sets interpolating cuts, and at steps from $\Lambda$ we will diagonalize
filters. Namely, we start with a model with $\mathsf{GCH}$ and perform a finite support iteration $(\mathbb{Q}_\alpha, \mathbb{P}_\alpha)_{\alpha<\kappa}$, where $\mathbb{Q}_0$ is the trivial forcing, $\mathbb{Q}_{\alpha+1} = \mathbb{Q}_\alpha \star \mathbb{P}_\alpha$ for every $\alpha<\kappa$.
Moreover, for each $\alpha<\kappa$ the forcing $\mathbb{P}_\alpha$ is either trivial or
\begin{itemize}
	\item for $\alpha \in \Gamma$ it is of the form $\mathbb{M}_{\dot{\mathscr{F}}}$, where $\dot{\mathscr{F}}$ is a $\mathbb{Q}_\alpha$-name for a filter generated by less than $\kappa$ sets. 
	\item for $\alpha \in \Lambda$ it is of the form $\mathbb{P}_{\dot {\mathscr{G}}}$, where $\dot{\mathscr{G}}$ is a $\mathbb{Q}_\alpha$-name for a cut in the chain $\{\dot{S}_\beta\colon \beta\in \Lambda\cap
		\alpha\} \cup \{\varnothing, \omega\}$, where $\dot{S}_\beta$ is the $\mathbb{Q}_\beta$-name for a subset of $\omega$ added generically by $\mathbb{Q}_\beta$.
\end{itemize}
Let $\mathbb{Q}_\kappa$ be the limit of the iteration.

Note that in this definition we \emph{a priori} assume that $\mathbb{Q}_\alpha$ forces $\{\dot{S}_\beta\colon \beta\in \Lambda\cap
\alpha\}$ to be a chain. That this is the case can be shown by induction using Lemma \ref{coh}: if $\alpha\in \Lambda\cap \kappa$, $\mathscr{S} = \{\dot{S}_\beta\colon \beta\in \Lambda\cap \alpha\}$ forms a chain, and $\dot{\mathscr{G}}$ is a
$\mathbb{Q}_\alpha$-name
for a cut in $\mathscr{S}$, then $\mathbb{P}_{\alpha} = \mathbb{P}_\mathscr{G}$ adds a set $\dot{S}_\alpha$ interpolating the cut and thus $\mathbb{Q}_\alpha \star \mathbb{P}_\mathscr{G}$ forces $\mathscr{S}\cup \{\dot{S}_\alpha\}$ to be a chain.

In what follows we will make a cosmetic change in the definition of $\mathbb{P}_\mathscr{G}$. At step $\alpha$ of the iteration, as $\mathscr{G}\subseteq \{\dot{S}_\beta\colon \beta<\alpha\} \cup \{\varnothing,\omega\}$ and thus elements of $\mathscr{G}$ are naturally indexed by elements of $\alpha$, the sets
$L_p$ and $R_p$, for $p\in \mathbb{P}_\alpha$, will be subsets of $[\alpha]^{<\omega}$ (instead of $[\mathscr{G}]^{<\omega}$). (To avoid problems with $\varnothing$ and $\omega$, we may assume that $0,1\in \Gamma$ and $S_0=\varnothing$, $S_1=\omega$.)

We will prove inductively that regardless of the choice of the names for the filters and cuts, the forcing $\mathbb{Q}_\alpha$ is ccc for every $\alpha\leq \kappa$. We will use arguments from \cite{Laver}. Although Laver's construction serves for different purposes and concerns a different structure, in fact we follow the path of his proof quite strictly. 

Notice that usually to prove that a finite support iteration is ccc, one uses the preservation theorem for finite support iterations of ccc forcings. This time, the fact that the iterands are ccc will be rather a conclusion of the fact that the whole
iteration is ccc. Further conclusion is that all the cuts in the generically added chain which form gaps are destructible
(see Fact \ref{char-destructibility}).

\begin{thm}\label{ccc} $\mathbb{Q}_\alpha$ is ccc for every $\alpha\leq \kappa$.
\end{thm}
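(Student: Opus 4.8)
The plan is to prove the statement by induction on $\alpha$, running at each step a single $\Delta$-system argument directly on $\mathbb{Q}_\alpha$, in the spirit of Laver's construction \cite{Laver}, that invokes the inductive hypothesis only for proper initial segments $\mathbb{Q}_\beta$ ($\beta<\alpha$). In particular I would not try to verify separately that each iterand $\mathbb{P}_\alpha$ is ccc in $V^{\mathbb{Q}_\alpha}$ and then appeal to the preservation theorem for finite support iterations; as indicated in the discussion preceding the theorem, ccc-ness of the iterands — and hence destructibility of every gap that appears as a cut in the generic chain — will drop out as a by-product. Throughout I use the already-noted fact that $\mathbb{Q}_\beta$ forces $\{\dot S_\gamma:\gamma\in\Lambda\cap\beta\}$ to be a $\subseteq^*$-chain, so that, a cut being a pre-gap, each of its left-hand members sits $\subsetneq^*$-below each of its right-hand members.

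Fix a family $\{p_\xi:\xi<\omega_1\}\subseteq\mathbb{Q}_\alpha$; it suffices to find two compatible members. After the usual reductions I may assume: each $p_\xi$ \emph{decides} all of its finite data, that is, every Mathias--Prikry stem at a coordinate in $\Gamma$, and every $\mathbb{P}_{\mathscr G}$-stem together with the two finite sets $L^\xi_\beta,R^\xi_\beta\subseteq\beta$ at a coordinate $\beta\in\Lambda$ (which, following the cosmetic change in the definition of $\mathbb{P}_{\mathscr G}$, we treat as concrete finite sets of ordinals); the supports $\mathrm{supp}(p_\xi)$ form a $\Delta$-system with root $r$ of common order type, and along $r$ all the $p_\xi$ share the same stems; and, applying the $\Delta$-system lemma once more for each of the finitely many $\beta\in r\cap\Lambda$, the finite sets $L^\xi_\beta\cup R^\xi_\beta$ form a $\Delta$-system with some root $m_\beta$. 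A final thinning (finitely many possibilities) makes the partition of $m_\beta$ into the $L$-part and the $R$-part of $p_\xi$ independent of $\xi$; from this, the disjointness of the $\Delta$-system petals, and the fact that $L_p\cap R_p=\varnothing$ inside any single $\mathbb{P}_{\mathscr G}$-condition, one checks that for any $\xi,\eta$ from the refined family the sets $L^\xi_\beta\cup L^\eta_\beta$ and $R^\xi_\beta\cup R^\eta_\beta$ are disjoint, i.e.\ no index is ``put on the left'' of the cut $\dot{\mathscr G}_\beta$ by one of the two conditions and ``on the right'' by the other.

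Next I would show that any two $p_\xi,p_\eta$ from the refined family have a common extension $p$, built by recursion along the finite set $\mathrm{supp}(p_\xi)\cup\mathrm{supp}(p_\eta)$. Off the root the supports are disjoint and $p$ just copies the relevant coordinate. On the root at $\beta\in\Gamma$ the stems agree, so I put $\dot F_\beta=\dot F^\xi_\beta\cap\dot F^\eta_\beta$, which $p{\restriction}\beta$ forces to lie in the filter and to be disjoint from the common stem; the $\Gamma$-coordinates are thus harmless, which just reflects that $\mathbb{M}_{\mathscr F}$ is $\sigma$-centered for every filter $\mathscr F$. On the root at $\beta\in\Lambda$ I take $L_\beta=L^\xi_\beta\cup L^\eta_\beta$ and $R_\beta=R^\xi_\beta\cup R^\eta_\beta$; by the previous paragraph these are disjoint, $p{\restriction}\beta$ forces $L_\beta\subseteq\dot{\mathscr L}_\beta$ and $R_\beta\subseteq\dot{\mathscr R}_\beta$, hence $p{\restriction}\beta$ forces $\dot S_\gamma\subsetneq^*\dot S_{\gamma'}$ for each of the finitely many pairs $\gamma\in L_\beta$, $\gamma'\in R_\beta$; strengthening $p{\restriction}\beta$ to decide a uniform finite bound $N$ witnessing all these almost-inclusions, I let the stem of $p(\beta)$ have length $N$, agree with the common stem $s^\beta$ below $|s^\beta|$, and choose the new bits on $[|s^\beta|,N)$ so as to comply with the ordering clauses of both $p_\xi(\beta)$ and $p_\eta(\beta)$ — possible because on that finite interval those clauses only constrain membership in the now decided finite sets $\dot S_\gamma$, whose chain relations make the two sets of requirements simultaneously satisfiable. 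That $p$ is a genuine condition of $\mathbb{Q}_\alpha$ below $p_\xi$ and $p_\eta$ is then a coordinatewise check.

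The step I expect to be the main obstacle is the treatment of the $\Lambda$-coordinates: arranging the normalization so that the amalgamated triple $(L_\beta,R_\beta,s)$ is an honest condition of $\mathbb{P}_{\mathscr G}$ — that is, controlling the overlaps between the $\Delta$-system petals of the $L$- and $R$-parts and keeping the ``side of the cut'' decisions of the various $p_\xi$ mutually consistent — and then checking that all the ordering clauses of $\mathbb{P}_{\mathscr G}$, which depend on the actual bits of the stem and not merely on its length, can be met for $p_\xi$ and $p_\eta$ at once after the stem has been lengthened. This is the combinatorial heart borrowed from \cite{Laver}, and it is precisely why the ccc-ness of the individual iterands, and with it the destructibility of every gap occurring as a cut in the generic chain, comes out as a consequence of the theorem rather than as an ingredient of its proof.
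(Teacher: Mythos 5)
Your overall strategy -- induction on $\alpha$, a single $\Delta$-system/amalgamation argument carried out directly on $\mathbb{Q}_\alpha$, with ccc-ness of the iterands (hence destructibility of the cuts) falling out as a by-product -- is exactly the paper's strategy. The divergence, and the gap, is precisely at the point you flag as the main obstacle: the $\Lambda$-coordinates of the root. For the amalgamated triple $(L^\xi_\beta\cup L^\eta_\beta,\,R^\xi_\beta\cup R^\eta_\beta,\,s)$ to be a \emph{condition} of $\mathbb{P}_{\dot{\mathscr G}}$, the part below $\beta$ must force the \emph{literal} inclusions $\dot S_\gamma\setminus|s|\subseteq\dot S_{\gamma'}$ for all $\gamma\in L^\xi_\beta\cup L^\eta_\beta$ and $\gamma'\in R^\xi_\beta\cup R^\eta_\beta$; for the cross pairs ($\gamma$ from $p_\xi$, $\gamma'$ from $p_\eta$) you only have $\subseteq^*$ with an uncontrolled bound. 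Your proposed fix -- decide a bound $N$ witnessing the almost-inclusions and lengthen the stem at $\beta$ to $N$, choosing the new bits -- does not work: on the interval between $|s^\beta|$ and $N$ the ordering clause relative to $p_\xi(\beta)$ forces any $n$ lying in a set indexed by $L^\xi_\beta$ to receive bit $1$, while the clause relative to $p_\eta(\beta)$ forces any $n$ outside a set indexed by $R^\eta_\beta$ to receive bit $0$. Since you have not uniformized stem lengths across coordinates, such an $n\geq|s^\beta|$ with $n\in\dot S_\gamma\setminus\dot S_{\gamma'}$ may already be \emph{decided} by the stems of $p_\xi$ at coordinate $\gamma$ and of $p_\eta$ at coordinate $\gamma'$; then no choice of $N$ or of the new bits removes the clash, the two conditions are genuinely incompatible, and your thinned family need not be linked. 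The "chain relations" you invoke only give almost-inclusion, which is exactly why the stem had to be lengthened in the first place, so they cannot make the two sets of requirements simultaneously satisfiable.

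This is what the paper's extra machinery is for, and it is the part your outline is missing. One first passes to the dense set $\mathbb{R}_\alpha$ of Definition \ref{uniform-conditions}, where \emph{all} stems at $\Lambda$-coordinates of the support have the same maximum $\ell_p$ and the supports are closed under the $L/R$-references; after thinning so that all conditions share the same $\ell$, nothing beyond the common stem length is decided by any stem, so the offending already-decided exceptions cannot occur. Then, instead of lengthening the stem at $\beta$, one strengthens the part below $\beta$ by inserting the missing cross-references into the $L$- and $R$-sets at the earlier coordinates (Lemma \ref{c10}): this converts the forced $\subseteq^*$ into forced literal inclusion beyond the common stem length while leaving every stem untouched, and the amalgamation of Lemma \ref{c11} then keeps $s_{p(\beta)}$ unchanged, so the ordering clauses at $\beta$ are vacuous. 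Without the uniform stem length, the closed-support requirement, and this promise-insertion device, the induction in your second and third paragraphs does not close.
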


To prove the theorem we will need several lemmas.

\begin{defi}\label{uniform-conditions}
		Let $\mathbb{R}_\alpha$ be the set of conditions $p$ in $\mathbb{Q}_\alpha$ satisfying the following properties: 
	\begin{enumerate}
		\item  $p_{|\beta}$ decides $(L_{p(\beta)}, R_{p(\beta)}, s_{p(\beta)})$ for $\beta \in \Lambda \cap \mathrm{supp}(p)$ and $p_{|\beta}$ decides $t_{p(\beta)}$ for $\beta \in \Gamma\cap \mathrm{supp}(p)$,
		\item for each $\beta, \gamma \in \Lambda \cap \mathrm{supp}(p)$ we have $\max(s_{p(\gamma)}) = \max(s_{p(\beta)})$ (in such case denote this maximum by $\ell_p$),
		\item\label{closed-support} for each $\beta \in \Lambda \cap \mathrm{supp}(p)$ we have $L_{p(\beta)}\cup R_{p(\beta)} \subseteq  \mathrm{supp}(p)$.
	\end{enumerate}
\end{defi}

The following simple lemma says that we may work only with conditions in $\mathbb{R}_\alpha$. 

\begin{lem} For each $\alpha$ the set $\mathbb{R}_\alpha$ is dense in $\mathbb{Q}_\alpha$.
\end{lem}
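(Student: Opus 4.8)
The plan is to show that given any condition $p\in\mathbb{Q}_\alpha$, one can find a condition $p'\in\mathbb{R}_\alpha$ with $p'\leq p$. The three defining conditions of $\mathbb{R}_\alpha$ are handled separately but essentially all by strengthening finitely often.

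First I would deal with requirement (1): the support $\mathrm{supp}(p)$ is finite, so I can enumerate it as $\beta_1<\beta_2<\cdots<\beta_k$ and, working upward, strengthen $p$ step by step. At stage $i$, the restriction $p_{|\beta_i}$ is a condition in $\mathbb{Q}_{\beta_i}$ which forces a value for the finite object $p(\beta_i)$ (either $(L_{p(\beta_i)},R_{p(\beta_i)},s_{p(\beta_i)})$ when $\beta_i\in\Lambda$, or at least $t_{p(\beta_i)}$ when $\beta_i\in\Gamma$); since these are names for elements of a countable (even finite-valued) set, I can pass below $p_{|\beta_i}$ to a condition deciding them, and then re-amalgamate with the tail of $p$. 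Because there are only finitely many coordinates, this terminates and produces $p_1\leq p$ in $\mathbb{Q}_\alpha$ satisfying (1). Here I would invoke the standard fact about finite support iterations that strengthening a restriction $p_{|\beta}$ and keeping $p(\beta),p(\beta+1),\ldots$ yields a condition below $p$.

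Next, for requirement (2), once all the $s_{p(\beta)}$ for $\beta\in\Lambda\cap\mathrm{supp}(p)$ have been decided (by stage (1)), they are honest finite binary strings; let $\ell$ be the maximum of their lengths. For each such $\beta$ I extend $s_{p(\beta)}$ to a string of length exactly $\ell$ by appending zeros — this is a legitimate strengthening in $\mathbb{P}_\mathscr{G}$ since padding the characteristic string with $0$'s (outside the relevant sets) respects the order in Definition~\ref{P_G} (the new indices are put neither into any $L$ nor forced out of any $R$ in a way that violates the constraints, as they are assigned value $0$). One has to check the side condition $\bigcup L_q\setminus|s_p|\subseteq F_p\setminus|s_p|\subseteq\bigcap R_q$ is preserved, which it is, since we only lengthen $s$ and add nothing to $F_p$. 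This gives $p_2\leq p_1$ satisfying (1) and (2). Finally for requirement (3), I enlarge the support: for $\beta\in\Lambda\cap\mathrm{supp}(p_2)$ the finite sets $L_{p_2(\beta)},R_{p_2(\beta)}\subseteq\alpha$ need to be inside $\mathrm{supp}(p)$, so I simply add the (finitely many) missing coordinates to the support, putting the trivial condition $\mathbf 1$ of $\mathbb{P}_\gamma$ at each new coordinate $\gamma$. This does not change the forcing behaviour, keeps (1) and (2), and closes up the support; iterating once more if the newly added $\Lambda$-coordinates carry nontrivial $L,R$ data — but they carry the trivial condition, so $L,R=\varnothing$ there and no further iteration is needed. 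The resulting $p'$ lies in $\mathbb{R}_\alpha$ and refines $p$.

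The main obstacle I anticipate is purely bookkeeping: making sure that the three strengthening operations do not interfere — e.g., that padding the $s_{p(\beta)}$ to a common length in step (2) does not destroy the ``decided'' status from step (1), and that enlarging the support in step (3) does not reopen the need to decide new coordinates. The latter is the only genuinely delicate point, but it is resolved by the observation that new coordinates are populated by trivial conditions, hence have empty $L,R$ and trivial $s$ or $t$, so conditions (1)–(3) hold at them automatically. Since each operation only touches finitely much data and the support stays finite throughout, the process halts and $\mathbb{R}_\alpha$ is dense.
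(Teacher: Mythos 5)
Your step (1) — deciding each coordinate by working upward through the finite support — is sound and is essentially what the paper's induction does. The trouble begins with steps (2) and (3), both of which contain genuine gaps.

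For step (2), padding $s_{p(\beta)}$ with zeros is not a legitimate strengthening in $\mathbb{P}_{\mathscr G}$. The ordering on $\mathbb{P}_{\mathscr G}$ demands that the \emph{new} bits of $s$ honour the promise encoded by $L_{p(\beta)}$ and $R_{p(\beta)}$: an index $n$ in the freshly covered range that lies in $\bigcup\{\dot S_\gamma : \gamma\in L_{p(\beta)}\}$ must be assigned $1$, not $0$. Whether $n$ lies in such an $\dot S_\gamma$ is a fact about the generic that has to be \emph{decided} by $p_{|\beta}$, and for $p_{|\beta}$ to decide it you need $\gamma\in\mathrm{supp}(p_{|\beta})$ with $s_{p(\gamma)}$ long enough. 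That is exactly why the paper's proof insists on $L_{p(\alpha)}\cup R_{p(\alpha)}\subseteq\mathrm{supp}(r)$ and $\ell_r\geq\max(s_{p(\alpha)})$ before extending $s_{p(\alpha)}$ — the extension is then dictated by what $r$ forces, and is in general nonzero. So your claim that appending zeros ``respects the order'' is where the argument breaks.

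For step (3), putting the trivial condition $\mathbf 1$ at a new coordinate $\gamma$ does not enlarge the support: by definition $\mathrm{supp}(p)$ consists of the coordinates at which $p$ is \emph{not} trivial, so this step is a no-op and requirement (3) remains unsatisfied. Nor is your fallback correct: a $\Lambda$-coordinate carrying the trivial condition has $s=\langle\rangle$, so requirement (2), which demands $\max(s_{p(\gamma)})=\ell_p$, fails there. To genuinely bring $\gamma$ into the support you must place a nontrivial condition, say $(\varnothing,\varnothing,s_\gamma)$ with $|s_\gamma|=\ell_p+1$; but then $s_\gamma$ must be chosen consistently with what $p_{|\gamma}$ forces about $\dot S_\gamma$, which forces you to strengthen $p_{|\gamma}$ — reopening exactly the interactions you hoped to avoid. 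The paper sidesteps this tangle by running the three requirements \emph{together} in a single induction on $\alpha$: first strengthen $p_{|\alpha}$ to decide $p(\alpha)$, absorb $L_{p(\alpha)}\cup R_{p(\alpha)}$ into the support, and push $\ell$ strictly past $\max(s_{p(\alpha)})$; then apply the inductive hypothesis below $\alpha$; and only then extend $s_{p(\alpha)}$, using the decided values, to reach the common length.
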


\begin{proof}
	We will prove it inductively on $\alpha$. Since the iteration is of finite support, the limits steps are obvious. Let $\alpha<\kappa$ and consider $p\in \mathbb{Q}_{\alpha+1}$. Denote $q=p_{|\alpha}$. Find $q'\leq  q$ so that 
	\begin{itemize}
		\item $q'$ decides $p(\alpha)$,
		\item $L_{q'(\alpha)} \cup R_{q'(\alpha)} \subseteq \mathrm{supp}(q')$,
		\item $\max(s_{q'(\beta)})>\max(s_{p(\alpha)})$ for some $\beta \in \mathrm{supp}(q')$.
	\end{itemize}
	Use the inductive hypothesis to find $r\leq q'$, $r\in \mathbb{R}_\alpha$. Now, notice that there is $s$ such that $\max(s)=\ell_p$ and $r^\frown(L_{p(\alpha)},R_{p(\alpha)},s) \leq
r^\frown(L_{p(\alpha)},R_{p(\alpha)},s_{p(\alpha)})$. Indeed, the existence of such $s$ follows from the fact that $\ell_p \geq \max(s_\alpha)$ and $L_{p(\alpha)}\cup R_{p(\alpha)} \subseteq \mathrm{supp}(r)$ (and so $s_\alpha$ can be appropriately extended). Clearly,
$r^\frown(L_{p(\alpha)}, R_{p(\alpha)}, s)$ is in $\mathbb{R}_\alpha$.
\end{proof}

\begin{lem}\label{c10} Let $p\in \mathbb{R}_\alpha$. If $\beta, \gamma \in \mathrm{supp}(p) \cap \Lambda$, $\beta<\gamma$ and $p \Vdash \dot{S}_\beta \subseteq^* \dot{S}_\gamma$ ($p \Vdash \dot{S}_\gamma \subseteq^* \dot{S}_\beta$), then there is $r$ such that
	\begin{itemize}
		\item $\mathrm{supp}(r)=\mathrm{supp}(p)$ and $s_{r(\delta)}=s_{p(\delta)}$ for each $\delta \in \mathrm{supp}(p)\cap \Lambda$, $t_{r(\delta)}=t_{p(\delta)}$ for $\delta\in \mathrm{supp}(p)\cap \Gamma$,
		\item $\beta\in L_{r(\gamma)}$ ($\beta\in R_{r(\gamma)}$).
	\end{itemize}
\end{lem}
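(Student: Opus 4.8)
The plan is to handle in detail the case $p\Vdash\dot S_\beta\subseteq^*\dot S_\gamma$; the parenthetical case $p\Vdash\dot S_\gamma\subseteq^*\dot S_\beta$ is symmetric, with the roles of the left and right coordinates at stage $\gamma$ (and of $\dot{\mathscr L}_\gamma,\dot{\mathscr R}_\gamma$) interchanged. I would begin with a harmless normalization: we may assume the name $\dot{\mathscr G}_\gamma=(\dot{\mathscr L}_\gamma,\dot{\mathscr R}_\gamma)$ is a \emph{complete} cut, i.e.\ $\mathbb Q_\gamma$ forces every $\dot S_\delta$ with $\delta\in\Lambda\cap\gamma$ to belong to $\dot{\mathscr L}_\gamma\cup\dot{\mathscr R}_\gamma$; replacing the cut by the pair consisting of those chain members that are $\subseteq^*$ below some element of $\dot{\mathscr L}_\gamma$, together with the rest, changes neither the family of interpolants nor, up to a cofinal/coinitial re-indexing, the poset $\mathbb P_{\dot{\mathscr G}_\gamma}$ of Definition~\ref{P_G}.

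The heart of the matter is the following claim, which I would prove first: $p_{|\gamma}\Vdash\dot S_\beta\in\dot{\mathscr L}_\gamma$. Note that $\dot S_\gamma$ interpolates $\dot{\mathscr G}_\gamma$ (Lemma~\ref{coh}) and that, by the very definition of a cut in a chain, no member of $\{\dot S_\delta:\delta\in\Lambda\cap\gamma\}\cup\{\varnothing,\omega\}$ interpolates $\dot{\mathscr G}_\gamma$; hence $\Vdash\dot S_\gamma\ne^*\dot S_\delta$ for every $\delta\in\Lambda\cap\gamma$, and in particular $p\Vdash\dot S_\beta\subsetneq^*\dot S_\gamma$. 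Now suppose for contradiction that some $q\le p_{|\gamma}$ forces $\dot S_\beta\notin\dot{\mathscr L}_\gamma$; by completeness $q\Vdash\dot S_\beta\in\dot{\mathscr R}_\gamma$. Appending to $q$ the coordinates $p(\delta)$, $\delta\in\mathrm{supp}(p)$ and $\delta\ge\gamma$, produces a condition $q^+\le p$ in $\mathbb Q_\alpha$ (at each such stage $\delta$, $p(\delta)$ is still forced to be a condition, now by $q^+_{|\delta}\le p_{|\delta}$). Then $q^+$ forces $\dot S_\beta\subsetneq^*\dot S_\gamma$ since $q^+\le p$, yet it also forces $\dot S_\beta\in\dot{\mathscr R}_\gamma$ since $q^+$ extends $q$, and therefore (interpolation) $\dot S_\gamma\subseteq^*\dot S_\beta$ --- absurd. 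This proves the claim.

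Granting the claim, I would just write $r$ down. Set $r_{|\gamma}=p_{|\gamma}$; let $r(\gamma)=(L_{p(\gamma)}\cup\{\beta\},\,\varnothing,\,s_{p(\gamma)})$; and for $\delta\in\mathrm{supp}(p)$ with $\delta>\gamma$ keep only the stem of $p(\delta)$: put $r(\delta)=(\varnothing,\varnothing,s_{p(\delta)})$ when $\delta\in\Lambda$, and $r(\delta)=(t_{p(\delta)},\,\omega\setminus\mathrm{supp}(t_{p(\delta)}))$ when $\delta\in\Gamma$. Each entry is a genuine member of the relevant iterand: at stage $\gamma$ the requirement $\dot S_\eta\setminus|s_{p(\gamma)}|\subseteq\dot S_\zeta$ of Definition~\ref{P_G} (for $\eta$ in the left, $\zeta$ in the right coordinate) is vacuous because the right coordinate is empty, while $\beta$ may legitimately be placed on the left by the claim and $L_{p(\gamma)}$ was already legitimate since $p$ is a condition; at a stage $\delta\in\Lambda$ above $\gamma$ the triple $(\varnothing,\varnothing,s)$ is a condition of $\mathbb P_{\dot{\mathscr G}_\delta}$ whatever the cut $\dot{\mathscr G}_\delta$ happens to be; and at a stage $\delta\in\Gamma$ the set $\omega\setminus\mathrm{supp}(t_{p(\delta)})$ is cofinite, hence a member of the (free) filter $\dot{\mathscr F}_\delta$. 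By construction $\mathrm{supp}(r)=\mathrm{supp}(p)$, the stems are untouched, $\beta\in L_{r(\gamma)}$, and $r\in\mathbb R_\alpha$: clauses (1)--(3) of Definition~\ref{uniform-conditions} are inherited from $p$, using $\beta\in\mathrm{supp}(p)$ for clause (3) at stage $\gamma$. (One may also check that $r$ is compatible with $p$, though this is not asked for.)

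The hard part is the claim, and specifically the realization that a \emph{forced} inclusion $\dot S_\beta\subseteq^*\dot S_\gamma$ already pins $\dot S_\beta$ to the left half of the cut destroyed at stage $\gamma$; this is exactly where one uses that $\dot S_\gamma$ is an interpolant of that cut and that a cut in a chain admits no interpolant within the chain. A secondary, purely bookkeeping point --- the reason $r$ is \emph{not} taken to be an extension of $p$ --- is that one cannot in general enlarge $L_{p(\gamma)}$ by $\beta$ while holding the stem $s_{p(\gamma)}$ fixed: for $\delta'\in R_{p(\gamma)}$ only $\dot S_\beta\subseteq^*\dot S_{\delta'}$ is forced, not the stronger $\dot S_\beta\setminus|s_{p(\gamma)}|\subseteq\dot S_{\delta'}$ demanded by Definition~\ref{P_G}. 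Emptying the right coordinate at $\gamma$ and resetting the coordinates above $\gamma$ dodges this at no cost, since the conclusion of the lemma constrains only supports and stems.
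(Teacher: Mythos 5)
Your proof is correct for the lemma as literally stated, but it takes a genuinely different route from the paper's, and the difference matters downstream. The paper argues by induction on $\alpha$: at the successor step $\alpha=\gamma$ it applies the inductive hypothesis to $p_{|\gamma}$ once for each $\delta\in R_{p(\gamma)}$, ``repairing'' the condition below $\gamma$ so that $q\Vdash \dot S_\beta\setminus|s_{p(\gamma)}|\subseteq\dot S_\delta$ for every $\delta\in R_{p(\gamma)}$; this makes $(L_{p(\gamma)}\cup\{\beta\},\,R_{p(\gamma)},\,s_{p(\gamma)})$ a legitimate last coordinate without touching $R_{p(\gamma)}$, and the resulting $r$ extends $p$. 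You avoid the induction entirely with the Claim that $p_{|\gamma}\Vdash\dot S_\beta\in\dot{\mathscr L}_\gamma$, which is a nice and correct observation (it isolates exactly why membership on the left is even well-defined, a point the paper glosses over), and then you build $r$ by emptying $R_{r(\gamma)}$ and resetting the coordinates above $\gamma$.

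The catch is precisely the one you flag as ``bookkeeping.'' Your $r$ is \emph{not} an extension of $p$, since $R_{r(\gamma)}=\varnothing\not\supseteq R_{p(\gamma)}$ and you discard the filter parts and left/right coordinates at stages above $\gamma$. The printed conclusion of Lemma~\ref{c10} indeed does not mention $r\leq p$, and you prove exactly what is printed. But look at how the lemma is consumed: in the proof of Lemma~\ref{c11} it is invoked to produce $r''\leq r'$ with the stated support/stem/membership properties, so the intended (and needed) conclusion is the stronger $r\leq p$. The paper's induction delivers this for free --- $q\leq p_{|\alpha}$ and $r(\alpha)\leq p(\alpha)$ since $L$ grows, $R$ stays, and the stem is unchanged --- whereas your construction cannot, because you must empty $R_{r(\gamma)}$ exactly to sidestep the constraint $\dot S_\beta\setminus|s_{p(\gamma)}|\subseteq\dot S_\delta$ for $\delta\in R_{p(\gamma)}$. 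In other words, the inductive ``repair'' of lower coordinates is not a detour: it is how one enlarges $L_{p(\gamma)}$ by $\beta$ while leaving $R_{p(\gamma)}$ intact, and that is what Lemma~\ref{c11} requires. So: valid proof of the statement as written, but not a drop-in replacement for the paper's argument; to salvage it you would have to show the compatibility of $r$ with $p$ directly and work with a common extension instead of $r\leq p$, which would change the bookkeeping in Lemma~\ref{c11}.
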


	\begin{proof} Assume that $p\Vdash \dot{S}_\beta \subseteq^* \dot{S}_\gamma$ (the other case is clearly symmetric). We will prove the lemma by induction on $\alpha$. The limit step is obvious so assume that $\alpha<\kappa$ and consider
		$\alpha+1$'s step. We may assume that $\gamma=\alpha$. 

		Notice that $p_{|\alpha}\Vdash \dot{S}_\beta \subseteq^* \dot{S}_\alpha$. Hence, for each $\delta\in R_{p(\alpha)}$ we have $p_{|\alpha} \Vdash \dot{S}_\beta \subseteq^* \dot{S}_\delta$. By inductive hypothesis used $R_{p(\alpha)}$ many times
		we may find $q \leq  p_{|\alpha}$ as in the lemma, such that $\beta \in L_{q(\delta)}$ or $\delta\in L_{q(\beta)}$ for every $\delta\in R_{p(\alpha)}$. Finally, let
		\[ r = q^\frown (L_{p(\alpha)}\cup \{\beta\}, R_{p(\alpha)}, s_{p(\alpha)}).\vspace{-20pt} \]
\end{proof}

\begin{lem}\label{c11} Let $p,q\in \mathbb{R}_\alpha$ be such that $\ell_p = \ell_q$ and $s_{p(\beta)} = s_{q(\beta)}$ for each $\beta \in \Lambda \cap \mathrm{supp}(p)\cap \mathrm{supp}(q)$ and $t_{p(\beta)} = t_{q(\beta)}$ for $\beta\in \Gamma \cap \mathrm{supp}(p)\cap \mathrm{supp}(q)$. Then there is $r\leq p,q$.
\end{lem}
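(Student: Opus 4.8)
The plan is to prove the lemma by induction on $\alpha$, establishing the slightly stronger statement that $r$ can be chosen with $\mathrm{supp}(r)=\mathrm{supp}(p)\cup\mathrm{supp}(q)$ and, at every $\delta\in\mathrm{supp}(r)$, with $s_{r(\delta)}\in\{s_{p(\delta)},s_{q(\delta)}\}$ and $t_{r(\delta)}\in\{t_{p(\delta)},t_{q(\delta)}\}$ (in particular $\ell_r=\ell_p$). Carrying this extra data is exactly what keeps the inductive step inside $\mathbb{R}_\alpha$. The limit case is immediate: the iteration has finite support, so $\mathrm{supp}(p)\cup\mathrm{supp}(q)$ is bounded by some $\beta<\alpha$ and $p,q$ may be regarded as conditions in $\mathbb{R}_\beta$.

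For the successor step $\alpha=\gamma+1$ I would first apply the inductive hypothesis to $p_{|\gamma},q_{|\gamma}$ (which lie in $\mathbb{R}_\gamma$ and inherit the hypotheses of the lemma) to obtain $r'\le p_{|\gamma},q_{|\gamma}$ in $\mathbb{R}_\gamma$ with $\mathrm{supp}(r')=\mathrm{supp}(p_{|\gamma})\cup\mathrm{supp}(q_{|\gamma})$, with stems and trunks of $r'$ taken from those of $p_{|\gamma}$ and $q_{|\gamma}$, and with $\ell_{r'}=\ell_p$. If $\gamma\notin\mathrm{supp}(p)\cap\mathrm{supp}(q)$, say $\gamma\notin\mathrm{supp}(q)$, one sets $r=r'{}^\frown p(\gamma)$: this is a legitimate condition because $p_{|\gamma}$, hence $r'$, decides $p(\gamma)$; then $r\le p$ is clear, $r\le q$ holds since $q(\gamma)$ is trivial, and $r\in\mathbb{R}_{\gamma+1}$ follows from $L_{p(\gamma)}\cup R_{p(\gamma)}\subseteq\mathrm{supp}(p)$ together with $\ell_{r'}=\ell_p=\max s_{p(\gamma)}$. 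If instead $\gamma\in\mathrm{supp}(p)\cap\mathrm{supp}(q)\cap\Gamma$, then $t_{p(\gamma)}=t_{q(\gamma)}=:t$ by hypothesis; since $r'$ forces both $F_{p(\gamma)}$ and $F_{q(\gamma)}$ into $\dot{\mathscr F}_\gamma$, it forces $F_{p(\gamma)}\cap F_{q(\gamma)}$ into $\dot{\mathscr F}_\gamma$, and it forces this set disjoint from $\mathrm{supp}(t)$, so $r=r'{}^\frown(t,\,F_{p(\gamma)}\cap F_{q(\gamma)})$ does the job, and the verification that $r\le p,q$ and $r\in\mathbb{R}_{\gamma+1}$ is routine.

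The case $\gamma\in\mathrm{supp}(p)\cap\mathrm{supp}(q)\cap\Lambda$ is the heart of the argument and the only place Lemma~\ref{c10} enters; \emph{this is the main obstacle}. Here $s_{p(\gamma)}=s_{q(\gamma)}=:s$ by hypothesis, with $|s|=\ell_p+1$, and the goal is to take $r=r''{}^\frown(L_{p(\gamma)}\cup L_{q(\gamma)},\,R_{p(\gamma)}\cup R_{q(\gamma)},\,s)$ for a suitable $r''\le r'$. Since $r'\le p_{|\gamma},q_{|\gamma}$, it forces each $\dot S_\delta$ with $\delta\in L_{p(\gamma)}\cup L_{q(\gamma)}$ into the left side, and each $\dot S_\varepsilon$ with $\varepsilon\in R_{p(\gamma)}\cup R_{q(\gamma)}$ into the right side, of the cut at stage $\gamma$; and for pairs that both come from $p(\gamma)$, or both from $q(\gamma)$, the required inclusion $\dot S_\delta\setminus|s|\subseteq\dot S_\varepsilon$ is already forced by $p_{|\gamma}$ or $q_{|\gamma}$. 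The one thing that can fail is, for a \emph{cross} pair $\delta,\varepsilon$ coming from different conditions, that $r'$ only knows $\dot S_\delta\subsetneq^*\dot S_\varepsilon$ (the two sitting on opposite sides of the same pre-gap), not the finitary inclusion past the stem. This is fixed exactly as in the proof of Lemma~\ref{c10}: for each of the finitely many cross pairs $(\delta,\varepsilon)$ one applies Lemma~\ref{c10} to $r'$ to record the relation $\dot S_\delta\subseteq^*\dot S_\varepsilon$ at the coordinate $\max(\delta,\varepsilon)$, putting $\delta$ into its left set if $\delta<\varepsilon$ and $\varepsilon$ into its right set if $\varepsilon<\delta$. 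Each application of Lemma~\ref{c10} leaves the support and all stems and trunks unchanged (so we stay in $\mathbb{R}_\gamma$ with $\ell$ untouched) and only enlarges $L$- and $R$-sets, so the resulting $r''$ still satisfies $r''\le r'\le p_{|\gamma},q_{|\gamma}$. Because every $\Lambda$-stem of $r''$ now has length $\ell_p+1=|s|$, having $\delta$ in the left set of $\max(\delta,\varepsilon)$ forces $\dot S_\delta\setminus|s|\subseteq\dot S_{\max(\delta,\varepsilon)}$, while having $\varepsilon$ in its right set forces $\dot S_{\max(\delta,\varepsilon)}\setminus|s|\subseteq\dot S_\varepsilon$; either way $r''\Vdash\dot S_\delta\setminus|s|\subseteq\dot S_\varepsilon$, which is precisely the missing requirement for a condition. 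Hence $r=r''{}^\frown(L_{p(\gamma)}\cup L_{q(\gamma)},\,R_{p(\gamma)}\cup R_{q(\gamma)},\,s)$ is a genuine element of $\mathbb{Q}_{\gamma+1}$; it lies below $p$ and $q$ because its top coordinate only enlarges the $L$- and $R$-sets without changing the stem, and it lies in $\mathbb{R}_{\gamma+1}$ since $\max s=\ell_p=\ell_{r''}$ and $L_{p(\gamma)}\cup R_{p(\gamma)}\cup L_{q(\gamma)}\cup R_{q(\gamma)}\subseteq\mathrm{supp}(p)\cup\mathrm{supp}(q)=\mathrm{supp}(r)$. This closes the induction, the only real difficulty being the single $\Lambda$-step above and everything else amounting to bookkeeping.
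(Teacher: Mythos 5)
Your proof is correct and takes essentially the same route as the paper: induction with the strengthened hypothesis $\ell_r=\ell_p$, a trivial limit step via finite support, and a case split at the successor with Lemma~\ref{c10} doing the work when the top coordinate lies in $\Lambda\cap\mathrm{supp}(p)\cap\mathrm{supp}(q)$. Your explanation of \emph{why} Lemma~\ref{c10} is invoked there — to upgrade the cross-pair relations $\dot S_\delta\subseteq^*\dot S_\varepsilon$ to the literal inclusions $\dot S_\delta\setminus|s|\subseteq\dot S_\varepsilon$ so that the top-coordinate extension is a genuine condition below both $p$ and $q$ — is in fact a bit more explicit than the paper's terse wording, but it is the same argument.
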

\begin{proof}
	As before, we will prove it inductively on $\alpha$. In fact, to make induction work, we will prove a stronger statement: we show that under the above conditions there is $r\leq p,q$ such that $r\in \mathbb{R}_\alpha$ and $\ell_r = \ell_p$.

Again, the limit step is clear, so let $\alpha<\kappa$ and consider $p,q\in \mathbb{Q}_{\alpha+1}$. Define $r'$ in the following way:
	\begin{itemize}
		\item if $\mathrm{supp}(p_{|\alpha})$ is empty, then let $r'=q_{|\alpha}$, 
		\item if $\mathrm{supp}(q_{|\alpha})$ is empty, then let $r'=p_{|\alpha}$, 
		\item if both of the supports are non-empty, then let $r'$ be given by the inductive hypothesis for $p_{|\alpha}$ and $q_{|\alpha}$.
	\end{itemize}
	We may assume that $\alpha \in \mathrm{supp}(p)$, otherwise $r = r'^\frown q(\alpha)$ will be as desired. Similarly, we assume that $\alpha\in \mathrm{supp}(q)$. 

	Suppose first that $\alpha\in \Gamma$. Then it is enough to take 
	\[ r = r'^\frown (t_{p(\alpha)}, F_{p(\alpha)}\cap F_{q(\alpha)}). \]

 If $\alpha\in \Lambda$, then notice that
 \[ r'\Vdash \dot{S}_\beta \subseteq^* \dot{S}_\gamma\mbox{ for each }\beta\in L_{p(\alpha)}\cup L_{q(\alpha)}, \gamma \in R_{p(\alpha)}\cup R_{q(\alpha)}. \]
 Indeed, if $\beta\in L_{p(\alpha)}$ ($\beta\in L_{q(\alpha)}$) and $\gamma\in R_{p(\alpha)}$ ($\gamma \in R_{q(\alpha)}$), then $\beta,\gamma \in \mathrm{supp}(p_{|\alpha})$ ($\beta,\gamma\in \mathrm{supp}(q_{|\alpha})$) and so $p_{|\alpha}$ ($q_{|\alpha}$) forces that $\dot{S}_\beta \subseteq^* \dot{S}_\gamma$.

 Notice that extending $r'$ in the obvious way on $\alpha$ would give us a condition stronger than $p$ and $q$ but not necessarily in $\mathbb{R}_\alpha$. To fulfil condition (\ref{closed-support}) of Definition \ref{uniform-conditions} we have to  apply subsequently Lemma \ref{c10} to find $r''\leq r'$ such that 
 \begin{itemize}
	 \item for every $\beta\in L_{p(\alpha)}\cup L_{q(\alpha)}$ and every $\gamma \in R_{p(\alpha)}\cup R_{q(\alpha)}$ we have $\beta\in L_{r(\gamma)}$ or $\gamma\in R_{r(\beta)}$.
	 \item $\mathrm{supp}(r'') = \mathrm{supp}(r')$ and $s_{r''(\beta)} = s_{r'(\beta)}$ for each $\beta \in \mathrm{supp}(r')$.
 \end{itemize}
 Take $r = r''^\frown (L_{p(\alpha)}\cup L_{q(\alpha)}, R_{p(\alpha)}\cup R_{q(\alpha)}, s_{p(\alpha)})$.
\end{proof}

Now, we are ready to prove Theorem \ref{ccc}. 

\begin{proof}(of Theorem \ref{ccc}) Suppose that $\mathscr{P}$ is an uncountable subset of $\mathbb{Q}_\alpha$. We may assume that $\mathscr{P}\subseteq \mathbb{R}_\alpha$, that $\ell_p = \ell_q$ for each $p,q\in \mathscr{P}$ and, finally, that the
	supports of elements in $\mathscr{P}$ form a $\Delta$-system with a root $R$. Again, shrinking $\mathscr{P}$ if needed, we may assume that for each $p,q\in \mathscr{P}$ we have $s_{p(\alpha)} = s_{q(\alpha)}$ for $\alpha \in R\cap \Lambda$ and
	$t_{p(\alpha)} = t_{q(\alpha)}$ for $\alpha \in R\cap \Gamma$. Now, use Lemma \ref{c11}. 
\end{proof}

%Notice that we have not used directly in the above proof that $r$ guaranteed by Lemma \ref{c11} is in $\mathbb{R}_\alpha$. That 

\begin{thm} The existence of a splitting chain is consistent with $\mathfrak{p}=\kappa$ for each regular uncountable $\kappa$.
\end{thm}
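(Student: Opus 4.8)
The plan is to take the finite support iteration $\mathbb{Q}_\kappa$ described just above, started over a model of $\mathsf{GCH}$, and to pin down the bookkeeping. Since $\mathsf{GCH}$ gives $\kappa^{<\kappa}=\kappa$ and, by Theorem~\ref{ccc}, $\mathbb{Q}_\kappa$ is ccc of size $\le\kappa$, every $\mathbb{Q}_\kappa$-name for a subset of $\omega$ has support bounded below $\kappa$ (ccc together with $\mathrm{cf}(\kappa)>\omega$), hence is equivalent to a $\mathbb{Q}_\alpha$-name for some $\alpha<\kappa$, and there are only $\kappa$ such names up to equivalence. So I can arrange: \emph{(i)} at the steps $\alpha\in\Lambda$, the name $\dot{\mathscr G}$ runs, cofinally often, over all ``cuts determined by a real'', meaning that for each $\xi<\kappa$ and each $\mathbb{Q}_\xi$-name $\dot A$ for an infinite subset of $\omega$ there are cofinally many $\beta\in\Lambda$ with $\beta\ge\xi$ at which $\dot{\mathscr G}$ is the $\mathbb{Q}_\beta$-name for the pair $\bigl(\{S_\gamma:\gamma\in\Lambda\cap\beta,\ S_\gamma\cap \dot A=^*\varnothing\},\ \{S_\gamma:\gamma\in\Lambda\cap\beta,\ \dot A\subseteq^* S_\gamma\}\bigr)$; and \emph{(ii)} at the steps $\alpha\in\Gamma$, the name $\dot{\mathscr F}$ runs, cofinally often, over all $\mathbb{Q}_\alpha$-names for filters on $\omega$ having a base of size $<\kappa$. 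By Theorem~\ref{ccc}, $\mathbb{Q}_\kappa$ is ccc, so it preserves cardinals and cofinalities; in particular $\kappa$ is still regular and uncountable in $V^{\mathbb{Q}_\kappa}$, and counting nice names gives $\mathfrak c\le|\mathbb{Q}_\kappa|^{\aleph_0}\le\kappa^{\aleph_0}=\kappa$.

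Next I would check that $\mathscr C=\{S_\beta:\beta\in\Lambda\}\cup\{\varnothing,\omega\}$ is a splitting chain in $V^{\mathbb{Q}_\kappa}$. That it is a $\subseteq^*$-chain is the observation made right after the definition of the iteration (induction on the length, using Lemma~\ref{coh}). For splitting, by Proposition~\ref{splitting-tight} it is enough to show that every cut in $\mathscr C$ is tight. Assume not: some infinite $A\in V^{\mathbb{Q}_\kappa}$ is split by no member of $\mathscr C$. By ccc and regularity of $\kappa$, $A\in V^{\mathbb{Q}_\xi}$ for some $\xi<\kappa$. For every $\beta\in\Lambda$ with $\beta\ge\xi$, each $S_\gamma$ with $\gamma\in\Lambda\cap\beta$ is almost disjoint from $A$ or almost contains $A$, since it does not split $A$; hence the pair in~\emph{(i)} is genuinely a cut in $\{S_\gamma:\gamma\in\Lambda\cap\beta\}\cup\{\varnothing,\omega\}$, and $A$ spreads it. Pick $\beta\ge\xi$ at which the bookkeeping makes $\dot{\mathscr G}$ a name for exactly this cut; then $\mathbb{P}_\beta=\mathbb{P}_{\dot{\mathscr G}}$, and by Lemma~\ref{coh} the generic interpolant $S_\beta$ splits $A$, contradicting that $A$ is not split by $\mathscr C\ni S_\beta$.

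Then I would verify $\mathfrak p\ge\kappa$. Let $\mathscr A\in V^{\mathbb{Q}_\kappa}$ be a centred family of size $\mu<\kappa$ and let $\mathscr F$ be the filter it generates, which has a base of size $\le\mu$. Enumerating a base of $\mathscr F$ and replacing each entry by a nice name, one gets a $\mathbb{Q}_\kappa$-name for that base whose support is a union of $\le\mu$ countable sets, hence of size $<\kappa$; so this is equivalent to a $\mathbb{Q}_\alpha$-name $\dot{\mathscr F}$ for some $\alpha<\kappa$. At some step $\beta\in\Gamma$ with $\beta\ge\alpha$ the bookkeeping sets $\mathbb{P}_\beta=\mathbb{M}_{\dot{\mathscr F}}$, which diagonalizes $\mathscr F$ and thus adds a pseudo-intersection of $\mathscr F$, a fortiori of $\mathscr A$; this set persists to $V^{\mathbb{Q}_\kappa}$. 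Hence every centred family of size $<\kappa$ has a pseudo-intersection, i.e.\ $\mathfrak p\ge\kappa$. Together with $\mathfrak p\le\mathfrak c\le\kappa$ from the first paragraph, this yields $\mathfrak p=\kappa$ in $V^{\mathbb{Q}_\kappa}$, and with the previous paragraph the proof is complete.

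The main work has already been carried out, namely Theorem~\ref{ccc}: the Laver-style verification that the iteration intertwining the gap-destroying forcings $\mathbb{P}_{\dot{\mathscr G}}$ with the Mathias--Prikry forcings $\mathbb{M}_{\dot{\mathscr F}}$ is ccc \emph{no matter which} names for cuts and filters are plugged in; this is the delicate point, since the gap-destroying iterands need not be ccc in isolation. What remains is a routine catch-your-tail argument, and the only thing that needs a moment's care is the remark used above, that a set $A$ split by no member of the whole chain $\mathscr C$ is, in the strong $\subseteq^*$ sense, split by no member of any initial segment either, so that at the stage where we ``catch'' $A$ the pair it determines is really a cut there and Lemma~\ref{coh} applies.
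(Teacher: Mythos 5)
Your proposal is correct and follows the same route as the paper: Theorem~\ref{ccc} carries the weight, and what remains is a bookkeeping ``catch-your-tail'' argument where names for infinite sets are fed to $\mathbb{P}_{\dot{\mathscr G}}$ at $\Lambda$-steps and names for small filter bases are fed to $\mathbb{M}_{\dot{\mathscr F}}$ at $\Gamma$-steps, with Lemma~\ref{coh} guaranteeing that the generic interpolant actually splits the set that spread the cut. The one genuine value you add over the paper's terse presentation is that you explicitly close the $\mathfrak p\le\kappa$ direction via $\mathfrak c\le\kappa^{\aleph_0}=\kappa$; the paper only argues $\mathfrak p\ge\kappa$ through the filter-diagonalization and leaves the upper bound implicit.
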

\begin{proof}
	Assume that $\dot{A}$ is a $\mathbb{Q}_\alpha$-name for an infinite subset of $\omega$. If it is not split by the chain $\{\dot{S}_\beta\colon \beta<\alpha\}$, then let $L = \{\beta\colon \dot{S}_\beta \cap \dot{R} =^* \varnothing\}$ and $R =
	\{\beta\colon \dot{R}\subseteq^* \dot{S}_\beta\}$. Let $\mathscr{G} = (\{\dot{S}_\beta\colon \beta\in L\}, \{\dot{S}_\beta\colon \beta\in R\})$. By Theorem \ref{ccc} the forcing $\mathbb{P}_\mathscr{G}$ is ccc and it adds a generic set
	$\dot{S}\subseteq \omega$ interpolating $\mathscr{G}$. Moreover, by Lemma \ref{coh}, $\dot{S}$ splits $\dot{A}$.

	Using this remark, we can apply the standard bookkeeping machinery over all the $\mathbb{Q}_\alpha$-names, $\alpha\in \Lambda$, for infinite subsets of $\omega$ to ensure that all of them will appear as a subset defining a cut as above which will be used to define
	$\mathbb{P}_\alpha$ for some $\alpha\in \Lambda$ (if at step $\alpha$ the subset is already split by the previously constructed chain, then let $\mathbb{P}_\alpha$ be the trivial forcing). 
	
Simultaneously, we apply the bookkeeping over all the $\mathbb{Q}_{\alpha}$-names, $\alpha\in \Gamma$, for filters generated by less than $\kappa$ sets to ensure that all of them will be diagonalized in the process of iteration.
\end{proof}

\bibliographystyle{alpha}
\bibliography{snall}

\end{document}